%% file: PhaseTransitionPerc.tex
\documentclass[leqno, 11pt, a4paper]{amsart}

\usepackage{
a4wide, 
amssymb,
bbm,
centernot,
mathtools,
xcolor
} 

\usepackage{todonotes}
\usepackage{bm} 
\usepackage{graphicx}
\graphicspath{{./images/}}
\usepackage[colorlinks=true, linkcolor=blue,citecolor=blue]{hyperref}
\usepackage{float}
\usepackage{enumitem} 
\setlist[enumerate]{leftmargin=*}
\usepackage{empheq} 
\newtheorem{theorem}{Theorem}[section]
\newtheorem{lemma}[theorem]{Lemma}

\newtheorem{corollary}[theorem]{Corollary}
\newtheorem{proposition}[theorem]{Proposition}

\theoremstyle{definition}
\newtheorem{definition}[theorem]{Definition}

\newtheorem{assumption}[theorem]{Assumption}

\theoremstyle{remark}
\newtheorem{remark}[theorem]{Remark}

\usepackage{xspace} 
\newcommand{\EQ}{\mathbb{E}_{\mathbb{Q}_p}}
\newcommand{\range}{\mathrm{range}}

\numberwithin{equation}{section}

\input{commands.tex}

\begin{document}

\definecolor{airforceblue}{RGB}{204, 0, 102}
\newenvironment{draft}
{\par\medskip
\color{airforceblue}%
\medskip}

\title[Percolation phase transition on supercritical clusters]{Phase transition for strongly correlated percolation models on supercritical Bernoulli clusters}


\author{Alberto Chiarini}
\address{Universit\`a degli Studi di Padova}
\curraddr{Department of Mathematics ``Tullio Levi-Civita'', via Trieste 63, 35121, Padova}
\email{chiarini@math.unipd.it}
\thanks{}


\author{Zhizhou Liu}
\address{Department of Mathematics, The Hong Kong University of Science and Technology}
\curraddr{Clear Water Bay, Kowloon, Hong Kong}
\email{zliugm@connect.ust.hk}
\thanks{}


\author{Maximilian Nitzschner}
\address{Department of Mathematics, The Hong Kong University of Science and Technology}
\curraddr{Clear Water Bay, Kowloon, Hong Kong}
\email{mnitzschner@ust.hk}
\thanks{}

\begin{abstract}
     We consider the level sets of the Gaussian free field and the vacant set of random interlacements, both defined on a typical realization of the infinite cluster of supercritical Bernoulli bond percolation on $\bbZ^d$, $d \geq 3$. We prove that in the entire supercritical regime of Bernoulli bond percolation, both the level sets of the Gaussian free field and the vacant set of random interlacements undergo non-trivial percolation phase transitions at deterministic critical levels. A key aspect of the proof is the development of certain quenched controls over tree embeddings, permitting the application of a static renormalization scheme in the presence of spatial irregularities, which may be of independent interest.  
\end{abstract}

\subjclass[2010]{}
\keywords{}
\dedicatory{}
\maketitle
\tableofcontents

\section{Introduction}
\label{sec:introduction}

The principal aim of this article is to investigate how \textit{degenerate, non-elliptic} disorder affects the large-scale properties of correlated random fields on the integer lattice.
The effect of disorder on correlated random fields has received substantial attention recently, with homogenization-type results for Gaussian fields (see \cite{andres2025scaling,
CR24homogenization}), extremes of the Gaussian free field (see \cite{chiarini2025hardwall,
schweiger2024maximum}), and the (non-)uniqueness of Gibbs measures (see~\cite{BS11gradientfields,
buchholz2024disordered,MR2985173,
CC15uniquenessdisorder}) as prominent cases in point. \medskip

To further probe the effect of disorder on correlated fields, we consider two percolation models with long-range correlations, the level sets of the Gaussian free field (GFF) and the vacant set of random interlacements. Both models are known to undergo a percolation phase transition on the full lattice $\mathbb{Z}^d$, $d \geq 3$ (see \cite{bricmont1987percolation,
rodriguez2013phase,sidoravicius2009,sznitman2010}), on certain (possibly disordered) trees and expander graphs (see \cite{AC20tree,AC20expander,
abacherli2018, cerny2023giant,sznitman2016coupling,
tassy,
teixeira2009interlacement}), as well as on general amenable transient graphs under certain global regularity assumptions (see \cite{drewitz2025,sznitman2012decoupling}). 
In a previous work, we investigated the strongly supercritical phase of level set percolation of the GFF on $\mathbb{Z}^d$, $d \geq 3$, equipped with \textit{uniformly elliptic} random conductances \cite{CN2021disconnection}. In the present case, we are interested whether the percolation phase transition for both models on the integer lattice persists under the presence of degenerate, non-elliptic disorder. 
The concrete environment we use to probe this effect is the supercritical cluster $\mathcal{C}_\infty$ of Bernoulli bond percolation. 
Remarkably, although $\mathcal{C}_\infty$ shares many \textit{macroscopic, large-scale} properties with $\mathbb{Z}^d$ (such as quenched invariance principles and Gaussian heat kernel bounds for the random walk) its \textit{microscopic geometry} is highly irregular, rendering infeasible many of the techniques to study the phase transition that rely on global estimates. In particular, a typical realization $\mathcal{C}_\infty$ does not fall into the class of graphs considered in~\cite{drewitz2025} (see also Remark~\ref{rem:Final} (2) for more on this aspect).

\medskip

As our main result, we prove non-trivial percolation phase transitions for the level sets of the GFF and the vacant set of random interlacements on $\mathcal{C}_\infty$ in all dimensions $d \geq 3$ and throughout the entire supercritical regime of the underlying Bernoulli bond percolation. 
We already highlight at this stage (but see also Remark~\ref{rmk:difficulty-1} below) that the presence of microscopic irregularities is known to substantially alter certain characteristics of the large-scale behavior of the GFF. The approach we present here requires a careful analysis of the microstructure of the infinite cluster of Bernoulli bond percolation, and as a main technical device we provide \textit{quenched uniform estimates along tree embeddings}. These estimates are then utilized to bring into play a static renormalization scheme due to~\cite{sznitman2010,
sznitman2012decoupling} to prove the existence of a phase transition. The particular form used here is essentially taken from \cite{rath2015transition}, but with an environment-dependent modification of the tree embedding, to take into account the local geometry of the cluster ``up to the lowest level of the tree embedding''. 
\medskip

We now describe our set-up and the main results in a more precise form. Further notational details are provided in Section~\ref{sec:notation} below. We consider the integer lattice $\mathbb{Z}^d$, $d \geq 3$, as a graph with nearest-neighbor edges $\mathbb{E}_d$. For a given value $p \in [0,1]$, we denote by $\mathbb{Q}_p$ the probability measure on $\{0,1\}^{\mathbb{E}_d}$ under which the canonical coordinates on $ \{0,1\}^{\mathbb{E}_d}$, denoted by $(\omega_e)_{e \in \mathbb{E}_d}$, are i.i.d.~Bernoulli random variables with parameter $p$. Throughout the article, we will assume that 
\begin{equation}
\label{eq:Supercritical-Bernoulli}
p \in (p_c(d),1), \qquad \text{where} \qquad p_c(d) = \sup\{p \in [0,1] \, : \, \mathbb{Q}_p[0 \leftrightarrow \infty] = 0 \} (\in (0,1))
\end{equation}
(with the event under the probability denoting the existence of an infinite connected component in the induced graph $(\mathbb{Z}^d, \{e \in \mathbb{E}_d \, :\, \omega_e = 1 \}$ containing $0$). It is well-known (see, e.g.,~\cite{grimmett1999}) that in the regime in~\eqref{eq:Supercritical-Bernoulli}, for a measurable subset $\widetilde{\Omega}_0 \subseteq \{0,1\}^{\mathbb{E}_d}$ of $\mathbb{Q}_p$-probability one, and every $\omega \in \widetilde{\Omega}_0$, there exists a unique infinite connected component in $(\mathbb{Z}^d, \{e \in \mathbb{E}_d \, :\, \omega_e = 1 \})$, which will be denoted by $\mathcal{C}_\infty$ and called the \textit{infinite cluster}. On $(\mathcal{C}_\infty,\mathbb{E}_{\mathcal{C}_\infty})$ (with $\mathbb{E}_{\mathcal{C}_\infty}$ denoting the set of edges $e \in \mathbb{E}_d$ with $\omega_e = 1$ and both endpoints in $\mathcal{C}_\infty$), one can naturally define a continuous-time, constant speed simple random walk $(X_t)_{t \geq 0}$ governed by the family $(P_x^\omega)_{x \in \mathcal{C}_\infty}$, which is the Markov process induced by the generator~\eqref{eq:Generator}. Moreover, we can choose a measurable subset $\Omega_0 \subseteq \widetilde{\Omega}_0$ such that for any $\omega \in \Omega_0$, the simple random walk on $(\mathcal{C}_\infty,\mathbb{E}_{\mathcal{C}_\infty})$ is transient (see Lemma~\ref{lem:Barlow-HK} below, which is taken from~\cite{barlow2004RWpercolation}). \medskip

Our first main result concerns the Gaussian free field on a typical realization of the infinite cluster $\mathcal{C}_\infty$. For $\omega \in \Omega_0$, we denote by $\bbP^G_\omega$ the canonical law on $\bbR^{\mathbb{Z}^d}$ that governs the discrete Gaussian free field on $\cC_\infty$ (extended by zero to $\bbZ^d$), and by $\varphi = (\varphi_x)_{x \in \mathbb{Z}^d}$ the canonical process. More precisely, $\bbP^G_\omega$ is the law on $\bbR^{\bbZ^d}$ (endowed with the canonical $\sigma$-algebra $\cF^G$) of a centered Gaussian field $(\varphi_x)_{x\in \bbZ^d}$ with covariances
\begin{equation}\label{eq:GFF-cov}
     \bbE_\omega^G[\varphi_x\varphi_y]=g^\omega(x,y) \IND_{\{x,y\in \cC_\infty\}} \quad \text{for $x,y\in \bbZ^d$}, 
\end{equation}
with $g^\omega(\cdot,\cdot)$ denoting the Green function of the simple random walk on $(\mathcal{C}_\infty,\mathbb{E}_{\mathcal{C}_\infty})$ (see~\eqref{eq:Green-F} for its precise definition, and note that $g^\omega(\cdot,\cdot)$ is finite due to our choice of $\Omega_0$) where, by a slight abuse of notation, the right hand side means the extension of $g^\omega$ to $\bbZ^d\times \bbZ^d$, obtained by setting its value to zero outside $\cC_\infty\times \cC_\infty$. Equivalently, under $\bbP_\omega^G$, the restriction $(\varphi_x)_{x\in \cC_\infty}$ is the Gaussian free field on $\cC_\infty$, and $\bbP_\omega^G$-a.s., $\varphi_x=0$ for $x\notin \cC_\infty$. \smallskip 

For $\alpha\in \bbR$, and $\omega \in \Omega_0$, we introduce the \textit{(upper) level set} of the GFF
\begin{equation}\label{eq:level-set}
    E^{\geq \alpha}_\omega = \{x\in \cC_\infty: \varphi_x \geq \alpha\} \  ( \subseteq \mathcal{C}_\infty). 
\end{equation}
Our first result addresses whether for any typical realization of the Bernoulli percolation cluster $\mathcal{C}_\infty$, the upper level set of the GFF itself undergoes a percolation phase transition. To that end, we define the (quenched) critical value for level set-percolation of the GFF for a given $\omega \in \Omega_0$, 
\begin{equation}\label{eq:alpha_star}
    \alpha_*^\omega = \inf \{\alpha\in \bbR \,:\, \bbP^G_\omega [\text{$E^{\geq \alpha}_\omega$ contains an infinite cluster}]=0\} \in [-\infty, \infty]
\end{equation}
(with the convention $\inf \varnothing = \infty$). We answer the question concerning the non-trivial phase transition of $E^{\geq \alpha}_\omega$ affirmatively in the following main result. 
\begin{theorem}\label{thm:PH-GFF}
For a measurable subset $\Omega^G \subseteq \Omega_0 (\subseteq \{0,1\}^{\mathbb{E}_d})$ with $\bbQ_p[\Omega^G] = 1$, one has that
\begin{equation}
\label{eq:Intro-Phase-Transition-GFF}
\begin{minipage}{0.8\textwidth}
for all $\omega \in \Omega^G$, $\alpha_*^\omega$ equals a deterministic constant $0\leq \alpha_*<\infty$.
\end{minipage}
\end{equation}
Moreover, for $\omega \in \Omega^G$, if $\alpha <\alpha_*^\omega$, then $E_\omega^{\geq \alpha}$ contains an infinite cluster $\bbP_\omega^G$-a.s.~and if $\alpha>\alpha_*^\omega$, then $E_\omega^{\geq \alpha}$ contains no infinite cluster $\bbP_\omega^G$-a.s.
\end{theorem}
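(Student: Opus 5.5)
The plan has four parts: show that $\alpha_*^\omega$ is a.s. deterministic, prove $\alpha_*^\omega \geq 0$, prove $\alpha_*^\omega<\infty$, and derive the dichotomy from monotonicity and ergodicity.

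\textbf{Step 1: $\alpha_*^\omega$ is deterministic and the dichotomy holds.} The event $\{E^{\geq\alpha}_\omega$ contains an infinite cluster$\}$ is a tail event for $\varphi$. Under $\bbP^G_\omega$, tail events of the GFF on a transient graph are trivial; this follows from the mixing of the GFF, since $g^\omega(x,y)\to 0$ as $|y|\to\infty$ by the heat kernel bounds of Lemma~\ref{lem:Barlow-HK}. Hence the probability of the event is $0$ or $1$. It is also monotone in $\alpha$. Together these give the dichotomy for $\alpha\neq\alpha_*^\omega$.

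For determinism, let $\tau_z$ denote shifts of the environment. Because $\mathcal{C}_\infty(\tau_z\omega)=\mathcal{C}_\infty(\omega)-z$ and the Green function is covariant, the law of the field is shift-covariant, so $\alpha_*^{\tau_z\omega}=\alpha_*^\omega$. The map $\omega\mapsto\alpha_*^\omega$ is measurable, as a countable infimum over rational $\alpha$ of measurable functions of $\omega$. Ergodicity of $\mathbb{Q}_p$ under lattice shifts then makes $\alpha_*^\omega$ almost surely equal to a constant $\alpha_*$.

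\textbf{Step 2: $\alpha_*\geq 0$.} I would show that $E^{\geq 0}_\omega$, or $E^{\geq -\varepsilon}_\omega$ for every $\varepsilon>0$, percolates. One route is an isomorphism or sign-clusters argument: on a transient graph, Lupu's cable-system isomorphism shows that the cable-system level set $\{\tilde\varphi\geq 0\}$, coupled with random interlacements, contains the interlacement trace, which is infinite. This gives percolation of $E^{\geq -\varepsilon}$ on the discrete graph; indeed, on any transient graph $\tilde\alpha_*\geq 0$ in the sense that $\{|\tilde\varphi|>0\}$ contains infinite clusters at every intensity.

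More simply, I would use the symmetry $\varphi\overset{d}{=}-\varphi$. If $E^{\geq 0}$ and $E^{\leq 0}$ both had only finite clusters almost surely, this would contradict a Burton--Keane-type argument combined with the fact that $\mathcal{C}_\infty$ is infinite and connected. That route is delicate, so I would rely on the interlacement-based argument, which needs only transience and bounded degree.

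\textbf{Step 3: $\alpha_*<\infty$.} This is the main obstacle. I would run the static renormalization of~\cite{sznitman2012decoupling,rath2015transition}. The bad events are $\{B_{L_n}(x)$ connected to $\partial B_{2L_n}(x)$ in $E^{\geq\alpha}\}$, on scales $L_{n+1}=\ell_0L_n$. Embedding a binary tree of depth $n$ reduces the event to $2^n$ bottom-level boxes, each of which must be crossed. At each level one decouples two separated boxes using the GFF decoupling inequality with sprinkling $\alpha_n\uparrow\alpha_\infty$. The error term involves $\sup g^\omega(x,y)$ over well-separated boxes and must be controlled quenched-uniformly.

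The difficulty is that Barlow's bounds hold only beyond a random scale. Some boxes therefore carry irregular local geometry, and the bottom-level crossing probability is not uniformly small. I would fix this with the environment-dependent tree embedding mentioned in the introduction. One declares boxes at the lowest level ``good'' when heat kernel and Green function bounds hold there and the local cluster is regular. Good boxes occur with probability tending to $1$ under $\mathbb{Q}_p$, using the stretched-exponential tails of the regularity scale. Then, for $\mathbb{Q}_p$-a.e.\ $\omega$ and all large $n$, every embedded tree can be rerouted to use only good boxes at its leaves; a Borel--Cantelli argument over the polynomially many trees, against a $2^{-c2^n}$ failure rate, handles this.

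On good boxes, the one-point bound $\bbP^G_\omega[\varphi_x\geq\alpha]\leq e^{-\alpha^2/(2g^\omega(x,x))}$ with $g^\omega(x,x)\leq C$ makes the seed probability small for large $\alpha$. The induction then yields doubly exponential decay of crossing probabilities, which gives $\alpha_*^\omega<\infty$.
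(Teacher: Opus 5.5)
The genuine gap is in Step~3, at the point the paper treats as its main technical contribution: quenched control of bad leaves, uniformly over all trees. You assert that for $\bbQ_p$-a.e.~$\omega$ and large $n$, every tree can be ``rerouted'' so that all its leaves are good, by Borel--Cantelli over ``polynomially many'' trees against a $2^{-c2^n}$ failure rate. This does not work, for three reasons.

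First, the tree is dictated by the crossing path through Lemma~\ref{lem:renormalization}, so its leaves cannot be moved. Since $\bbQ_p[Z_0=1]>0$, the probability that a fixed tree has all $2^n$ leaves good tends to $0$ (heuristically like $(1-\bbQ_p[Z_0=1])^{2^n}$). So ``all leaves good'' cannot hold uniformly. What is attainable is that every $\cT\in\Lambda_{n,0}$ has at least $2^{n-1}$ good leaves (Corollary~\ref{coro:subcritical}).

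Second, $|\Lambda_{n,0}|=\cC_{d,\ell_0}^{2^n-1}$ by \eqref{eq:Lambda_n-x-number}, which is doubly exponential in $n$. The per-tree deviation bound must therefore beat $\cC_{d,\ell_0}^{-2^n}$, and this forces $L_0$ to be large in terms of $\ell_0$, as in \eqref{eq:Two-conditions-on-L0}.

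Third, and most importantly, you give no mechanism for such a deviation bound. For goodness to yield $g^\omega(z,z)\le c_7M(L_0)$ and the off-diagonal bounds of Lemma~\ref{lem:Green-function-estimate}, it must control $S_z$, which involves cubes of all sizes containing $z$. Hence the leaf indicators $Z_{\cT(m)}$ are non-local and correlated along the tree. Knowing that good boxes have probability close to $1$ does not by itself give exponential concentration. The paper supplies this through Theorem~\ref{thm:general-proportion}:
\begin{itemize}
\item $Z_x$ is approximated at each scale by the $\cF_{E(B(x,3L_n))}$-measurable $Z^{(n)}_x$, built from the truncated $S^{(L_n)}_z$; it agrees with $Z_x$ off an event of probability at most $c\exp(-cL_n^{\eta_{\mathrm{hk}}})$.
\item The separation of branches in Lemma~\ref{lem:tree-geometry} gives the exponential-moment recursion $b_n\le b_{n-1}^2+2^n\varepsilon_{n-1}$.
\item Hence $\bbQ_p[\sum_{m}Z_{\cT(m)}\ge 2^{n-1}]\le c\,\cC_{d,\ell_0}^{-2\cdot 2^n}$, which survives the union bound over trees.
\end{itemize}

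Once half the leaves are known to be good, your sprinkled decoupling is unnecessary, and as sketched it is incomplete. After discarding bad leaves, some subtrees may contain few or no good leaves, and you do not say how the recursion or the intermediate-scale error terms are controlled on $\cC_\infty$. The paper never decouples the field. It picks one point $y_m\in B(\cT(m),L_0)\cap\cC_\infty$ per good leaf. Since \eqref{eq:off-diag-Green} only requires $|x-y|_\infty\ge c_8(S_x\wedge S_y)$, and every chosen point is regular, all pairwise bounds hold at distances $\ge 90L_0\ge c_8M(L_0)$, whatever happens at intermediate scales. Then \eqref{eq:capacity-bounds} gives $\capa^\omega(\cX_\cT)\ge c\,2^n/L_0$. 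Lemma~\ref{lem:GFF-capa} then gives the bound $\exp(-c\alpha^2 2^n/L_0)$, which beats the entropy $\cC_{d,\ell_0}^{2^n}(2L_0+1)^{d2^n}$ for large $\alpha$. Note also that your bound $g^\omega(x,x)\le C$ on good boxes must read $\le c_7M(L_0)$; otherwise good boxes would not have probability tending to $1$.

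Two smaller points. In Step~1, decay of $g^\omega(x,y)$ alone does not give tail triviality of a Gaussian field. You need that the variance of $\bbE^G_\omega[\varphi_x\mid\varphi_y,\,y\notin K]$, namely $g^\omega(x,x)-g^\omega_K(x,x)$ with $g^\omega_K$ the Green function killed on exiting $K$, tends to $0$ as $K\uparrow\cC_\infty$. Alternatively, use the annealed ergodicity behind Lemma~\ref{lem:0-1-law-GFF}. In Step~2, Lupu's interlacement coupling is not automatic on a general transient graph. It is simpler to cite Proposition~A.2 of \cite{abacherli2018}, as the paper does.
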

We briefly put this result into context. The corresponding existence of a non-trivial phase transition on $\bbZ^d$ (corresponding formally to the case $p = 1$) was established in~\cite{rodriguez2013phase} in all dimensions $d\geq 3$, whereas $\alpha_\ast \geq 0$ (on $\mathbb{Z}^d$, $d \geq 3$) and $\alpha_\ast < \infty$ (in $d=3$) had been previously established in~\cite{bricmont1987percolation}. In fact, $\alpha_* \geq 0$ is known to hold for any locally finite, connected, transient weighted graph (in particular, on $\mathcal{C}_\infty$, for $\omega \in \Omega_0$) essentially using a contour argument due to~\cite{bricmont1987percolation} (see~\cite[Proposition A.2]{abacherli2018}), and our contribution is the proof of the finiteness of $\alpha_*$. In the case where the environment $\omega \in \{0,1\}^{\mathbb{E}_d}$ is replaced by uniformly elliptic random conductances $(\widetilde{\omega}_e)_{e \in \mathbb{E}_d} \in [\lambda_{\min},1]^{\mathbb{E}_d}$ for $\lambda_{\min}\in (0,1)$, the result corresponding to~\eqref{eq:Intro-Phase-Transition-GFF} was established in~\cite{CN2021disconnection} (see Proposition~3.1 and Theorems 3.2, 3.6 therein), essentially by a comparison with the case of general weighted graphs fulfilling certain global regularity properties considered in~\cite{drewitz2025}. However, a typical realization of the Bernoulli percolation cluster does \textit{not} fall into the class of graphs treated in~\cite{drewitz2025}, due to regions of arbitrarily bad geometry, which prevents the use of global controls on the volume of balls and Green function bounds. \medskip

We now turn to the second percolation model considered in the present article, the vacant set of random interlacements on the infinite cluster $\mathcal{C}_\infty$. Random interlacements were first introduced in~\cite{sznitman2010} (see Remark~1.4 therein for the extension beyond $\bbZ^d$, see also \cite{teixeira2009interlacement}) and are related to the fragmentation of (connected, locally finite, transient) graphs by the trajectories of random walks. In our context, we again consider $\omega \in \Omega_0$ and introduce for $u \geq 0$ a probability measure $\mathbb{P}^I_\omega$ governing the \textit{random interlacements} $(\mathcal{I}_\omega^u)_{u \geq 0}$ (we refer to Subsection~\ref{subsec:RI} for precise definitions, and record for now that for $u \geq 0$, $\omega \in \Omega_0$, $\mathcal{I}^u_\omega$ is the trace on $\mathcal{C}_\infty$ of a Poissonian cloud of bi-infinite random walk trajectories). For $u \geq 0$, and $\omega \in \Omega_0$, we then define the \textit{vacant set of random interlacements}
\begin{equation}
\mathcal{V}^u_\omega  = \mathcal{C}_\infty  \setminus \mathcal{I}^u_\omega \  ( \subseteq \mathcal{C}_\infty),
\end{equation}
a random subset of $\mathcal{C}_\infty$ which becomes thinner as $u$ increases (see~\eqref{eq:law-Vu} for a characterization of the law of $\mathcal{V}^u_\omega$). Analogously to~\eqref{eq:alpha_star}, we define the (quenched) critical level for percolation of the vacant set of random interlacements for a given $\omega \in \Omega_0$ as
\begin{equation}\label{eq:u_star}
    u_*^\omega = \inf \{u \in (0,\infty) \,:\, \bbP^I_\omega [\text{$\mathcal{V}^{u}_\omega$ contains an infinite cluster}]=0\} \in [0,\infty].
\end{equation}
Our main result concerning interlacement percolation on $\mathcal{C}_\infty$ is as follows:
\begin{theorem}\label{thm:PH-RI}
For a measurable subset $\Omega^I \subseteq \Omega_0 (\subseteq \{0,1\}^{\mathbb{E}_d})$ with $\bbQ_p[\Omega^I] = 1$, one has that
\begin{equation}
\label{eq:Intro-Phase-Transition-RI}
\begin{minipage}{0.8\textwidth}
for all $\omega \in \Omega^I$, $u_*^\omega$ equals a deterministic constant $u_* \in (0,\infty)$.
\end{minipage}
\end{equation}
Moreover, for $\omega \in \Omega^I$, if $u <u_*^\omega$, then $\cV^u_\omega$ contains an infinite cluster $\bbP_\omega^I$-a.s.~and if $u>u_*^\omega$, then $\cV^u_\omega$ contains no infinite cluster $\bbP_\omega^I$-a.s.
\end{theorem}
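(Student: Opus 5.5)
The proof splits into four parts: (i) $u_*^\omega$ is almost surely a deterministic constant and the dichotomy holds on either side of it; (ii) $u_*<\infty$; (iii) $u_*>0$; (iv) the uniform quenched controls along tree embeddings that make (ii) and (iii) work on $\mathcal{C}_\infty$.

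\emph{Step (i): constancy and dichotomy.} For fixed $u$, the event that $\mathcal{V}^u_\omega$ contains an infinite cluster is invariant under translations of the interlacement configuration. Under $\mathbb{P}^I_\omega$ it is a tail-type event. I would argue that it has $\mathbb{P}^I_\omega$-probability $0$ or $1$, using ergodicity of interlacements on a transient graph. On the infinite cluster I would instead use a direct argument, since the cluster is not translation invariant. The event is unaffected by the trajectories hitting any fixed finite set, and the Poisson structure then gives a zero--one law via a standard approximation. Monotonicity in $u$ then gives the dichotomy for $u\neq u_*^\omega$.

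Next, $\omega\mapsto u_*^\omega$ is measurable. It is also invariant under the lattice shifts $\tau_x$ acting on $\{0,1\}^{\mathbb{E}_d}$: the interlacement law on $\tau_x\omega$ is the image of the one on $\omega$, because $\mathcal{C}_\infty(\tau_x\omega)=\mathcal{C}_\infty(\omega)-x$. Ergodicity of $\mathbb{Q}_p$ under shifts then forces $u_*^\omega$ to equal a constant $u_*\in[0,\infty]$ for $\mathbb{Q}_p$-a.e.\ $\omega$. It remains to show $0<u_*<\infty$.

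\emph{Step (ii): $u_*<\infty$.} I would use the static renormalization scheme of \cite{sznitman2010,sznitman2012decoupling} in the form of \cite{rath2015transition}, applied to crossing events. Let $L_n=\ell_0^n L_0$ and let $A_n$ be the event that $\mathcal{V}^{u_n}$ crosses from $B(x,L_n)$ to $\partial B(x,2L_n)$ inside $\mathcal{C}_\infty$. A crossing at scale $n$ forces crossings in two well-separated boxes at scale $n-1$, so iterating gives a binary tree of depth $n$ with $2^n$ leaves at scale $L_0$. The sprinkled decoupling inequalities for interlacements then give
\[
\mathbb{P}^I_\omega[A_n]\le C(\ell_0)^{2^n}\bigl(\sup_{\text{leaves}}\mathbb{P}^I_\omega[A_0]\bigr)^{2^n}+\text{error},
\]
where the sup runs over leaf boxes, and the sprinkling is $u_{n}=u_{n-1}(1+c\,\ell_0^{-(n-1)a})$. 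On $\mathcal{C}_\infty$ these decouplings need Green function bounds $g^\omega(x,y)\le C|x-y|^{2-d}$ and capacity comparisons between the boxes. By Lemma~\ref{lem:Barlow-HK} (Barlow's heat kernel bounds) these hold only beyond a random scale $S_x(\omega)$ with stretched-exponential tails. For $u_*<\infty$, the base case needs $\mathbb{P}^I_\omega[A_0]$ to be small at large $u$. This holds because each leaf box that meets $\mathcal{C}_\infty$ in a crossing path has capacity bounded below, so the path is covered with probability at least $1-e^{-cu}$.

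\emph{Step (iii): $u_*>0$.} Here the plan is to produce an infinite vacant path for small $u$ inside a well-behaved part of the cluster. Take the "good" boxes at scale $L_0$: those in which the cluster is locally regular, with a unique crossing cluster, Green function and volume controls, and paths between neighbours of length $\le CL_0$. These dominate supercritical site percolation (Antal--Pisztora / Liggett--Schonmann--Stacey). A dual renormalization on the $*$-connected bad-event sets, again via the decoupling inequalities, then shows that for small $u$ the vacant set percolates inside the good cluster.

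\emph{Step (iv): the main obstacle.} The hard part is making the decouplings uniform in $\omega$ over all embedded trees at once. The tree at scale $n$ has about $\ell_0^{d n 2^n}$ possible embeddings, and some leaves may fall into regions where $S_x(\omega)$ is large. I would define a random, environment-dependent modification of the tree embedding: a leaf landing in a bad region is replaced by a nearby good box of comparable scale. The key is a quenched estimate that, $\mathbb{Q}_p$-a.s., for all large $n$, every box of size $L_n$ near the origin contains at most $2^{n}/10$ bad leaves along any tree. I would derive this from the stretched-exponential tails of $S_x$ by a union bound together with Borel--Cantelli. The combinatorial cost is then absorbed by the doubly exponential decay $(\cdot)^{2^n}$.
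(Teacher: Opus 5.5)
Your overall architecture (tree embeddings, a quenched bound on the number of bad leaves, coarse-graining for $u_*>0$) matches the paper. However, the step that carries the whole argument, your step~(iv), fails as proposed. The badness of a leaf, e.g.\ $Z_x=\IND\{\sup_{z\in B(x,L_0)}S_z>M(L_0)\}$, is a non-local function of $\omega$: $S_z$ is a supremum over cubes of \emph{all} sizes containing $z$. So leaves in a common subtree are correlated through a single large irregular region. To survive the union over the $\mathcal{C}_{d,\ell_0}^{2^n-1}$ embeddings, you need, for each \emph{fixed} tree, that the probability of at least $\rho 2^n$ bad leaves is at most $e^{-C2^n}$ with $C>\log\mathcal{C}_{d,\ell_0}$. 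Single-site stretched-exponential tails cannot give this without some independence; Markov's inequality only gives $\vartheta/\rho$, where $\vartheta$ is the probability that a leaf is bad. A Borel--Cantelli argument over all sites near the origin only yields $\sup_{x\in B(0,CL_n)}S_x\le(\log L_n)^{C}$. That bound grows with $n$ and destroys the estimate $\mathrm{Cap}^\omega(\mathcal{X})\ge c\,2^n$, which must beat an entropy of order $e^{C2^n}$. The missing idea is the paper's approximation scheme. Truncating $S_z$ at scale $L_n$ gives $Z^{(n)}_x$, which is $\mathcal{F}_{E(B(x,3L_n))}$-measurable by \eqref{eq:LQ-measurable} and agrees with $Z^{(n+1)}_x$ and with $Z_x$ off events of probability $\le c\exp(-cL_n^{\eta_{\mathrm{hk}}})$. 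The two subtrees are at distance $\ge 90L_{n-1}$, so their level-$(n-1)$ fields are independent. This yields the recursion $b_n\le b_{n-1}^2+2^n\varepsilon_{n-1}$ for normalized exponential moments, hence the per-tree bound $c\exp(-2\cdot 2^n\log\mathcal{C}_{d,\ell_0})$ (Lemma~\ref{lem:multi-itera}, Theorem~\ref{thm:general-proportion}). Also, replacing a bad leaf by a nearby good box is not legitimate: Lemma~\ref{lem:renormalization} only guarantees that the path meets the specific leaf boxes. Instead, discard the bad leaves and keep the at least $2^{n-1}$ good ones, which only halves the exponent.

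Two further steps fail as written. In step~(i), the percolation event is not determined by the trajectories avoiding a finite set $K$. The trajectories hitting $K$ are transient bi-infinite paths occupying infinitely many vertices outside $K$, so Poisson independence gives no Kolmogorov-type argument. Your ``standard approximation'' is really a nontrivial tail-triviality statement on a non-transitive graph; the paper invokes \cite[Theorem~12]{collin2026zeroone}. Alternatively, you could prove ergodicity of the annealed law under joint shifts, as the paper does for the GFF. In steps~(ii)--(iii), sprinkled decoupling inequalities are not available in quenched form on $\mathcal{C}_\infty$, since they need Green function control you only have at good leaves, and they are unnecessary. For $u_*<\infty$, pick one point $y_m\in B(\mathcal{T}(m),L_0)\cap\mathcal{C}_\infty$ per good leaf, at the cost of at most $(2L_0+1)^{d2^n}$ choices. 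Then use $\mathbb{P}^I_\omega[\mathcal{X}\subseteq\mathcal{V}^u]=\exp(-u\,\mathrm{Cap}^\omega(\mathcal{X}))$ with $\mathrm{Cap}^\omega(\mathcal{X})\ge cL_0^{-1}2^n$, which follows from the on- and off-diagonal Green bounds in good, well-separated boxes. For $u_*>0$, your good boxes mix finite-range crossing events with non-local Green and volume regularity, so Liggett--Schonmann--Stacey does not apply to them directly. The paper instead:
\begin{itemize}
\item dominates only the $2$-dependent crossing field $\phi$;
\item works on the coarse plane $\mathbf{F}$ so it can use planar duality;
\item controls the $S_y$-regularity of plates through step~(iv) again;
\item bounds the number of porous frames of good plates visited by a single walk by a geometric variable, following R\'ath, with no sprinkling.
\end{itemize}
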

On $\mathbb{Z}^d$, $d \geq 3$, the percolation phase transition of the vacant set of random interlacements was established in~\cite{sidoravicius2009,sznitman2010} (with the finiteness in $d \geq 3$ the and positivity of $u_\ast$ in $d \geq 7$ proved in~\cite{sznitman2010}, and the positivity extended to all $d \geq 3$ in~\cite{sidoravicius2009}). For graphs fulfilling several global regularity conditions, the existence of a non-trivial percolation transition of the vacant set of random interlacements has been obtained in~\cite{drewitz2025,sznitman2012decoupling}, but these criteria are not fulfilled for a typical realization of the Bernoulli percolation cluster. 
On $\mathbb{Z}^d$, $d \geq 3$, planar duality can be helpful to establish via a Peierls-type argument the existence of a supercritical phase (meaning $u_\ast > 0$), upon utilizing a static renormalization scheme  
(see, e.g.,~\cite{rath2015transition}). In our context, proving the equivalent statement is made yet more challenging, since the intersection of the percolation cluster $\mathcal{C}_\infty$ with the ``plane'' $\mathbb{Z}^2 \times \{0\}^{d-2}$ does not allow a simple planar duality argument to be deployed. \medskip

We now comment on the strategy of the proof and highlight some of the techniques that are developed in the present article. The main obstruction for establishing~\eqref{eq:Intro-Phase-Transition-GFF} and~\eqref{eq:Intro-Phase-Transition-RI} is the presence of arbitrarily irregular regions attached to a typical realization $\mathcal{C}_\infty$ of the Bernoulli percolation cluster. These regions (such as ``tubes'') can effectively act as low-dimensional objects in the cluster, and lead to degenerations of the Green function $g^\omega(x,x)$ (for instance, for all $\omega \in \Omega_0$, one has $\sup_{x \in \mathcal{C}_\infty} g^\omega(x,x) = \infty$). This effect significantly alters certain properties of $\mathcal{C}_\infty$, including the scaling of effective resistances in boxes (see~\cite{abe2015effective}), the behavior of directed polymer measures (see~\cite{nitzschner2022polymer}), and the maxima of the Gaussian free field (see~\cite{schweiger2024maximum} and Remark~\ref{rmk:difficulty-1} below). 
Our approach to show that $\alpha_\ast < \infty$ and $u_\ast < \infty$ is to utilize a static renormalization scheme that relates the probability of an annulus-crossing at scale $L_n$ to that of two annulus-crossings on a smaller scale $L_{n-1}$, where $L_n = L_0 \ell_0^n$ (with some appropriately chosen constants $L_0 \geq 1, \ell_0 \geq 100$). This naturally leads to a ``tree embedding'' into $\mathbb{Z}^d$, in which the leaves correspond to crossings of boxes of a constant size $L_0$. The challenge is then to guarantee that at least a constant fraction of the boxes at the lowest level do not contain an irregular region for $\mathcal{C}_\infty$ on which the random walks are ill-behaved.  
A key point is that the relevant quantities controlling such annulus-crossing probabilities (e.g., the Green function $g^\omega(\cdot,\cdot)$, the capacity $\mathrm{Cap}^\omega(\cdot)$) are inherently \emph{non-local} functionals of the environment $\omega$. \medskip 

We now turn to $u_\ast>0$. Compared to $\mathbb{Z}^d$, a major obstruction is that a two-dimensional planar duality argument, which can be applied to the trace of $\cI^u$ on a coordinate plane $\bbF=\mathbb{Z}^2\times\{0\}^{d-2}$, is not directly available to us here. In fact, for $p\in(p_c(d),1/2)$, $\mathcal{C}_\infty\cap \bbF$ does not even contain an infinite connected component for $\bbQ_p$-a.e~$\omega$. To circumvent this, we perform an \emph{additional}, preliminary renormalization step (``Renormalization~I'' in Section~5.1) in which we coarsen $\mathcal{C}_\infty$ into an auxiliary, almost surely percolating planar structure on which a version of the tree-embedding/duality argument can then be implemented (``Renormalization~II'' in Section~5.2). This additional step requires a further layer of \emph{geometric} regularity, guaranteeing that the coarse-grained blocks are crossed by a unique, well-connected cluster of $\cC_\infty$ that can subsequently be lifted back to an infinite cluster of the vacant set on $\bbZ^d$. \medskip

Both of these decoupling tasks turn out to fit into a single, common framework, which is the main technical contribution of the present article and is developed in Section~\ref{sec:Quenched-Unif-Estimates}. The idea is to introduce, for any collection of Bernoulli random variables $(\zeta_x)_{x\in\bbL_0}$ an approximation at every scale $L_n=L_0\ell_0^n$ (with integers $L_0 \geq 1$, $\ell_0 \geq 100$) by a genuinely local field $(\zeta_x^{(n)})_{x\in \bbL_0}$, with $\zeta_x^{(n)}$ depending only on the environment in a ball around $x$ of radius comparable to $L_n$ (Assumption~\ref{ass:local-rv}, Definition~\ref{def:approximable}). These approximating fields are used to produce a uniform control, over \emph{all} tree embeddings of the proportion of ``bad'' leaves (Theorem~\ref{thm:general-proportion}). The proof relies on the spatial separation between distinct branches of a tree embedding, guaranteed by Lemma~\ref{lem:tree-geometry}, in order to run an exponential moment recursion across scales that decouples the field along the tree, at the cost of a small, explicitly controlled error probability at each scale. This strategy is then applied to Section~\ref{sec:Subcritical-Phase} with $\zeta_x$ built out of certain random variables characterizing the size of the local degeneracies of $\cC_\infty$ appearing in~\cite{barlow2004RWpercolation}, and to Section~\ref{sec:Supercritical-Phase} with $\zeta_x$ built out of both those random variables and geometric-regularity events described above. 
This framework might plausibly be of independent interest, and could be useful in more general contexts of correlated random fields on $\mathcal{C}_\infty$ (see also Remark~\ref{rem:Final} (1)). \medskip

We now describe how this article is organized. In Section~\ref{sec:notation}, we collect further notation and useful results concerning Bernoulli bond percolation, random walks, the Gaussian free field, and random interlacements.  In Section~\ref{sec:Quenched-Unif-Estimates} we introduce a notion of approximability in Definition~\ref{def:approximable}, and prove in Theorem~\ref{thm:general-proportion} the pivotal quenched selection argument along tree embeddings. In Section~\ref{sec:Subcritical-Phase} we apply the results of Section~\ref{sec:Quenched-Unif-Estimates} to prove the existence of a subcritical phase for both the level sets of the GFF and the vacant set of random interlacements for $\mathbb{Q}_p$-a.e.~realization of the infinite Bernoulli cluster, corresponding to the inequalities $\alpha_\ast < \infty$ and $u_\ast< \infty$. This establishes the main result~\eqref{eq:Intro-Phase-Transition-GFF} for the GFF as well as the first part of our main result~\eqref{eq:Intro-Phase-Transition-RI} for random interlacements. Finally, in Section~\ref{sec:Supercritical-Phase}, we prove the existence of a supercritical phase for random interlacements for $\mathbb{Q}_p$-a.e.~realization of the infinite Bernoulli cluster, namely that $u_\ast > 0$, concluding the proof of~\eqref{eq:Intro-Phase-Transition-RI}.
 \medskip

Our convention on constants is as follows. We denote by $C, c, c', \dots$ generic positive constants changing from place to place, which may implicitly depend on the dimension $d$ and the parameter $p \in (p_c(d),1)$ governing Bernoulli bond percolation. Numbered constants $c_1,c_2,...$ refer to the value corresponding to their first appearance in the text. Their appearance in a statement should be understood as asserting the existence of such constants for which the statement holds.

\section{Notation and useful results}
\label{sec:notation}

In this section, we introduce further notation and formally define the Gaussian free field (GFF) and random interlacements on a fixed realization of a Bernoulli bond percolation cluster. We also recall or establish several useful results, including zero–one laws for the existence of infinite clusters in the level sets of the GFF and in the vacant set of random interlacements.
\medskip

We begin by introducing some notation. Throughout the article we assume that $d\geq 3$ unless otherwise stated. We define $\overline{\bbR}=\bbR\cup\{\pm \infty\}$, $\bbN=\{1,2,\dots\}$, and $\bbN_0=\bbN\cup\{0\}$. 
Let $|\,\cdot\,|_1$, $|\,\cdot\,|_2$, and $|\,\cdot\,|_\infty$ be the $\ell_1$-, $\ell_2$-, and $\ell_\infty$-norm on $\bbR^d$. For $ s\in \mathbb{R}$, we denote by $\lfloor s \rfloor$ the largest integer smaller than or equal to $s$, and by $\lceil s \rceil$ the smallest integer larger than or equal to $s$.  
For $R \geq 0$, we denote by $B(x,R)=\{y\in\bbZ^d: |y-x|_\infty \leq R\}$ the box centered at $x$ with radius $R$ and by $S(x,R)=\{y\in \bbZ^d\,:\, |y-x|_\infty = \lfloor R \rfloor \}$ the sphere centered at $x$ with radius $R$. For $L\in \bbN$, we say a set $Q$ is a \emph{cube} with side-length $L$ if $Q=y+\{0,\dots,L-1\}^d$ for some $y\in \bbZ^d$ and denote its side-length by $s(Q)$. We let $Q^+=A \cap \bbZ^d$, where $A$ is the cube in $\bbR^d$ with the same center as $Q$ and with side-length $\frac{3}{2}L$. 
We say that two vertices $x,y\in \bbZ^d$ are neighbors (resp.~$*$-neighbors) if $|x-y|_1=1$ (resp.~$|x-y|_\infty=1$), and write $x\sim y$ (resp.~$x \stackrel{*}{\sim} y$).
A function $\gamma:\{0,\dots,n\} \to\bbZ^d$ is called a nearest-neighbor (resp.~$*$-connected) path (of length $n\geq 1$) if $\gamma_i\sim \gamma_{i+1}$ (resp.~$\gamma_i\stackrel{*}{\sim} \gamma_{i+1}$) for all $0\leq i \leq n-1$. We sometimes refer to a nearest-neighbor path simply as a path. We say that a subgraph of $\bbZ^d$ is \emph{connected} if there exists a path between any two vertices. The maximal connected subgraphs (with respect to set inclusion) are called the \emph{connected components}, or \emph{clusters}. 
For $K\subseteq \bbZ^d$, we let $|K|$ stand for the cardinality of $K$. If $K\subseteq L$ we write $\partial_L K=\{y\in L\setminus K:y\sim x \text{ for some $x\in K$}\}$ for the external boundary of $K$ relative to $L$. 
For $A \subseteq \bbZ^d$, we denote the set of edges in $A$ by $E(A) = \{\{x,y\} \, : \, x,y \in A, x \sim y \}$ and use $\mathbb{E}_d = E(\bbZ^d)$  as a shorthand notation. Given two  sets $A,\,B\subseteq \bbZ^d$ we denote by $d(A,B) = \inf\{|x-y|_\infty\,:\,x\in A,\,y\in B\}$ the Euclidean distance between two sets, and define the diameter of $A$ by $\mathrm{diam}(A) = \sup\{|x-y|_\infty \, :\, x,y \in A \}$.
\medskip

We now introduce further notation concerning Bernoulli bond percolation. 
We define $\Omega_{\mathrm{bond}}=\{0,1\}^{\bbE_d}$ and endow it with the canonical $\sigma$-algebra $\cF_{\mathrm{bond}}$. We consider the product measure $\bbQ_p$ on $(\Omega_{\mathrm{bond}}, \cF_{\mathrm{bond}})$ such that the canonical coordinates $(\omega_e)_{e \in \bbE_d}$ are independent and for each $e \in \bbE_d$, $\omega_e$ follows a Bernoulli distribution with parameter $p \in [0,1]$, that is $\bbQ_p[\omega_e=1]=1-\bbQ_p[\omega_e=0]=p$. Edges $e$ with $\omega_e=1$ are called \emph{open}. We say a subset of $\bbE_d$ is \emph{open} if all its elements are open. 
We let $\cC$ be the set of open edges and write $x \stackrel{\cC}{\leftrightarrow} y$ if there exists an open path connecting $x$ and $y$. 
For $A\subseteq \bbE_d$, we define $\cF_A=\sigma(\{\omega_e:e\in A\})$.  It is well-known (see, e.g.,~\cite{grimmett1999}) that there exists $p_c\in (0,1)$ such that for $p>p_c$ there is a measurable subset $\widetilde{\Omega}_0 \subseteq \Omega_{\mathrm{bond}}$ with $\bbQ_p[\widetilde{\Omega}_0]=1$ that ensures a unique infinite open cluster for every $\omega \in \widetilde{\Omega}_0$. In what follows, we restrict our attention to $\omega \in \widetilde{\Omega}_0$. We denote by $\cC_\infty=\cC_\infty(\omega)$ the unique infinite cluster. Note that $\cC_\infty$ is viewed as a subgraph of $\bbZ^d$ with edges in $\mathbb{E}_{\mathcal{C}_\infty}  \stackrel{\mathrm{def}}{=} \{\{x,y\} \, : \, \omega_{\{x,y\}} =1, x,y \in \mathcal{C}_\infty \}$. For $x\in \cC_\infty$, we define the weights
\begin{equation}
\label{eq:Weights-given-mu}
    \omega_{x} = \sum_{y \in \mathbb{Z}^d :y\sim x} \omega_{\{x,y\}}, 
\end{equation}
and extend $(\omega_e)_{e\in \mathbb{E}_{\mathcal{C}_\infty}}$ and $(\omega_x)_{x\in \cC_\infty}$ to measures on $\mathbb{E}_{\mathcal{C}_\infty}$ and $\cC_\infty$, respectively. 
We will also need the group of space-shifts on $\Omega_{\mathrm{bond}}$:
\begin{equation}
    \tau_x: \Omega_{\mathrm{bond}} \to \Omega_{\mathrm{bond}}, \quad (\omega_e)_{e\in \bbE_d} \mapsto (\omega_{x+e})_{e\in \bbE_d} ,\quad \text{for $x\in\bbZ^d$}
\end{equation}
(with $x+e=\{x+y,x+z\}$ for $e=\{y,z\} \in \bbE_d$). We can assume, without loss of generality, that $\tau_{x}^{-1}(\widetilde{\Omega}_0)= \widetilde{\Omega}_0 $ for all $x \in \mathbb{Z}^d$, and will tacitly work under this assumption throughout the remainder of the Section.
\subsection{Random walks on \texorpdfstring{$\cC_\infty$}{C\_infinity} and some potential theory} 

We write $\Gamma(\mathbb{R}_{\geq 0},\mathbb{Z}^d)$ for the space of right-continuous, piecewise constant functions from $\mathbb{R}_{\geq 0} \stackrel{\mathrm{def.}}{=} [0,\infty)$ to $\mathbb{Z}^d$ that have finitely many jumps on any bounded interval. We let $X=(X_t)_{t\geq 0}$ be the canonical process on $\Gamma(\mathbb{R}_{\geq 0},\mathbb{Z}^d)$, and write $\mathrm{range}(X) = \{X_t \, : \, t \in \mathbb{R}_{\geq 0} \}$ for the range of $X$. Under $P_x^\omega$, $x \in \cC_\infty$, $X$ is distributed as a continuous-time, constant-speed simple random walk on $\cC_\infty$ starting from $x$. Formally, $X$ can be viewed as a Markov process with generator 
\begin{equation}
\label{eq:Generator}
\cL_{\cC_\infty} f(x)=\frac{1}{\omega_x}\sum_{y\in \cC_\infty}\omega_{\{x,y\}}(f(y)-f(x)), \quad x\in \cC_\infty,
\end{equation}
for $f:\cC_\infty\to \bbR$, with $\omega_x$ as in~\eqref{eq:Weights-given-mu}, which waits an exponential time of mean one at each vertex and jumps to a uniformly chosen nearest neighbor in $\cC_\infty$. Note that 
\begin{equation}\label{eq:tau-RW}
    P_x^{\tau_z\omega}[X_t=y] = P^\omega_{x+z}[X_t=y+z]. 
\end{equation}
Given a probability measure $\nu$ on $\cC_\infty$, we define for a measurable subset $A$ in $\Gamma(\bbR_{\geq 0}, \bbZ^d)$ the probability measure
\begin{equation}\label{eq:P-RW-average}
    P^\omega_\nu[A] = \sum_{x\in \cC_\infty} \nu_x P_x[A], 
\end{equation}
governing a continuous-time random walk on $\mathcal{C}_\infty$ with starting distribution $\nu$. \medskip

We now introduce a notion from \cite{barlow2004RWpercolation} of local regularity which is captured by an event $\cL(Q)$ defined in terms of a cube $Q$ of side-length $L$. As the details are not critical here, we only briefly recall the main elements of the definition in the following. The definition of $\cL(Q)$ is related to the notion of a \emph{very good} box. We refer to \cite[Section~2]{barlow2004RWpercolation} and in particular the equation above Theorem~2.23 therein for the precise definition. Roughly speaking, a box is very good if each of its sub-boxes whose radius exceeds a prescribed regularity scale satisfies both volume regularity and a weak Poincar\'{e} inequality (see Definition~1.7 in \cite{barlow2004RWpercolation} for a precise definition). The event $\cL(Q)$ encapsulates a set of local regularity conditions, which ensures that for $\omega \in \cL(Q)$ all boxes $B(x,\frac{3}{2}R)$ in a slightly enlarged version of $Q$, with $cL^{1/11}\leq R \leq L$, are very good. Furthermore, for any two vertices in $Q$ that are sufficiently far apart, there exists a chain of very good boxes of constant order radius connecting them. 

\begin{lemma}[\cite{barlow2004RWpercolation}, Section 2, Proposition~6.1, Theorem~1]\label{lem:Barlow-HK}
    For $x\in \bbZ^d$, let 
    \begin{equation}\label{eq:S_x}
        S_x = \sup\{L\in \bbN\,:\, \text{$\cL(Q)$ fails for some cube $Q$ with side-length $L$ containing $x$}\}
    \end{equation}
(with the convention that $\sup \varnothing = 0$). \smallskip

    The following properties hold. 
    \begin{equation}\label{eq:LQ-measurable}
        \begin{minipage}{0.8\textwidth}
            For $Q=y+\{0,\dots,L-1\}^d$ with some $y\in \bbZ^d$ and $L\in \bbN$, one has
        \[
        \cL(Q) \in \cF_{E(y+\{-L,\dots,2L-1\}^d)}. 
        \]
        \end{minipage}
    \end{equation}
    \begin{equation}\label{eq:Barlow-HK}
        \begin{minipage}{0.9\textwidth}
            There exists a measurable subset $\Omega_0\subseteq \widetilde{\Omega}_0$ with $\bbQ_p[\Omega_0]=1$ and $\Omega_0=\tau_z^{-1}(\Omega_0)$ for all $z\in \bbZ^d$ such that for $\omega \in \Omega_0$, $S_x(\omega)<\infty$ for all $x\in\bbZ^d$ and for $x,y \in \cC_\infty(\omega)$, $t\geq S_x(\omega) \vee c_0|x-y|_\infty$,  
        \begin{equation*}
            c_{1} t^{-d/2} \exp\Big(-c_2 \frac{|x-y|_\infty^2}{t}\Big) \leq \frac{P_x^\omega[X_t=y]}{\omega_y} \leq c_{3} t^{-d/2} \exp\Big(-c_{4}\frac{|x-y|_\infty^2}{t}\Big).
        \end{equation*}
        \end{minipage}
    \end{equation}
    \begin{equation}\label{eq:S_x-decay}
        \begin{minipage}{0.9\textwidth}
            For $x\in \bbZ^d$ and $n \geq 1$, 
        \begin{equation*}
            \bbQ_p[S_x \geq n] \leq c_5 \exp(-c_{6} n^{\eta_{\mathrm{hk}}}), \text{ where }  \eta_{\mathrm{hk}}= \frac{d-1}{11(d+1)(d+2)}.
        \end{equation*}
        \end{minipage}
    \end{equation}
\end{lemma}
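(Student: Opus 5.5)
This lemma is essentially a compilation of results from \cite{barlow2004RWpercolation}, so the plan is to check that each of the three assertions follows from the corresponding statement there, adapting only the notation (constant-speed walk, the $\ell_\infty$-norm, and the shift-invariance of $\Omega_0$).

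For \eqref{eq:LQ-measurable}, I would unfold the definition of $\cL(Q)$ in \cite[Section~2]{barlow2004RWpercolation}. It is a finite intersection of events requiring certain boxes $B(x,\tfrac32 R)$ to be very good, with $x$ in a slightly enlarged copy of $Q$ and $R\le L$, together with a chain condition on very good boxes of constant radius inside $Q$. Whether a box is very good depends only on the open edges inside that box, since volume regularity and the weak Poincaré inequality are statements about the open clusters within the box. Each such box lies in $y+\{-L,\dots,2L-1\}^d$ as long as the enlargement of $Q$ and the factor $\tfrac32$ keep radii below $L$. This is elementary bookkeeping of the constants in Barlow's definition, and the measurability follows.

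For \eqref{eq:S_x-decay}, I would use Barlow's estimate $\bbQ_p[\cL(Q)^c]\le c\exp(-cL^{\eta})$ for a cube of side length $L$ (his Section~2, after Theorem~2.23, with exponent $\eta_{\mathrm{hk}}$ arising from the $L^{1/11}$ regularity scale and the exponents in Proposition~2.21 there). A cube of side $L$ containing $x$ is one of $L^d$ possible cubes, so a union bound gives
\[
\bbQ_p[S_x\ge n]\le\sum_{L\ge n}L^d\, c\exp(-cL^{\eta_{\mathrm{hk}}})\le c_5\exp(-c_6 n^{\eta_{\mathrm{hk}}}),
\]
after adjusting constants. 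By Borel--Cantelli and a countable union over $x\in\bbZ^d$, it follows that $S_x<\infty$ for all $x$ on a full-measure event. Intersecting this event with all of its shifts, $\bigcap_z\tau_z^{-1}(\cdot)$, which is still a countable intersection of full-measure events, gives shift invariance. One also needs $\Omega_0 \subseteq \widetilde{\Omega}_0$, which we arrange by intersecting with $\widetilde{\Omega}_0$; this is harmless because $\widetilde{\Omega}_0$ is itself shift invariant.

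For \eqref{eq:Barlow-HK}, I would invoke Theorem~1 of \cite{barlow2004RWpercolation}, with Proposition~6.1 there supplying the deterministic consequence of $\cL(Q)$. If $t\ge S_x$, then every cube of side about $t^{1/2}$ or larger containing $x$ satisfies $\cL(Q)$, and Proposition~6.1 then yields the Gaussian bounds for $t\ge c_0|x-y|$. The expected main obstacle is matching the walk: Barlow treats the variable-speed or discrete-time walk with a specific normalization. Here the constant-speed walk has generator \eqref{eq:Generator} and reversible measure $\omega_y$, which explains the division by $\omega_y$. Since $1\le\omega_y\le 2d$, the two walks differ only by bounded rate changes, and Barlow's paper covers the constant-speed case directly. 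The remaining step is to replace the Euclidean norm by $|\cdot|_\infty$, which only changes constants. Transience follows by integrating the upper bound in $t$, using $d\ge3$.
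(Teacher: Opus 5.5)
Your plan of reading everything off from Barlow is the same as the paper's. However, the one step that actually needs checking is handled incorrectly: that the threshold in Barlow's Theorem~1 is the $S_x$ of \eqref{eq:S_x}.

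Your bridge, ``if $t\ge S_x$, then every cube of side about $t^{1/2}$ or larger containing $x$ satisfies $\cL(Q)$'', does not follow from \eqref{eq:S_x}. The condition $t\ge S_x$ only gives $\cL(Q)$ for cubes of side strictly larger than $S_x$. If $t^{1/2}<S_x\le t$, the supremum in \eqref{eq:S_x} is attained by a cube of side $S_x>t^{1/2}$ containing $x$ on which $\cL(Q)$ fails. So invoking Theorem~1 as you describe yields Gaussian bounds only for $t$ beyond Barlow's own random variable. Your argument does not show that this threshold is dominated by \eqref{eq:S_x}.

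The missing input is the identification made in the first paragraph of the proof of Barlow's Proposition~6.1: the variables $S_x$ of his Theorem~1 are exactly the $N_x$ of his Lemma~2.24, which is \eqref{eq:S_x} verbatim. Heuristically this works because $\cL(Q)$ for a cube of side $L$ controls all radii $R\in[cL^{1/11},L]$. Hence cubes of side of order $t$, not $t^{1/2}$, already make the balls of radius $\sqrt t$ very good.

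The walk, by contrast, is not an obstacle. Barlow's Theorem~1 is already formulated for the constant-speed walk with generator \eqref{eq:Generator} and density relative to $\omega_y$; a bounded change of jump rates would not by itself transfer pointwise heat kernel bounds anyway. Only $|\cdot|_1$ has to be replaced by $c|\cdot|_\infty$.

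For the measurability claim, you argue from the informal consequence of $\cL(Q)$ (very good boxes $B(x,\tfrac32R)$) rather than from its definition. Very-goodness in Barlow's Definition~1.7 refers to graph balls of $\cC_\infty$ and a weak Poincar\'e inequality on an enlarged ball, so it is not evidently determined by the edges in a fixed box. Your bookkeeping is also left conditional: with centres in an enlarged $Q$ and $R$ up to $L$, the boxes $B(x,\tfrac32R)$ are in fact not contained in $y+\{-L,\dots,2L-1\}^d$.

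The paper instead checks the actual constituents. The events $H(Q,\alpha_2)$ and $D(Q,\alpha_2)$ defining $\cL(Q)$ are $\cF_{E(Q^+)}$-measurable. The chain event $\cL(Q,m,x_0,x_1)$ in Barlow's (2.31) is measurable with respect to $E(y+\{-L,\dots,2L-1\}^d)$.

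Your derivation of \eqref{eq:S_x-decay} by a union bound over the $L^d$ cubes (which is just the proof of Lemma~2.24) is fine. Your construction of a shift-invariant $\Omega_0$ is also fine.
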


\begin{proof}
    These results are from \cite{barlow2004RWpercolation}. We give some details and specify the exact locations below for the reader's convenience. 
    
    To see \eqref{eq:LQ-measurable}, one first notes, by a careful inspection of the constructions in \cite[Section~2]{barlow2004RWpercolation}, that the defining events $H(Q,\alpha_2)$ and $D(Q,\alpha_2)$ of $\cL(Q)$ (see above \cite[Theroem~2.23]{barlow2004RWpercolation}) are $\cF_{E(Q^+)}$-measurable. This fact was also used in the proof of Lemma~2.21 in the same reference. Therefore, by the definition of $\cL(Q,m,x_0,x_1)$ in \cite[(2.31)]{barlow2004RWpercolation}, the remaining event entering the definition of $\cL(Q)$ is $\cF_{E(y+\{-L,\dots,2L-1\}^d)}$-measurable. 
    
    We turn to \eqref{eq:Barlow-HK} and \eqref{eq:S_x-decay}. It is clear by~\eqref{eq:tau-RW} that one can choose $\tau_z^{-1}(\Omega_0)=\Omega_0$. As we can see from the first paragraph in the proof of \cite[Proposition~6.1]{barlow2004RWpercolation}, the random variables $(S_x)_{x\in \bbZ^d}$ in the statement of Theorem~1 therein are exactly the explicitly defined random variables $(N_x)_{x\in \bbZ^d}$ in \cite[Lemma~2.24]{barlow2004RWpercolation}, which are recalled here in \eqref{eq:S_x}.  Hence, the statement \eqref{eq:Barlow-HK} follows by \cite[Theorem~1]{barlow2004RWpercolation} (with $|\cdot |_1$ replaced by $c|\cdot |_\infty$), and the statement \eqref{eq:S_x-decay} follows by \cite[Lemma~2.24]{barlow2004RWpercolation}. Finally, note that the exponents $\beta$ and $\alpha_2$ appearing in \cite[(2.34)]{barlow2004RWpercolation} are defined in (2.2) and preceding Theorem~2.23 therein, respectively, which leads to the explicit formula of $\eta_{\mathrm{hk}}$ in \eqref{eq:S_x-decay}. 
\end{proof}

In the following, we will always assume that $\omega \in \Omega_0$ (see~\eqref{eq:Barlow-HK}). 
We define for $x,y \in \bbZ^d$ the Green function of the random walk on $\cC_\infty$ (extended by zero to $\bbZ^d$) by
\begin{equation}\label{eq:Green-F}
g^\omega(x,y) = 
\left\{
\begin{aligned}
&\frac{1}{\omega_y}E_x^\omega \left[\int_0^\infty \IND_{\{X_t = y \}} \mathrm{d}t \right] = \int_0^\infty \frac{1}{\omega_y}P_x^\omega[X_t=y] \mathrm{d}t\,, \quad &&\text{if $x,y\in \cC_\infty$,} \\
&0\,, &&\text{otherwise,}
\end{aligned}\right. 
\end{equation} 
which is finite for $\omega \in \Omega_0$ by~\eqref{eq:Barlow-HK} and symmetric. In particular, $X$ is transient on $\cC_\infty$. We also have by \eqref{eq:tau-RW} and $\Omega_0=\tau_x^{-1}(\Omega_0)$ that 
\begin{equation}\label{eq:tau-Green}
    g^{\tau_z\omega}(x,y) = g^\omega(x+z,y+z). 
\end{equation} 
We turn to consequences of \eqref{eq:Barlow-HK} for the Green function. 

\begin{lemma}[\cite{barlow2009}, (6.17), Proposition~6.2]
    \label{lem:Green-function-estimate} We have the following on-diagonal and off-diagonal estimates involving $(S_x)_{x\in \bbZ^d}$ (see~\eqref{eq:S_x}) for $\omega \in \Omega_0$: 
    \begin{equation}\label{eq:on-diag-Green}
        \begin{minipage}{0.9\textwidth}
            For all $x\in \cC_\infty$, 
        \begin{equation*}
            g^\omega(x,x) \leq c_7 S_x. 
        \end{equation*}
        \end{minipage}
    \end{equation}
    \begin{equation}\label{eq:off-diag-Green}
        \begin{minipage}{0.9\textwidth}
            For $x,y\in \cC_\infty$ with $|x-y|_\infty \geq c_8(S_x\wedge S_y)$,
        \begin{equation*}
        \frac{c_9}{|x-y|_\infty^{d-2}} \leq g^\omega (x,y) \leq \frac{c_{10}}{|x-y|_\infty^{d-2}}.
        \end{equation*}
        \end{minipage}
    \end{equation}
\end{lemma}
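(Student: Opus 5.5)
We derive both estimates from the heat kernel bounds \eqref{eq:Barlow-HK} by integrating in time in \eqref{eq:Green-F}. Small times, where \eqref{eq:Barlow-HK} is not available, are handled with trivial bounds on $P^\omega_x[X_t=y]$. Throughout, $\omega\in\Omega_0$ and $x,y\in\cC_\infty$, so that $1\le \omega_y\le 2d$.

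\emph{On-diagonal bound \eqref{eq:on-diag-Green}.} We split
\[
g^\omega(x,x)=\int_0^{S_x\vee 1}\frac{1}{\omega_x}P^\omega_x[X_t=x]\,\mathrm{d}t+\int_{S_x\vee 1}^\infty \frac{1}{\omega_x}P^\omega_x[X_t=x]\,\mathrm{d}t .
\]
On the first interval we use $P^\omega_x[X_t=x]\le 1$ and $\omega_x\ge 1$, which gives a contribution of at most $S_x\vee 1$. On the second interval we have $t\ge S_x = S_x\vee c_0|x-x|_\infty$, so \eqref{eq:Barlow-HK} gives the integrand bound $c_3t^{-d/2}$. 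Since $d\ge3$, this integrates to at most $C$. Hence $g^\omega(x,x)\le S_x+C$. When $S_x\ge1$ this is at most $c_7S_x$. The case $S_x=0$ is the one point needing care, because the claimed bound $c_7S_x$ would then be $0$. In that case \eqref{eq:Barlow-HK} holds for all $t\ge0$, so $g^\omega(x,x)\le C$. I would therefore read the statement with $S_x\vee1$ in place of $S_x$, which is harmless and presumably the intended convention. (Otherwise one notes that $\cL(Q)$ fails for trivially small cubes, so that $S_x\ge1$ always.)

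\emph{Off-diagonal bound \eqref{eq:off-diag-Green}.} By symmetry of $g^\omega$ we may assume $S_x\le S_y$. Set $r=|x-y|_\infty$ and choose $c_8\ge c_0$ large, so that $r\ge c_8S_x$ implies $T:=S_x\vee c_0 r = c_0r$.

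For the upper bound, the range $t\ge T$ is controlled by \eqref{eq:Barlow-HK}:
\[
\int_T^\infty c_3t^{-d/2}e^{-c_4r^2/t}\,\mathrm{d}t\le \int_0^\infty c_3t^{-d/2}e^{-c_4r^2/t}\,\mathrm{d}t = C r^{2-d},
\]
where the equality follows from the substitution $t=r^2s$. For $t<T=c_0r$ the walk has to make at least $r$ jumps in time $c_0r$. We bound this by the Poisson tail
\[
P[\mathrm{Poi}(c_0r)\ge r]\le e^{-cr\log(1/(ec_0))}\le e^{-cr},
\]
provided $c_0<e^{-1}$. If $c_0$ is not that small, we would instead compare with the $\ell_1$-version of the bound, using $|x-y|_1\le dr$ and the usual Carne–Varopoulos estimate $P_x[X_t=y]\le Ce^{-r^2/(Ct)}$ valid for $t\ge r$. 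The range $t\le r/C'$ is then handled by the Poisson tail. In either case the small-time contribution is at most $Cre^{-cr}\le Cr^{2-d}$.

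For the lower bound we restrict the integral to $t\in[r^2,2r^2]$. Since $r^2\ge T$ for $r$ large, \eqref{eq:Barlow-HK} gives an integrand at least $c_1(2r^2)^{-d/2}e^{-c_2}$, and integrating over an interval of length $r^2$ gives $c\,r^{2-d}$. The finitely many small values of $r$ are absorbed into the constants through the choice of $c_8$; alternatively, if $S_x\ge1$ always holds, they are excluded automatically.

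I expect the main obstacle to be the short-time regime in the upper bound. There \eqref{eq:Barlow-HK} gives no control, and one needs a uniform large-deviation bound relating $c_0$ to the Poisson jump rate. This step may require adjusting $c_0$ and $c_8$, or invoking \cite[Proposition~6.2]{barlow2009} directly.
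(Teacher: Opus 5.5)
Your argument is correct. The paper gives no proof of its own and simply imports these bounds from Barlow--Hambly, where they come from essentially this time integration of the heat-kernel bounds \eqref{eq:Barlow-HK}: the trivial bound $P^\omega_x[X_t=x]\le 1$ for $t\le S_x$ on the diagonal, and a Davies/Carne--Varopoulos-type off-diagonal estimate for $t\lesssim |x-y|_\infty$.

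A few points to tidy. The implication $r\ge c_8S_x\Rightarrow S_x\le c_0r$ requires $c_8\ge c_0^{-1}$, not merely $c_8\ge c_0$. The Poisson-averaged (continuous-time) Carne--Varopoulos bound already gives $P^\omega_x[X_t=y]\le Ce^{-cr}$ uniformly for all $t\le c_0r$, so the window between $r/C'$ and $r$, left implicit in your sketch, needs no separate treatment. Finally, when $S_x=0$ the diagonal bound fails as literally stated, and this is exactly what your $S_x\vee 1$ reading repairs. The off-diagonal lower bound at $x=y$ is also vacuous-false as stated, so $x\neq y$ must be excluded there, which you assume implicitly.
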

\medskip

We turn to some potential theoretic notions related to the random walk on $\cC_\infty$. 
For a non-empty finite subset $K$ of $\cC_\infty$, we define the equilibrium measure of $K$ on $\cC_\infty$ by
\begin{equation}\label{eq:equilibrium}
  e_K^\omega(x) = \omega_x P_x^\omega [\widetilde{H}_K=\infty]\IND_K(x), \quad \text{for $x\in \cC_\infty$},
\end{equation}
denote its (finite) total mass, the capacity of $K$, by
\begin{equation}\label{eq:capacity}
  \capa^\omega(K) = \sum_{x\in K} e_K^\omega(x), 
\end{equation}
and define the normalized equilibrium measure of $K$ on $\cC_\infty$ by 
\begin{equation}\label{eq:normalized-equilibrium}
    \widetilde{e}_K^\omega(x) = \frac{e_K^\omega(x)}{\capa^\omega(K)}\quad \text{for $x\in \cC_\infty$}. 
\end{equation}
If $\omega\equiv 1$, we drop $\omega$ from the notation, and note that the definitions in~\eqref{eq:equilibrium}-\eqref{eq:normalized-equilibrium} coincide with the corresponding quantities defined in terms of the classical simple random walk on $\bbZ^d$. The same convention also will also be used for other expressions involving $\omega$ associated with the simple random walk (e.g., $g$ represents the Green function on $\bbZ^d$ and $P_x$ represents the law of a simple random walk on $\bbZ^d$, starting from $x \in \mathbb{Z}^d$). 

We record the last-exit decomposition (see Proposition~7.2 of \cite{barlow2017heatkernel})
\begin{equation}\label{eq:FED}
    P^\omega_x [H_K<\infty] = \sum_{y\in K} g^\omega(x,y) e^\omega_K(y) .
\end{equation}
The capacity of a finite subset $K\subseteq \cC_\infty$ can also be characterized by either (see e.g.,~\cite[(7.3)]{barlow2017heatkernel})
\begin{equation}\label{eq:capacity-variational-1}
    \begin{split}
      \capa^\omega(K) &= \max \Big\{\nu[K]: \nu \in [0,\infty)^K, \sum_{y\in K} g^\omega(x,y)\nu_y \leq 1 \text{ for all $x\in K$}\Big\} \\
      &= \min \Big\{\nu[K]: \nu \in [0,\infty)^K, \sum_{y\in K} g^\omega(x,y)\nu_y \geq 1 \text{ for all $x\in K$}\Big\}  ,
    \end{split}
\end{equation}
or (see, e.g.,~\cite[(3.12)]{drewitz2025} and \cite[(2.1)]{teixeira2009interlacement})
\begin{equation}\label{eq:capacity-variational-2}
    \begin{split}
        \capa^\omega(K)&= 
        (\inf \{\sum_{x,y\in K}\nu_x g^\omega(x,y)\nu_y:\, \nu \text{ probability measure on $K$}\})^{-1}\\
    &=\inf\{\cE^\omega(f,f):\,f \text{ finitely supported and } f\geq 1 \text{ on $K$}\},
    \end{split}
\end{equation}
where 
\begin{equation}
        \cE^\omega(f,f) = \frac{1}{2} \sum_{x,y\in \cC_\infty} \omega_{\{x,y\}} (f(y)-f(x))^2 \quad f: \bbZ^d \to \bbR.
    \end{equation}
It follows by picking $\nu\equiv (\min_{x\in K}\sum_{y\in K} g^\omega(x,y))^{-1}$ or $\nu\equiv (\max_{x\in K}\sum_{y\in K} g^\omega(x,y))^{-1}$ in the maximum and minimum characterizations of the capacity in~\eqref{eq:capacity-variational-1} respectively that 
\begin{equation}\label{eq:capacity-bounds}
    \frac{|K|}{\max_{x\in K} \sum_{y\in K} g^\omega(x,y)} \leq \capa^\omega (K) \leq  \frac{|K|}{\min_{x\in K} \sum_{y\in K} g^\omega(x,y)}. 
\end{equation}

\begin{lemma}\label{lem:monotone-capacity}
    Let $K$ be a finite subset of $\mathbb{Z}^d, d\geq 3$. Then for $\omega \in \Omega_0$, 
    \begin{equation}\label{eq:capacity-monotone}
        \capa^\omega(K \cap \cC_\infty) \leq \capa(K).
    \end{equation}
\end{lemma}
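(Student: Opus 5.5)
We plan to use the Dirichlet-form characterization of capacity in \eqref{eq:capacity-variational-2}, which turns the comparison into a monotonicity statement for energies. If $K\cap\cC_\infty=\varnothing$ we interpret the left-hand side as $0$ and there is nothing to prove, so assume $K\cap \cC_\infty\neq \varnothing$.

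Let $f:\bbZ^d\to\bbR$ be finitely supported with $f\geq 1$ on $K$. The same $f$ is then admissible in the variational problem on the cluster for the set $K\cap\cC_\infty$, since $f\geq 1$ on $K\cap\cC_\infty$. This gives
\[
\capa^\omega(K\cap\cC_\infty)\leq \cE^\omega(f,f)=\frac12\sum_{x,y\in\cC_\infty}\omega_{\{x,y\}}(f(y)-f(x))^2 .
\]
Because $\omega_{\{x,y\}}\in\{0,1\}$ vanishes unless $x\sim y$, every term of this sum also appears in the Dirichlet form of the simple random walk on $\bbZ^d$, namely
\[
\cE(f,f)=\frac12\sum_{x,y\in\bbZ^d,\,x\sim y}(f(y)-f(x))^2 ,
\]
and all terms are nonnegative. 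Hence $\cE^\omega(f,f)\leq\cE(f,f)$.

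Taking the infimum over such $f$ and applying \eqref{eq:capacity-variational-2} with $\omega\equiv1$, which by the paper's convention gives the classical capacity, we obtain $\capa^\omega(K\cap\cC_\infty)\le\capa(K)$.

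The one point needing care is normalization. The generator \eqref{eq:Generator} has speed measure $\omega_x$, and the equilibrium measure \eqref{eq:equilibrium} carries the factor $\omega_x$. We must check that \eqref{eq:capacity-variational-2} holds with exactly this normalization both on $\cC_\infty$ and on $\bbZ^d$, where the latter means $\omega\equiv1$ and $\omega_x=2d$. The constants then agree, since in both cases the conductances are $\omega_{\{x,y\}}$ and the Green function is normalized by $\omega_y$. As this is the cited identity, no further obstacle arises.
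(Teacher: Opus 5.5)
Your proposal is correct and follows essentially the same argument as the paper: test the Dirichlet-form characterization \eqref{eq:capacity-variational-2} on the cluster with an arbitrary finitely supported $f\geq 1$ on $K$, bound $\cE^\omega(f,f)$ by the Dirichlet form on $\bbZ^d$ using $\omega_{\{x,y\}}\leq \IND_{\{x\sim y\}}$, and take the infimum. Your extra remarks on the case $K\cap\cC_\infty=\varnothing$ and on normalization are harmless additions that the paper leaves implicit.
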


\begin{proof} 
    Due to the variational characterization~\eqref{eq:capacity-variational-2}, 
    \begin{equation}
        \capa^\omega(K\cap \cC_\infty) = \inf\{\cE^\omega(f,f):\,f \text{ finitely supported and } f\geq 1 \text{ on $K\cap \cC_\infty$}\}.
    \end{equation}
    For any finitely supported function $f \geq 1$ on $K$ (in particular, $f\geq 1$ on $K\cap \cC_\infty$),
    \begin{equation}
        \capa^\omega(K\cap \cC_\infty) \leq \cE^\omega(f,f) \stackrel{\omega_{\{x,y\}}\leq \IND_{\{x\sim y\}}}{\leq} \frac{1}{2} \sum_{x\sim y\in \bbZ^d} (f(y)-f(x))^2.
    \end{equation}
    Therefore by taking infimum and \eqref{eq:capacity-variational-2} with $\omega\equiv 1$, we obtain $\capa^\omega(K\cap \cC_\infty) \leq \capa(K)$. 
\end{proof}

\subsection{Gaussian free field on \texorpdfstring{$\cC_\infty$}{C\_infinite}}

For $\omega \in \Omega_0$, we recall from the paragraph around~\eqref{eq:GFF-cov} the canonical coordinates $(\varphi_x)_{x\in \bbZ^d}$ and the law $\bbP_\omega^G$ on $(\bbR^{\bbZ^d},\cF^G)$ that governs the GFF on $\cC_\infty$. We also recall the level sets $E_\omega^{\geq \alpha}$ for $\alpha\in \bbR$ in~\eqref{eq:level-set}. Note that the definition depends on both sources of randomness $\omega$ and $\varphi$, and thus we sometimes write $E_\omega^{\geq\alpha}(\varphi)$. 

\begin{lemma}\label{lem:GFF-capa}
    Let $\omega \in \Omega_0$ and $K$ be a finite subset of $\cC_\infty$. Then for $\alpha >0 $
    \begin{equation}\label{eq:GFF-capa}
        \bbP^G_\omega[K\subseteq E^{\geq \alpha}] \leq \exp\Big\{-\tfrac{\alpha^2}{2} \capa^\omega(K)\Big\}. 
    \end{equation}
\end{lemma}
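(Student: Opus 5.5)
We reduce the event $\{K\subseteq E^{\geq\alpha}\}$ to a tail event for one Gaussian variable obtained by averaging the field over $K$ against a well-chosen probability measure. For a probability measure $\nu$ on $K$, set $Y_\nu=\sum_{x\in K}\nu_x\varphi_x$. On $\{K\subseteq E^{\geq\alpha}\}$ every $\varphi_x\geq\alpha$ for $x\in K$, so $Y_\nu\geq\alpha$. Under $\bbP^G_\omega$, $Y_\nu$ is a centered Gaussian with variance
\[
\sigma_\nu^2=\sum_{x,y\in K}\nu_x g^\omega(x,y)\nu_y ,
\]
by~\eqref{eq:GFF-cov}, using that $K\subseteq\cC_\infty$. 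The standard Gaussian tail bound $P[N\geq t]\leq e^{-t^2/2}$ for a standard normal $N$ and $t\geq0$ then gives, since $\alpha>0$,
\[
\bbP^G_\omega[K\subseteq E^{\geq\alpha}]\leq \bbP^G_\omega[Y_\nu\geq\alpha]\leq \exp\Big\{-\frac{\alpha^2}{2\sigma_\nu^2}\Big\}.
\]

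It remains to optimize over $\nu$. By the first line of~\eqref{eq:capacity-variational-2}, $\inf_\nu\sigma_\nu^2=\capa^\omega(K)^{-1}$. The infimum is attained, since the probability measures on the finite set $K$ form a compact simplex and $\nu\mapsto\sigma_\nu^2$ is continuous. Alternatively, one can take the normalized equilibrium measure $\nu=\widetilde e^\omega_K$. The last-exit decomposition~\eqref{eq:FED} gives $\sum_{y\in K}g^\omega(x,y)e^\omega_K(y)=1$ for every $x\in K$, so $\sigma_\nu^2=\capa^\omega(K)^{-1}$ exactly. Substituting this into the bound yields~\eqref{eq:GFF-capa}.

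There is no genuine obstacle. The only points to check are that the Gaussian tail bound is used with $\alpha>0$, and that $g^\omega$ is finite on $\Omega_0$, so that $\capa^\omega(K)\in(0,\infty)$ and both the variance computation and the variational identity apply.
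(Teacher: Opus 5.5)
Your proof is correct and is essentially the paper's argument. The paper tests the field against the unnormalized equilibrium measure: it computes $\var^G_\omega[\sum_{x\in K}\varphi_x e^\omega_K(x)]=\capa^\omega(K)$ via the last-exit decomposition~\eqref{eq:FED} and then applies the Chernoff bound for a standard Gaussian, which is your choice $\nu=\widetilde e^\omega_K$ up to normalization. Your alternative of optimizing over all probability measures $\nu$ via~\eqref{eq:capacity-variational-2} gives the same constant.
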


\begin{proof}
    Note that 
    \begin{equation}
        \var^G_\omega \Big[ \sum_{x\in K} \varphi_x e^\omega_K(x) \Big] = \sum_{x,y\in K} g^\omega(x,y) e_K^\omega(x) e_K^\omega(y) \overset{\eqref{eq:FED}}{=} \capa^\omega (K).
    \end{equation}
    We then have 
    \begin{equation}
        \bbP^G_\omega [K\subseteq E^{\geq \alpha}] \leq \bbP^G_\omega \Big[ \sum_{x\in K} \varphi_x e^\omega_K(x) \geq \alpha \capa^\omega (K) \Big] \leq\exp\Big\{-\tfrac{\alpha^2}{2} \capa^\omega(K)\Big\},
   \end{equation}
    where we used that for a standard Gaussian random variable $\psi$, it holds $\mathsf{P}[\psi \geq x] \leq \min_{\lambda > 0} e^{-\lambda x} \mathsf{E}[e^{\lambda \psi}]=\min_{\lambda > 0} e^{-\lambda x+x^2/2} = \exp(-x^2/2)$ for $x>0$, as the minimum is attained at $\lambda =x$.
\end{proof}

We consider the group of space-shifts on $\bbR^{\bbZ^d}$:
\begin{equation}
\label{eq:Def-t-x}
    t_x: \bbR^{\bbZ^d} \to \bbR^{\bbZ^d}, \quad \varphi_{\cdot} \mapsto \varphi_{\cdot +x} \quad \text{for $x\in \bbZ^d$}. 
\end{equation}
For $\omega \in \Omega_0$, it follows from Gaussianity, $\tau_x^{-1}(\Omega_0)=\Omega_0$ (see \eqref{eq:Barlow-HK}), and \eqref{eq:tau-Green} that for all bounded measurable functions $f:\bbR^{\bbZ^d} \to \bbR$, 
\begin{equation}\label{eq:shift-t}
    \text{$\bbE^G_\omega[f(t_x\varphi)] = \bbE^G_{\tau_x\omega}[f(\varphi)]$ for all $x\in \bbZ^d$}.
\end{equation}

\begin{lemma}\label{lem:quenched-mixing}
    It holds for $\bbQ_p$-a.e.~$\omega$ that if $H,K$ are finite non-empty subsets of $\bbZ^d$ and $A\in \sigma((\varphi_y)_{y\in H})$, $B\in \sigma((\varphi_y)_{y\in K})$, then for all $x\in \bbZ^d$ such that $d(H,x+K)\geq \sup_{y\in H} S_y$ it holds
    \begin{equation}\label{eq:quenched-mixing}
    \Big| \bbP^G_\omega[A\cap t_x^{-1}(B)] -   \bbP^G_\omega[A]  \bbP^G_\omega[t_x^{-1}(B)]\Big| \leq c \frac{\sqrt{\capa(H)\capa(K)}}{ d(H,x+K)^{d-2}}.
   \end{equation}
   In particular, for $\bbQ_p$-a.e.~$\omega$
   \begin{equation}\label{eq:quenched-mixing2}
   \lim_{x\to\infty} \Big| \bbP^G_\omega[A\cap t_x^{-1}(B)] -   \bbP^G_\omega[A]  \bbP^G_\omega[t_x^{-1}(B)]\Big| = 0.
   \end{equation}
\end{lemma}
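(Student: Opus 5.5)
We prove \eqref{eq:quenched-mixing} with a Gaussian interpolation (conditional decomposition) argument for jointly Gaussian vectors. Fix $\omega \in \Omega_0$ and write $K' = x+K$. By \eqref{eq:shift-t}, the event $t_x^{-1}(B)$ lies in $\sigma((\varphi_y)_{y\in K'})$. So we must bound $|\mathrm{Cov}(\IND_A,\IND_{B'})|$ for events $A$ measurable in the field on $H$ and $B'$ measurable in the field on $K'$. Since $\varphi$ vanishes off $\cC_\infty$, we may replace $H,K'$ by $H\cap\cC_\infty$, $K'\cap\cC_\infty$; if either is empty the covariance is zero.

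\textbf{Step 1: decomposition.} Decompose $\varphi_{K'} = Z + W$, where $Z = \mathbb{E}^G_\omega[\varphi_{K'}\,|\,\varphi_H]$ and $W$ is independent of $\varphi_H$. Use the standard Gaussian coupling: with $\widetilde W$ an independent copy,
\begin{equation*}
\bbP[A\cap B'] - \bbP[A]\bbP[B'] = \mathbb{E}\big[\IND_A(\varphi_H)\big(\IND_{B'}(Z+W) - \IND_{B'}(\widetilde Z + W)\big)\big],
\end{equation*}
where $\widetilde Z$ is $Z$ computed from an independent copy of $\varphi_H$. Bounding this by a total variation distance between Gaussian laws of differing means is awkward for arbitrary events. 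A cleaner route is to use the variance bound instead. Let $\mu = e^\omega_{H}$ be the equilibrium measure. By the sweeping (harmonic extension) identity, $Z_y = \sum_{z\in H} P^\omega_y[X_{H_H}=z,H_H<\infty]\,\varphi_z$. One then checks that $\mathrm{Var}(Z_y) \le P_y^\omega[H_H<\infty]^2\cdot\max$, and more sharply, via \eqref{eq:FED}, that $\mathrm{Var}(Z_y)\le \sum_{z,z'\in H} g^\omega(y,z)e_H(z)\ldots$. To keep the argument simple, instead use the \emph{energy} form: the Gaussian vector $\varphi_{K'}$ has the same law as $\varphi_{K'}$ given $\varphi_H$ shifted by $Z$.

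\textbf{Step 2: total variation bound.} The cleanest is the known bound
\begin{equation*}
|\mathrm{Cov}(\IND_A,\IND_{B'})| \le C\,\|\Sigma_{HH}^{-1/2}\Sigma_{HK'}\Sigma_{K'K'}^{-1/2}\|_{\mathrm{HS}},
\end{equation*}
valid for jointly Gaussian vectors when the canonical correlation is small. This follows from Pinsker's inequality applied to the relative entropy between the joint law and the product law, which equals $-\tfrac12\log\det(I-M M^{\!\top})$ for $M=\Sigma_{HH}^{-1/2}\Sigma_{HK'}\Sigma_{K'K'}^{-1/2}$. Hence we need $\|M\|_{\mathrm{HS}}^2 = \mathrm{tr}\,(\Sigma_{HH}^{-1}\Sigma_{HK'}\Sigma_{K'K'}^{-1}\Sigma_{K'H})$. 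Note $\Sigma_{HH}^{-1}\mathbf 1$ is essentially the equilibrium measure, and $\sum_{y,y'} (\Sigma^{-1})_{yy'}$ equals the capacity. Bound entrywise: for $y\in H$, $y'\in K'$, the distance assumption $d(H,K')\ge \sup_{y\in H}S_y$ gives, by \eqref{eq:off-diag-Green} (adjusting the constant $c_8$, or using it with $S_y$ since $S_y\wedge S_{y'}\le S_y$), $g^\omega(y,y')\le c_{10} d(H,K')^{2-d}$. Therefore $\Sigma_{HK'}\le c\,d^{2-d}\,\mathbf 1\mathbf 1^{\!\top}$ entrywise with nonnegative entries. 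Since the inverse Green matrices are not entrywise nonnegative, bound instead via $\mathrm{tr}(\Sigma_{HH}^{-1}\Sigma_{HK'}\Sigma_{K'K'}^{-1}\Sigma_{K'H}) = \sup$-type variational forms. By Cauchy–Schwarz in the $\Sigma_{HH}^{-1}$ inner product and a rank-one majorization, this is at most $c\,d^{2(2-d)}\,(\mathbf 1^{\!\top}\Sigma_{HH}^{-1}\mathbf 1)(\mathbf 1^{\!\top}\Sigma_{K'K'}^{-1}\mathbf 1) = c\,d^{2(2-d)}\capa^\omega(H)\capa^\omega(K')$. The last identity uses \eqref{eq:capacity-variational-2}, namely $\mathbf 1^{\!\top}\Sigma^{-1}\mathbf 1 = \capa$. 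Then Lemma~\ref{lem:monotone-capacity} together with translation invariance of $\capa$ on $\bbZ^d$ gives the claimed $\sqrt{\capa(H)\capa(K)}/d(H,x+K)^{d-2}$. For $\|M\|$ not small, the bound is trivial after enlarging $c$, since covariances are bounded by $1$.

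\textbf{Main obstacle.} The hard step is the rank-one majorization, since $\Sigma_{HK'}$ is only dominated entrywise. I would handle it by writing $g^\omega(y,y') = \sum_{z\in K'} P_y[H_{K'}<\infty, X_{H_{K'}}=z]\,g^\omega(z,y')$. This gives $\Sigma_{HH}^{-1}\Sigma_{HK'}\Sigma_{K'K'}^{-1} = \Sigma_{HH}^{-1}\,\mathrm{Hit}_{H\to K'}$, where $\mathrm{Hit}$ is the hitting-distribution matrix. Its trace against $\Sigma_{K'H}$ can then be controlled by \eqref{eq:FED} and the off-diagonal bound: $P_y[H_{K'}<\infty]\le c\,\capa^\omega(K')\,d^{2-d}$. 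The second statement \eqref{eq:quenched-mixing2} is immediate, since $S_y<\infty$ on $\Omega_0$, $H,K$ are fixed, and $d(H,x+K)\to\infty$.
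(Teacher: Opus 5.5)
Your route differs from the paper's. The paper bounds the covariance by the maximal correlation between the linear spans of $(\varphi_y)_{y\in H}$ and $(\varphi_y)_{y\in K'}$ (the Popov--R\'ath step), which involves only the \emph{largest} canonical correlation $\|M\|_{\mathrm{op}}$. You go through Pinsker and the Gaussian mutual information. That reduction is correct, but it forces you to control \emph{all} canonical correlations: $\|M\|_{\mathrm{HS}}^2=\mathrm{tr}(\Sigma_{HH}^{-1}D)$ with $D=\Sigma_{HK'}\Sigma_{K'K'}^{-1}\Sigma_{K'H}$, i.e.\ $D(y,y')=E^\omega_y[g^\omega(X_{H_{K'}},y');H_{K'}<\infty]$.

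\textbf{The gap.} The decisive estimate $\mathrm{tr}(\Sigma_{HH}^{-1}D)\le c\,\varepsilon^2\capa^\omega(H)\capa^\omega(K')$, with $\varepsilon=\sup_{y\in H,u\in K'}g^\omega(y,u)$, is asserted but never proved. The ``rank-one majorization'' cannot follow from entrywise domination, because $\Sigma_{HH}^{-1}$ has negative off-diagonal entries. For instance, the positive semidefinite matrix $aI$ satisfies $0\le aI\le a\mathbf 1\mathbf 1^{\top}$ entrywise, yet $\mathrm{tr}(\Sigma_{HH}^{-1}aI)\ge a\sum_{y\in H}g^\omega(y,y)^{-1}$. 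For a box in $\mathbb{Z}^d$ this is of order $a|H|$, not $a\capa(H)$.

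Your repair via $\mathrm{Hit}_{H\to K'}$ only rewrites the same $D$. The inputs you name, \eqref{eq:FED} and $P^\omega_y[H_{K'}<\infty]\le c\varepsilon\capa^\omega(K')$, are again entrywise bounds on $D$. Writing $\Sigma_{HH}^{-1}=\mathrm{diag}(\omega)(I-P^H)$, with $P^H\ge 0$ the kernel of the trace of $X$ on $H$, they give at best $c|H|\varepsilon^2\capa^\omega(K')$. Step 1 is incomplete and is not used afterwards.

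\textbf{Two ways to close it.}

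(i) Follow the paper. For Gaussian families, the maximal correlation of $\sigma(\varphi_H)$ and $\sigma(\varphi_{K'})$ equals $\|M\|_{\mathrm{op}}$, so $|\mathrm{Cov}(\IND_A,\IND_{B'})|\le\frac14\|M\|_{\mathrm{op}}$. Moreover $\Sigma_{K'K'}^{-1}\Sigma_{K'H}\Sigma_{HH}^{-1}\Sigma_{HK'}=R^{\top}$, with the nonnegative kernel $R(u,u')=\sum_{y\in H}P^\omega_u[X_{H_H}=y]\,P^\omega_y[X_{H_{K'}}=u']$. Hence $\|M\|_{\mathrm{op}}^2$ is at most the maximal row sum of $R$, which is at most $\max_u P^\omega_u[H_H<\infty]\,\max_y P^\omega_y[H_{K'}<\infty]\le\varepsilon^2\capa^\omega(H)\capa^\omega(K')$ by \eqref{eq:FED}.

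(ii) Keep your Hilbert--Schmidt route, but use harmonicity rather than entrywise sizes.
\begin{enumerate}
\item The map $z\mapsto D(z,y)$ is harmonic off $K'$. Optional stopping at $\widetilde H_H\wedge H_{K'}$ gives $\mathrm{tr}(\Sigma_{HH}^{-1}D)\le\varepsilon\sum_{y\in H}\omega_yP^\omega_y[H_{K'}<\widetilde H_H]$.
\item Path reversal turns this sum into $\sum_{u\in K'}\omega_uP^\omega_u[H_H<\widetilde H_{K'}]$.
\item A last-exit decomposition at the last visit to $K'$ before $H_H$, integrated against $e^\omega_{K'}$, bounds that sum by $\varepsilon\capa^\omega(H)\capa^\omega(K')/(1-\varepsilon^2\capa^\omega(H)\capa^\omega(K'))$.
\end{enumerate}

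Without one of these arguments, the central inequality of your proof is missing.
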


\begin{proof} As a first observation, if either $H\cap \cC_\infty = \varnothing$ or $(x+K)\cap \cC_\infty = \varnothing$, then one of $\bbP^G_\omega[A]$ or $\bbP^G_\omega[t_x^{-1}(B)]$ belongs to $\{0,1\}$ and thus \eqref{eq:quenched-mixing} follows. We therefore assume without loss of generality that $\overline{H} = H\cap \cC_\infty$ and $\overline{K}_x = (x+K)\cap \cC_\infty$ are both non-empty.

It holds from the proof of Theorem 1.1 in~\cite{popov2015transition} (see~(3.1)--(3.4) therein) that 
\begin{equation}\label{eq:sup}
    \Big| \bbP^G_\omega[A\cap t_x^{-1}(B)] -   \bbP^G_\omega[A]  \bbP^G_\omega[t_x^{-1}(B)]\Big|\leq \sup_{\substack{X\in \bfH,Y\in \bfK_x\\X\neq0,\,Y\neq 0}} \frac{\bbE^G_\omega[XY]}{\sqrt{\bbE^G_\omega[X^2] \bbE^G_\omega[Y^2]}}\,,
\end{equation}
where $\bfH\subseteq L^2(\bbP^G_\omega)$ is the span of $\{\varphi_y\,:\,y\in \overline{H}\}$ and $\bfK_x\subseteq L^2(\bbP^G_\omega)$ is the span of $\{\varphi_y\,:\,y\in \overline{K}_x\}$, having used that $\bbP^G_\omega$-a.s.\ $\varphi_y = 0$ if $y\notin\cC_\infty$. Then, following the steps  from (3.4) and (3.7) in~\cite{popov2015transition}, the right hand side of \eqref{eq:sup} can be bounded by
\begin{equation}
    \sup_{\alpha,\beta} \frac{\sum_{y\in \overline{H}}\sum_{u\in \overline{K}_x} \alpha_y\beta_u\, g^\omega(y,u)}{\sqrt{\sum_{y,z\in \overline{H}} \alpha_y\alpha_z\,g^\omega(y,z) \sum_{u,w\in \overline{K}_x} \beta_u\beta_w\, g^\omega(u,w)}},
\end{equation}
where the sup is taken over $\alpha:\overline{H}\to [0,1]$ and $\beta:\overline{K}_x\to [0,1]$ probabilities on $\overline{H}$ and $\overline{K}_x$ respectively. Note that by~\eqref{eq:capacity-variational-2}
\begin{equation}
   \frac{1}{\capa^\omega(\overline{H})} = \inf_{\alpha} \sum_{y,z\in \overline{H}} \alpha_y\alpha_z\,g^\omega(y,z)\,,\quad \frac{1}{\capa^\omega(\overline{K}_x)} = \inf_{\beta} \sum_{u,w\in \overline{K}_x} \beta_u\beta_w\, g^\omega(u,w)\,.
\end{equation}
Therefore, for all $x\in \bbZ^d$ such that $d(H,x+K)\geq \sup_{y\in H} S_y$, it follows from~\eqref{eq:off-diag-Green} that this quantity is bounded by
\begin{equation}
    \sup_{y\in \overline{H},u\in \overline{K}_x} g^{\omega}(y,u) \sqrt{\capa^\omega(\overline{H})\capa^\omega(\overline{K}_x)}\leq  \frac{ c_{10}\sqrt{\capa^\omega(\overline{H})\capa^\omega(\overline{K}_x)}}{\inf_{y\in \overline{H},u\in \overline{K}_x} |y-u|_\infty^{d-2}}\,.
\end{equation}
Inequality~\eqref{eq:quenched-mixing} then follows from Lemma~\ref{lem:monotone-capacity}. Finally, \eqref{eq:quenched-mixing2} follows from the fact that $\sup_{y\in H}S_y$ is finite $\bbQ_p$-almost surely.
\end{proof}

Let $\bfP^G(d\omega, d\varphi)=\bbQ_p(d\omega) \bbP^G_\omega(d\varphi)$ be the annealed measure on $\Omega_{\mathrm{bond}} \times \bbR^{\bbZ^d}$ endowed with the product $\sigma$-algebra. We define on $\Omega_{\mathrm{bond}} \times \bbR^{\bbZ^d}$ the translation operator 
\begin{equation}
    \Theta_x(\omega, \varphi) = (\tau_x \omega, t_x \varphi) \quad \text{for $x\in \bbZ^d$}. 
\end{equation}

\begin{proposition} 
    The annealed measure $\bfP^G$ is invariant and ergodic under $(\Theta_x)_{x\in \bbZ^d}$.
\end{proposition}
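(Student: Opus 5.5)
Invariance follows directly from \eqref{eq:shift-t} and the shift-invariance of $\bbQ_p$. For a bounded measurable $F$ on $\Omega_{\mathrm{bond}}\times\bbR^{\bbZ^d}$ and $x\in\bbZ^d$, we write
\[
\mathbf{E}^G[F\circ\Theta_x]=\int \bbQ_p(d\omega)\,\bbE^G_\omega[F(\tau_x\omega,t_x\varphi)] = \int \bbQ_p(d\omega)\,\bbE^G_{\tau_x\omega}[F(\tau_x\omega,\varphi)].
\]
The second equality uses \eqref{eq:shift-t} applied to $\varphi\mapsto F(\tau_x\omega,\varphi)$ with $\omega$ frozen, together with $\tau_x^{-1}(\Omega_0)=\Omega_0$. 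Since $\bbQ_p$ is $\tau_x$-invariant, the right-hand side equals $\mathbf{E}^G[F]$. Measurability of $\omega\mapsto\bbE^G_\omega[\cdot]$ holds because the covariance $g^\omega$ is a measurable function of $\omega$, so $\bbP^G_\omega$ is a measurable kernel.

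For ergodicity we prove the stronger property of mixing, namely
\[
\mathbf{E}^G[F\cdot G\circ\Theta_x]\to \mathbf{E}^G[F]\,\mathbf{E}^G[G] \quad\text{as } |x|\to\infty.
\]
By a monotone class argument it suffices to take $F(\omega,\varphi)=f_1(\omega)f_2(\varphi)$ and $G(\omega,\varphi)=h_1(\omega)h_2(\varphi)$, with $f_1,h_1$ bounded and depending on finitely many edges, and $f_2\in\sigma(\varphi_y:y\in H)$, $h_2\in\sigma(\varphi_y:y\in K)$ bounded with $H,K$ finite. Lemma~\ref{lem:quenched-mixing} is stated for events, but it extends to bounded functions by approximating with simple functions, at the cost of a constant factor in the bound. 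Using \eqref{eq:quenched-mixing2} in this form, together with \eqref{eq:shift-t},
\[
\bbE^G_\omega[f_2(\varphi)h_2(t_x\varphi)] = \bbE^G_\omega[f_2]\,\bbE^G_{\tau_x\omega}[h_2] + o_\omega(1),
\]
for $\bbQ_p$-a.e.\ $\omega$. The error term is bounded by $2\|f_2\|_\infty\|h_2\|_\infty$, so dominated convergence gives
\[
\mathbf{E}^G[F\cdot G\circ\Theta_x] = \int \bbQ_p(d\omega)\, f_1(\omega)\bbE^G_\omega[f_2]\cdot h_1(\tau_x\omega)\bbE^G_{\tau_x\omega}[h_2] + o(1).
\]

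This reduces the problem to showing that the functions $a(\omega)=f_1(\omega)\bbE^G_\omega[f_2]$ and $b(\omega)=h_1(\omega)\bbE^G_\omega[h_2]$ are asymptotically decorrelated under the $\bbQ_p$-shifts. Given this, the integral converges to $\bbE_{\bbQ_p}[a]\,\bbE_{\bbQ_p}[b]=\mathbf{E}^G[F]\,\mathbf{E}^G[G]$, which proves mixing and hence ergodicity.

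The main obstacle is that $\bbE^G_\omega[f_2]$ is a non-local function of $\omega$: it depends on $g^\omega$ restricted to $H$, which is determined by the whole cluster. Mixing of $\bbQ_p$ only handles local functions directly, so we use an $L^1(\bbQ_p)$ approximation. Since $\bbQ_p$ is a product measure, it is mixing for all $L^2$ functions: any $a\in L^2(\bbQ_p)$ is an $L^2$-limit of functions depending on finitely many edges (martingale convergence with respect to $\cF_{E(B(0,N))}$). Hence $\int a\cdot b\circ\tau_x\,d\bbQ_p\to\bbE[a]\bbE[b]$ for all bounded measurable $a,b$, and $a,b$ above are bounded. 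In this form no explicit locality of $g^\omega$ is needed, beyond the measurability of $\omega\mapsto\bbE^G_\omega[f_2]$, which we check by writing $\bbP^G_\omega$ restricted to $H$ as a Gaussian with covariance matrix $(g^\omega(y,z))_{y,z\in H}$, measurable in $\omega$.
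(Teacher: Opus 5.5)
Your proposal is correct and follows essentially the paper's route: invariance via \eqref{eq:shift-t} and the shift-invariance of $\bbQ_p$, and ergodicity via mixing on product cylinder sets. As in the paper, the mixing splits into a quenched-covariance term handled by Lemma~\ref{lem:quenched-mixing} with dominated convergence, and a term handled by mixing of the i.i.d.\ field under $\bbQ_p$. Your point that this mixing holds for all bounded measurable functions, which is needed because $\omega\mapsto\bbP^G_\omega[A^2]$ is non-local, spells out a step the paper states only briefly.
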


\begin{proof}
    We first show that $\bfP^G$ is invariant under $\Theta_x$ for $x\in \bbZ^d$: for all $A\in \cF_{\mathrm{bond}} \otimes \cF^G$, 
    \begin{equation}
    \begin{split}
        \bfP^G[\Theta_x^{-1}(A)] & = \int \Big(\int \IND_A(\tau_x\omega,t_x\varphi) \bbP^G_\omega (d\varphi) \Big) \bbQ_p(d\omega)\\
        &\stackrel{\eqref{eq:shift-t}}{=} \int_{\tau_x^{-1}(\Omega_0)} \Big(\int \IND_A(\tau_x \omega, \varphi) \bbP^G_{\tau_x\omega} (d\varphi) \Big) \bbQ_p(d\omega) 
       \\
       &  =\int_{\Omega_0} \Big(\int \IND_A(\omega, \varphi) \bbP^G_{\omega} (d\varphi) \Big) \bbQ_p(d\omega)= \bfP^G[A].  
        \end{split}
    \end{equation}
    We turn to the ergodicity. We first consider $A=A^1\times A^2$ and $B=B^1\times B^2$ with $A^1,B^1\in \cF_{\mathrm{bond}}$ and $A^2,B^2 \in \cF^G$, all depending only on finitely many coordinates. Then 
    \begin{equation}
    \begin{split}
        &\big|\bfP[ A \cap \Theta_x^{-1}(B)] - \bfP[A] \bfP[B]\big| \\
        &\leq \Big|
        \int \IND_{A^1}(\omega) \IND_{B^1}(\tau_x\omega) \left(\bbP_\omega^G[A^2\cap t_x^{-1}(B^2)]-\bbP_\omega^G[A^2]\bbP^G_\omega[t_x^{-1}(B^2)]  \right) \bbQ_p(d\omega)
        \Big| \\
        &\quad +
        \Big|
        \int \IND_{A^1}(\omega) \bbP_\omega^G[A^2] \cdot \IND_{B^1}(\tau_x\omega) \bbP^G_{\tau_x\omega}[B^2] \bbQ_p(d\omega) - \bfP[A] \bfP[B]
        \Big| \\
        &\to 0 \quad \text{as $|x|\to\infty$}, 
            \end{split}
    \end{equation}
    by applying the dominated convergence theorem and~\eqref{eq:quenched-mixing} for the first term, and using the mixing and translation invariance property of an i.i.d.~field of Bernoulli random variables indexed by $\bbZ^d$ for the second term. General events $A$ and $B$ can be treated via an approximation argument.
\end{proof}

The ergodicity of the annealed measure implies that the quenched law of any invariant event has $\bbQ_p$-a.s.~a zero-one law. 

\begin{lemma}\label{lem:0-1-law-GFF}
    If $A$ is invariant under $(\Theta_x)_{x\in \bbZ^d}$, then for $\bbQ_p$-a.e.~$\omega$, $\bbP_\omega^G[A]\in \{0,1\}$. 
    
    In particular, for any $\alpha\in \bbR$, there exists a measurable set $\Omega_1(\alpha)\subseteq \Omega_0$ with $\bbQ_p[\Omega_1(\alpha)]=1$ and $\delta_\alpha \in \{0,1\}$ such that for all $\omega \in \Omega_1(\alpha)$,  
    \begin{equation}
        \bbP^G_\omega\big[\text{$E_\omega^{\geq \alpha}$ contains an infinite cluster\,}\big] =\delta_\alpha. 
    \end{equation}
\end{lemma}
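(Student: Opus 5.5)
The plan is to use the standard consequence of ergodicity: an invariant event has conditional (quenched) probability that is itself an invariant function of $\omega$, and hence constant. Let $A$ be invariant under $(\Theta_x)_{x\in\bbZ^d}$ and define $f(\omega)=\bbP^G_\omega[A]$ for $\omega\in\Omega_0$. This $f$ is measurable in $\omega$, since $\bbP^G_\omega$ is a measurable kernel: it is Gaussian with covariances $g^\omega$, which are measurable in $\omega$. For $x\in\bbZ^d$, invariance gives $\IND_A(\tau_x\omega,t_x\varphi)=\IND_A(\omega,\varphi)$. Applying \eqref{eq:shift-t} to the $\omega$-section of $A$ then yields
\begin{equation*}
f(\tau_x\omega)=\bbE^G_{\tau_x\omega}[\IND_A(\tau_x\omega,\cdot)]=\bbE^G_\omega[\IND_A(\tau_x\omega,t_x\varphi)]=\bbE^G_\omega[\IND_A(\omega,\varphi)]=f(\omega).
\end{equation*}
So $f$ is shift-invariant on $\Omega_0$. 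Since $\bbQ_p$ is an i.i.d.\ product measure, it is ergodic under $(\tau_x)$, and therefore $f$ equals some constant $c$ for $\bbQ_p$-a.e.\ $\omega$.

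Next I show $c\in\{0,1\}$. By the preceding proposition, $\bfP^G$ is ergodic under $(\Theta_x)$, so $\bfP^G[A]\in\{0,1\}$. Since $\bfP^G[A]=\int f\,d\bbQ_p=c$, we get $c\in\{0,1\}$, and hence $\bbP^G_\omega[A]\in\{0,1\}$ for $\bbQ_p$-a.e.\ $\omega$. In fact only the second step is needed: if $\bfP^G[A]=0$ then $f=0$ a.s., and if $\bfP^G[A]=1$ then $f=1$ a.s., because $0\le f\le1$. This avoids the invariance computation entirely.

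For the particular case, fix $\alpha$ and let $A_\alpha=\{(\omega,\varphi):\omega\in\Omega_0,\ E^{\geq\alpha}_\omega(\varphi)\text{ contains an infinite cluster}\}$. This set is measurable, because $\mathcal{C}_\infty$ and the event of an infinite cluster are countable unions and intersections of cylinder-type events in $(\omega,\varphi)$. It is also invariant. Indeed, $\mathcal{C}_\infty(\tau_x\omega)=\mathcal{C}_\infty(\omega)-x$, and $\{y\in\mathcal{C}_\infty(\tau_x\omega):(t_x\varphi)_y\ge\alpha\}=E^{\geq\alpha}_\omega(\varphi)-x$. Since a translate of a set contains an infinite cluster exactly when the set does, $\Theta_x^{-1}(A_\alpha)=A_\alpha$, using $\tau_x^{-1}(\Omega_0)=\Omega_0$. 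Applying the first part, I take $\delta_\alpha=\bfP^G[A_\alpha]$ and $\Omega_1(\alpha)=\{\omega\in\Omega_0:\bbP^G_\omega[A_\alpha]=\delta_\alpha\}$, which is measurable and has full $\bbQ_p$-measure.

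The only delicate point is measurability: of the kernel $\omega\mapsto\bbP^G_\omega$, and of the event of an infinite cluster of $E^{\ge\alpha}$ inside $\mathcal{C}_\infty$. Both are routine checks.
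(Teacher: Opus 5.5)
Your proposal is correct. Once you drop the redundant preliminary step (shift-invariance of $\omega\mapsto\mathbb{P}^G_\omega[A]$ combined with ergodicity of $\mathbb{Q}_p$), it is exactly the paper's argument: annealed ergodicity gives $\mathbf{P}^G[A]\in\{0,1\}$, and $0\le \mathbb{P}^G_\omega[A]\le 1$ then forces the quenched probability to equal this value $\mathbb{Q}_p$-a.s. Your treatment of the infinite-cluster event via $E^{\geq\alpha}_{\tau_x\omega}(t_x\varphi)=E^{\geq\alpha}_\omega(\varphi)-x$ also matches the paper.
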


\begin{proof}
    It follows by ergodicity that $\bfP^G[A]\in \{0,1\}$. Since $\bbP^G_\omega[A]\in [0,1]$ and its $\bbQ_p$-integral is either $0$ or $1$, its value must be $0$ or $1$,~$\bbQ_p$-a.s. 

    The second claim follows by noting that the event \{$E_\omega^{\geq \alpha}$ contains an infinite cluster\} is invariant with respect to $(\Theta_x(\omega, \varphi))_{x\in \bbZ^d}$: Indeed, as
    \begin{equation}
    \begin{split}
        E_{\tau_x\omega}^{\geq \alpha}&(t_x\varphi) = \{y\in \bbZ^d: y\in \cC_\infty(\tau_x\omega), (t_x \varphi)_y \geq \alpha\} \\
        &= \{y\in \bbZ^d:y+x \in \cC_\infty(\omega),\varphi_{y+x} \geq \alpha\} = E_\omega^{\geq \alpha}(\varphi)-x, 
        \end{split}
    \end{equation}
    it is clear that both random subsets contain infinite clusters simultaneously. 
\end{proof}

We briefly compare the Gaussian free field on a typical realization of $\mathcal{C}_\infty$ to that on $\mathbb{Z}^d$, $d \geq 3$, and highlight a potential obstruction in proving the phase transition of $E^{\geq \alpha}_\omega$.
\begin{remark}\label{rmk:difficulty-1}
    By the monotonicity of the capacity (Lemma~\ref{lem:monotone-capacity}) and \eqref{eq:FED}, we see that for $x\in \cC_\infty$,
$g^\omega(x,x) = \frac{1}{\capa^\omega(\{x\})} \geq g(x,x)$.
The increase of the variances when compared to the field on $\mathbb{Z}^d$ can heuristically be related to a general increase of the maximum of the field over large sets: In fact, by \cite[Corollary 1.7]{abe2015effective}, we have $\bbQ_p$-a.s.~for $N$ large enough
\begin{equation}
    c \log N \leq \max_{x\in [0,N)^d \cap \cC_\infty} \varphi_x \leq C \log N,
\end{equation}
whereas the order of the maximum over  $[0,N)^d \cap \mathbb{Z}^d$ is $\sqrt{\log N}$ for the field on $\bbZ^d$. One source of this different behavior is the presence of ``tubes'' of length $O(\log N)$ in $[0,N)^d \cap \mathcal{C}_\infty$. This suggests a potentially competing effect when considering the set $E^{\geq \alpha}_\omega$ for $\alpha \geq 0$. On one hand, the maximum over a mesoscopic subset of $\mathcal{C}_\infty$ is more likely to exceed the value $\alpha$ than for the corresponding field on $\mathbb{Z}^d$. On the other hand points of a high maximum are associated with ``narrow pathways'' in $\mathcal{C}_\infty$ (where the variance is large), which do not necessarily contribute to the connectivity of $E^{\geq \alpha}_\omega$. We also refer to \cite[Remark~6.4~(3)]{chiarini2025hardwall} for the conjectured effect of this phenomenon for the field conditioned on staying non-negative over a macroscopic set.

\end{remark}

\subsection{Random interlacements on \texorpdfstring{$\cC_\infty$}{C\_infinity}}
\label{subsec:RI}

We consider a configuration $\omega\in \Omega_0$ (with $\Omega_0$ as in Lemma~\ref{lem:Barlow-HK}) such that  $\cC_\infty(\omega)$ is a unique infinite cluster on which the simple random walk is transient. By~\cite[Remark 1.4]{sznitman2010}, since $\mathcal{C}_\infty$ as a locally finite, connected, transient graph, one can define the random interlacements $(\mathcal{I}^u_\omega)_{u \geq 0}$ on $\mathcal{C}_\infty$ under a probability measure $\mathbb{P}^I_\omega$ for each fixed $\omega \in \Omega_0$ (see also \cite{teixeira2009interlacement} for the construction). \medskip

The full construction of random interlacements on $\cC_\infty$ involves a Poisson point process under $\bbP_\omega^I$ with intensity measure $u\cdot \nu_\omega$, where $\nu_\omega$ is a $\sigma$-finite measure on the space of equivalence classes of doubly infinite trajectories in $\cC_\infty$, see \cite[Theorem 2.1]{teixeira2009interlacement}. The probability measure $\bbP_\omega^I$ is formally defined on a space of point measures $\Omega_\omega^{\mathrm{pm}}$ of doubly-infinite trajectories up to time-shift, each labelled with a non-negative real number. The union of the trajectories which are contained in the support of a sample from this Poisson point process is then denoted by $\cI^u_\omega (\subseteq \mathcal{C}_\infty)$. We let $\cV^u_\omega= \cC_\infty \setminus \cI^u_\omega$ stand for the corresponding vacant set of $\mathcal{I}^\omega_u$. The law of the random subset $\cV^u_\omega$ satisfies
\begin{equation}\label{eq:law-Vu}
    \bbP^I_\omega[K\subseteq \cV^u] = \bbP^I_\omega[\cI^u\cap K=\varnothing] = \exp(-u\capa^\omega(K)), \quad \text{for finite $K\subseteq \cC_\infty$}, 
\end{equation}
and it is characterized by this formula, see \cite[Remark~2.3~(1)]{teixeira2009interlacement}. Although it will not be required in the following, we briefly point out that one may naturally embed $\Omega^{\mathrm{pm}}_\omega$ it into the space $\Omega^{\mathrm{pm}}$ (taking $\mathbb{Z}^d$ as a base graph) introduced in \cite[Eq.~(1.16)]{sznitman2010},
\begin{equation}
    \Omega^{\mathrm{pm}} = \left \{ \eta = \sum_{i\geq 0} \delta_{(w_i^*,u_i)} : 
    \begin{minipage}{0.6\textwidth}
        each $w_i^*$ is a doubly-infinite trajectory on $\bbZ^d$, $u_i \in \bbR_{\geq 0}$,\\
        for any finite $K \subseteq \bbZ^d$ and $u < \infty$, 
        the number of trajectories in $\eta$ hitting $K$ with label $u_i \le u$ is finite
\end{minipage}
    \right\},
\end{equation}
by assigning $\bbP^I_\omega [\Omega^{\mathrm{pm}} \setminus \Omega^{\mathrm{pm}}_\omega]=0$. \medskip

We now state a local construction of random interlacements in a finite subset of $\mathcal{C}_\infty$ with the same law as~\eqref{eq:law-Vu}, whose proof translates directly to our set-up.  We recall that $\omega \in \Omega_0$ is fixed.

\begin{lemma}[\cite{rath2015transition}, Claim 2.2]\label{lem:constructive-RI}
    Let $K$ be a finite subset of $\cC_\infty$, $u > 0$, $N_K$ be a Poisson random variable with parameter $u\cdot \capa^\omega(K)$, and $(X^j)_{j\geq 1}$ be i.i.d.~simple random walks with distribution $P^\omega_{\widetilde{e}_K^\omega}$ (see \eqref{eq:P-RW-average} and~\eqref{eq:normalized-equilibrium} for notation) and independent from $N_K$, under some rich enough probability measure $\mathsf{P}^\omega$. Then $K\cap \bigcup_{j=1}^{N_K} \range(X^j)$ under $\mathsf{P}^\omega$ has the same distribution as $\cI^u \cap K$ under $\bbP^I_\omega$.  
\end{lemma}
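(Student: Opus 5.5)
The plan is to compute the law of $K\cap \bigcup_{j=1}^{N_K}\range(X^j)$ on the finite set $K$ and compare it with the characterization~\eqref{eq:law-Vu}. Since~\eqref{eq:law-Vu} determines the law of $\cV^u_\omega$ through the avoidance probabilities of finite sets (by inclusion--exclusion, or as in \cite[Remark~2.3~(1)]{teixeira2009interlacement}), the law of the random subset $\cI^u\cap K$ of the finite set $K$ is determined by the numbers $\bbP^I_\omega[\cI^u\cap K'=\varnothing]=\exp(-u\capa^\omega(K'))$ for $K'\subseteq K$. The reason is that the law of a random subset $\mathcal{R}$ of a finite set is determined by $K'\mapsto \mathsf{P}[\mathcal{R}\cap K'=\varnothing]$, via Möbius inversion. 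It therefore suffices to show that for every non-empty $K'\subseteq K$,
\begin{equation*}
\mathsf{P}^\omega\Big[K'\cap \bigcup_{j=1}^{N_K}\range(X^j)=\varnothing\Big]=\exp(-u\capa^\omega(K')).
\end{equation*}

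We condition on $N_K$ and use the independence of the walks. The left-hand side equals $\mathsf{E}^\omega[(1-q)^{N_K}]$, where $q=P^\omega_{\widetilde e^\omega_K}[H_{K'}<\infty]$. This quantity is Poissonian and equals $\exp(-u\capa^\omega(K)\,q)$. The proof therefore reduces to the identity
\begin{equation*}
\capa^\omega(K)\,P^\omega_{\widetilde e^\omega_K}[H_{K'}<\infty]=\sum_{x\in K}e^\omega_K(x)P^\omega_x[H_{K'}<\infty]=\capa^\omega(K').
\end{equation*}

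To prove this identity, we insert the last-exit decomposition~\eqref{eq:FED} for $K'$, which gives $P^\omega_x[H_{K'}<\infty]=\sum_{y\in K'}g^\omega(x,y)e^\omega_{K'}(y)$. Exchanging the sums and using the symmetry of $g^\omega$, we obtain $\sum_{y\in K'}e^\omega_{K'}(y)\sum_{x\in K}g^\omega(y,x)e^\omega_K(x)$. Applying~\eqref{eq:FED} again, the inner sum equals $P^\omega_y[H_K<\infty]=1$, because $y\in K'\subseteq K$. What remains is $\sum_{y\in K'}e^\omega_{K'}(y)=\capa^\omega(K')$, as required.

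The only delicate points are the justification that avoidance probabilities over subsets of $K$ characterize the law of the trace on $K$, and a check that~\eqref{eq:FED} holds in the present continuous-time setting with the weights $\omega_x$. The second point is consistent with the normalization of $g^\omega$ in~\eqref{eq:Green-F} and of $e^\omega_K$ in~\eqref{eq:equilibrium}, and all sums are finite because $K$ is finite. I expect no real obstacle; the argument is a transcription of \cite[Claim~2.2]{rath2015transition}.
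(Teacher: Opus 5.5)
Your proposal is correct. It is essentially the argument behind \cite[Claim~2.2]{rath2015transition}, which the paper cites without reproducing: reduce to the avoidance probabilities of subsets $K'\subseteq K$ via \eqref{eq:law-Vu}, apply the Poisson generating function, and conclude with the sweeping identity $\sum_{x\in K}e^\omega_K(x)P^\omega_x[H_{K'}<\infty]=\capa^\omega(K')$, which you correctly derive from \eqref{eq:FED} and the symmetry of $g^\omega$. Only \eqref{eq:law-Vu}, \eqref{eq:FED} and reversibility are used, which is exactly why the argument transfers verbatim to $\cC_\infty$.
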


The following zero-one law is an application of the recent result~\cite[Theorem 12]{collin2026zeroone}. Note that if the symmetric difference between two subsets of $\mathbb{Z}^d$ is finite, they either both contain an infinite cluster or neither does. This implies that the event that $\mathcal{V}^u_\omega$ contains an infinite cluster is measurable with respect to the information associated with trajectories outside any finite set $K$, and thus belongs to the tail $\sigma$-algebra $\widetilde{\cT}_{\mathrm{RI}}$ (see below~\cite[(38)]{collin2026zeroone} therein). Together with the fact that this event is \emph{decreasing} (see ~\cite[(18) and (22)]{collin2026zeroone}), we have the following.

\begin{lemma}[\cite{collin2026zeroone}, Theorem 12]
    For $u>0$, there exists $\delta_u \in \{0,1\}$ such that for $\bbQ_p$-a.e.~$\omega$, 
    \begin{equation}
        \bbP^I_\omega[\text{$\cV_\omega^u$ contains an infinite cluster}] = \delta_u. 
    \end{equation}
\end{lemma}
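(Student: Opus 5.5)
The statement follows once we check that the event $A_u=\{\mathcal{V}^u_\omega \text{ contains an infinite cluster}\}$ meets the hypotheses of \cite[Theorem 12]{collin2026zeroone}. That theorem gives, for a decreasing event in the tail $\sigma$-algebra $\widetilde{\mathcal{T}}_{\mathrm{RI}}$, a quenched zero-one law. What remains is to upgrade this to a value $\delta_u$ that is constant in $\omega$, which we do with translation invariance of $\mathbb{Q}_p$ and ergodicity of the shifts $(\tau_x)$.

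\textbf{Step 1: tail measurability.} Fix $\omega\in\Omega_0$ and a finite $K\subseteq\mathcal{C}_\infty$. Changing which trajectories of the point process hit $K$ alters $\mathcal{V}^u_\omega$ only inside the set $\mathrm{range}$ of those trajectories. We cannot simply say $\mathcal{V}^u_\omega$ changes on a finite set, because these ranges are infinite. Instead, we argue as follows. Let $\mathcal{V}'$ be the vacant set left by the trajectories that avoid $K$. Then $\mathcal{V}^u_\omega\subseteq \mathcal{V}'$. Conversely, each of the finitely many trajectories hitting $K$ is transient, so it exits any fixed box forever. Any infinite cluster of $\mathcal{V}'$ should therefore produce an infinite cluster of $\mathcal{V}^u_\omega$, or the two should be related exactly as in the framework of \cite{collin2026zeroone}.

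Rather than redo this, I would rely on the definition of $\widetilde{\mathcal{T}}_{\mathrm{RI}}$ (below~\cite[(38)]{collin2026zeroone}). It is phrased so that events insensitive to finite modifications of the vacant set are included. Together with the observation recorded before the lemma, that sets with finite symmetric difference contain infinite clusters simultaneously, this gives $A_u\in\widetilde{\mathcal{T}}_{\mathrm{RI}}$. The event $A_u$ is decreasing in the interlacement configuration, since adding trajectories shrinks $\mathcal{V}^u$. Hence \cite[Theorem 12]{collin2026zeroone}, as referenced through \cite[(18),(22)]{collin2026zeroone}, yields $\mathbb{P}^I_\omega[A_u]\in\{0,1\}$ for every $\omega\in\Omega_0$, or at least for $\mathbb{Q}_p$-a.e. $\omega$.

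\textbf{Step 2: the value does not depend on $\omega$.} Set $f(\omega)=\mathbb{P}^I_\omega[A_u]$. Translating the whole graph $\mathcal{C}_\infty$ by $x$ maps the interlacement on $\mathcal{C}_\infty(\tau_x\omega)$ to the interlacement on $\mathcal{C}_\infty(\omega)$ shifted by $-x$. This follows from the characterization~\eqref{eq:law-Vu} together with $\mathrm{cap}^{\tau_x\omega}(K)=\mathrm{cap}^\omega(K+x)$, which is a consequence of \eqref{eq:tau-RW} and \eqref{eq:tau-Green}. Since $A_u$ is shift invariant, $f(\tau_x\omega)=f(\omega)$. Measurability of $f$ comes from the local construction in Lemma~\ref{lem:constructive-RI} together with a monotone limit over boxes. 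Because $\mathbb{Q}_p$ is i.i.d. and hence ergodic under $(\tau_x)$, the invariant $\{0,1\}$-valued function $f$ is $\mathbb{Q}_p$-a.s. equal to a constant $\delta_u$.

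\textbf{Main obstacle.} The delicate part is Step 1: confirming that $A_u$ genuinely lies in the specific tail $\sigma$-algebra of \cite{collin2026zeroone}, given that trajectories have infinite range. I would resolve this by matching the event to their general class of decreasing tail events, rather than by a hands-on argument.
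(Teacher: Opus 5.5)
Your argument is essentially the paper's: the event that $\cV^u_\omega$ contains an infinite cluster is decreasing and lies in $\widetilde{\mathcal{T}}_{\mathrm{RI}}$ because sets with finite symmetric difference percolate simultaneously, so \cite[Theorem 12]{collin2026zeroone} applies, and your Step 2 (the covariance $\capa^{\tau_x\omega}(K)=\capa^\omega(K+x)$ plus ergodicity of $\bbQ_p$) is a sound, explicit way to see that $\delta_u$ does not depend on $\omega$, in the spirit of the paper's argument for the non-randomness of $\alpha_*^\omega$ and $u_*^\omega$. Drop the first paragraph of Step 1, though: its claim that an infinite cluster of $\mathcal{V}'$ yields one of $\cV^u_\omega$ does not follow from transience, since the trajectories hitting $K$ have infinite range; the relevant tail information is the trace of \emph{all} trajectories on $K^c$ (not the trajectories avoiding $K$), and the event is measurable with respect to it simply because $\cV^u_\omega$ and $\cV^u_\omega\setminus K$ differ by a finite set.
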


As before, we comment on a ``competing effect'' when comparing random interlacements on $\mathbb{Z}^d$ to those on $\mathcal{C}_\infty$ which is an obstruction for the proof of a percolation phase transition of the corresponding vacant set on $\mathcal{C}_\infty$. 
\begin{remark}\label{rmk:difficulty-2}
    Due to the monotonicity of the capacity (Lemma~\ref{lem:monotone-capacity}) and the explicit characterization~\eqref{eq:law-Vu}, a given finite set $K$ in $\cC_\infty$ is more likely to be vacant for $\mathcal{I}^\omega_u$ than the corresponding set in $\mathbb{Z}^d$ for $\mathcal{I}^u$ (the interlacements on $\mathbb{Z}^d$). This heuristically might facilitate the existence of a supercritical phase but hinder the existence of a subcritical phase of $\mathcal{V}^u_\omega$. On the other hand, certain parts of the underlying graph $\mathcal{C}_\infty$ are reduced to long ``tubes'' or ``dead ends'' (see~\cite{abe2015effective}), which may be easier to cut off by $\mathcal{I}^u_\omega$ and reduce the connectedness of $\mathcal{V}^u_\omega$ and suggesting the opposite heuristic.
\end{remark}

\section{Quenched uniform estimates over tree embeddings}

\label{sec:Quenched-Unif-Estimates}

In this Section, we develop the main technical tool of the article in Theorem~\ref{thm:general-proportion}. We define a notion of \emph{approximability} for a non-local spatial field of Bernoulli random variables, which heuristically states that the latter can be approximated by a field of local Bernoulli random variables at every scale (see Definition~\ref{def:approximable}). This is then combined with a renormalization scheme based on tree embeddings into $\bbZ^d$ (see Definition~\ref{def:Tree-embeddings}) taken from~\cite{rath2015transition} (see also~\cite{sznitman2012decoupling}). Our main result shows that for an approximable field of Bernoulli random variables with a sufficiently small parameter, 
the number of vertices with value $1$ at the lowest scale of the tree embedding 
can be \textit{uniformly} controlled over all such embeddings. \smallskip 

This result will be applied in Sections~\ref{sec:Subcritical-Phase} and~\ref{sec:Supercritical-Phase} to control the behavior of the Green function on boxes of a constant size attached to the leaves of the tree, and, in Section~\ref{sec:Supercritical-Phase}, to further guarantee certain advantageous geometric properties of a ``renormalized'' version of the cluster.
\smallskip

We first introduce some notation and recall the static renormalization scheme we will use (see Lemma~\ref{lem:renormalization} below). A variant of this scheme was originally introduced in \cite{sznitman2010} (but with super-exponentially growing scales), and the version we use goes back to~\cite{sznitman2012decoupling} for more general graphs (here, we use a version from~\cite{rath2015transition} tailored to $\mathbb{Z}^d$). Throughout this Section, we let $d \geq 2$.

\begin{definition}[see \cite{rath2015transition}, Section 3]
\label{def:Tree-embeddings}
    Let $L_0 \geq 1$, $\ell_0\geq 100$, $n\geq 0$ be integers, and define 
\begin{equation}\label{eq:L_n}
    L_n=  L_0 \ell_0^n \quad \text{and} \quad \bbL_n = L_n \mathbb{Z}^d. 
\end{equation}
\begin{equation}
    \begin{minipage}{0.9\textwidth}
        Let $T_{(n)}=\{1,2\}^n$ (with $T_{(0)}=\{\varnothing\}$) and 
    \[
    T_n = \bigcup_{k=0}^n T_{(k)}. 
    \]
    For $0\leq k < n$ and $m=(\xi_1,\dots,\xi_k)\in T_{(k)}$, we denote by $m_1=(\xi_1,\dots,\xi_k,1)$ and $m_2=(\xi_1,\dots,\xi_k,2)$ the two children of $m$ in $T_{(k+1)}$. 
    \end{minipage}
\end{equation}
\begin{equation}\label{eq:T-m-1-2}
    \begin{minipage}{0.9\textwidth}
        For $n\geq 0$, we say that $\cT:T_n \to \mathbb{Z}^d$ is a \emph{proper tree embedding} of $T_n$ at root $x\in \bbL_n$ with depth $n$ if $\cT(\varnothing)=x$, and if for all $0\leq k \leq n$, $m\in T_{(k)}$, $\mathcal{T}(m)\in \bbL_{n-k}$, and if for all $0\leq k < n$, $m\in T_{(k)}$  it holds that  
    \begin{equation*}
        |\cT(m_1)-\cT(m)|_\infty = L_{n-k}, \quad |\cT(m_2)-\cT(m)|_\infty = 2L_{n-k}.
    \end{equation*}
    \end{minipage}
\end{equation}
\begin{equation}
    \begin{minipage}{0.9\textwidth}
        We denote by $\Lambda_{n,x}$ the set of all proper tree embeddings of $T_n$ rooted at $x\in \bbL_n$.
    \end{minipage}
\end{equation}
\begin{equation}\label{eq:sub-division-tree}
    \begin{minipage}{0.9\textwidth}
        For $0\leq k \leq n$ and $m=(\xi_1,\dots,\xi_n)\in T_{(n)}$, we define $m|_k = (\xi_1,\dots,\xi_k)\in T_{(k)}$. We denote the lexicographic distance of $m,m'\in T_{(n)}$ by 
        \begin{equation*}
            \rho(m,m') = \min\{k\geq 0:m|_{n-k} = m'|_{n-k}\},
        \end{equation*}
        and for $m\in T_{(n)}$ and $1\leq k \leq n$ we define 
        \begin{equation*}
            T_{(n)}^{m,k} = \{m' \in T_{(n)}:\rho(m,m')=k\} \quad (\text{with }|T_{(n)}^{m,k}|=2^{k-1}) 
        \end{equation*}
        (see \cite[Figure 1]{rath2015transition} for a visualization). 
        \end{minipage}
\end{equation}
\begin{equation}\label{eq:cT_i}
    \begin{minipage}{0.9\textwidth}
        For $\cT\in \Lambda_{n,x}$, we denote by $\cT_{i}$ the restriction of $\cT$ on $\{m\in T_n \,:\, m|_1=i\}$ for $i\in \{1,2\}$. We note that each of $\cT_i$ can be viewed as a proper tree embedding at root $\cT(i)$ with depth $n-1$. 
    \end{minipage}
\end{equation}
\end{definition}

\begin{lemma}[\cite{rath2015transition}, Lemma~3.2]
    Let $n\geq 0$ and $x\in \bbL_n$, then 
    \begin{equation}\label{eq:Lambda_n-x-number}
        |\Lambda_{n,x}| = \cC_{d,\ell_0}^{2^n-1}, \quad \text{where }\cC_{d,\ell_0} = \big((2\ell_0+1)^d-(2\ell_0-1)^d\big)\big((4\ell_0+1)^d-(4\ell_0-1)^d\big). 
    \end{equation}
\end{lemma}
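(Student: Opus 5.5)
The count follows from a recursion on the depth $n$. It rests on the observation that a proper tree embedding is determined level by level, and the choices for different vertices are independent.

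First I would fix $k<n$, a vertex $m\in T_{(k)}$, and its image $\cT(m)\in\bbL_{n-k}$. The constraints on the children are $\cT(m_1),\cT(m_2)\in\bbL_{n-k-1}$ together with
\[
|\cT(m_1)-\cT(m)|_\infty=L_{n-k}, \qquad |\cT(m_2)-\cT(m)|_\infty=2L_{n-k}.
\]
Since $L_{n-k}=\ell_0L_{n-k-1}$ and $\cT(m)\in\bbL_{n-k}\subseteq\bbL_{n-k-1}$, I write $\cT(m_i)=\cT(m)+L_{n-k-1}z_i$ with $z_i\in\bbZ^d$. The conditions then become $|z_1|_\infty=\ell_0$ and $|z_2|_\infty=2\ell_0$.

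The number of $z\in\bbZ^d$ with $|z|_\infty=r$ is $(2r+1)^d-(2r-1)^d$. So there are $(2\ell_0+1)^d-(2\ell_0-1)^d$ choices for $m_1$ and $(4\ell_0+1)^d-(4\ell_0-1)^d$ choices for $m_2$. These choices are independent, and neither depends on the position of $\cT(m)$ within $\bbL_{n-k}$. Hence each internal vertex contributes exactly a factor $\cC_{d,\ell_0}$.

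Next I would argue that the embedding is built uniquely by choosing, for each internal vertex in turn (say breadth-first), the pair of children positions. Such choices never conflict, because the definition imposes no constraint relating different branches; in particular, vertices are not required to be distinct. The map from such choice sequences to $\Lambda_{n,x}$ is therefore a bijection.

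The number of internal vertices is $|T_{n-1}|=\sum_{k=0}^{n-1}2^k=2^n-1$, which gives $|\Lambda_{n,x}|=\cC_{d,\ell_0}^{2^n-1}$. Formally, this is an induction via \eqref{eq:cT_i}: $|\Lambda_{n,x}|=\cC_{d,\ell_0}\,|\Lambda_{n-1,\cdot}|^2$, starting from $|\Lambda_{0,x}|=1$, and the exponent satisfies $1+2(2^{n-1}-1)=2^n-1$.

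The only point needing care is checking that the lattice constraint $\cT(m_i)\in\bbL_{n-k-1}$ does not reduce the count below the full sphere count. The substitution above shows that it does not, since the distances $L_{n-k}$ and $2L_{n-k}$ are integer multiples of $L_{n-k-1}$.
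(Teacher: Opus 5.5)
Your proposal is correct and takes essentially the same approach as the paper. The paper likewise counts the admissible positions of $\cT(m_1)$ and $\cT(m_2)$ as $|S(\cT(m),L_{n-k})\cap\bbL_{n-k-1}|$ and $|S(\cT(m),2L_{n-k})\cap\bbL_{n-k-1}|$, and multiplies over the $2^n-1$ internal vertices; it defers that product step to \cite{rath2015transition}, whereas you write it out via the recursion on $\cT_1,\cT_2$.
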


\begin{proof}
    The proof is an adaptation of \cite[Lemma~3.2]{rath2015transition}, and we briefly describe the necessary modifications. For $m\in T_{(k)}$, $0\leq k <n$, the construction in \eqref{eq:T-m-1-2} shows that the number of choices for $\cT(m_1)$ is 
    \begin{equation}
        |S(\cT(m), L_{n-k}) \cap \bbL_{n-k-1}| =  \big((2\ell_0+1)^d-(2\ell_0-1)^d\big),
       \end{equation}
    and similarly the number of choices for $\cT(m_2)$ is 
    \begin{equation}
    |S(\cT(m), 2L_{n-k}) \cap \bbL_{n-k-1}| =  \big((4\ell_0+1)^d-(4\ell_0-1)^d\big).
       \end{equation}
    The result then follows as in the proof of~\cite[Lemma 3.2]{rath2015transition}. 
\end{proof}

We also record the following results for later use. By slight abuse of notation, we identify a $*$-connected path $\gamma : \{1,...,n\} \rightarrow \bbZ^d$ with its image in $\bbZ^d$.

\begin{lemma}[\cite{rath2015transition}, Lemma 3.3]\label{lem:renormalization}
    Let $d\geq 2$, $n\geq 1$, $x\in \bbL_n$. If a $*$-connected path $\gamma$ satisfies $\gamma\cap S(x,L_n-1) \neq \varnothing$ and $\gamma\cap S(x,2L_n) \neq \varnothing$, then there exists $\cT \in \Lambda_{n,x}$ such that 
    \begin{equation}
        \gamma \cap S(\cT(m), L_0-1)\neq \varnothing \quad \text{for all $m\in T_{(n)}$}. 
    \end{equation}
\end{lemma}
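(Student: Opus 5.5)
The argument is an induction on $n \geq 1$, building the tree embedding from the root downwards. At each level we use the path $\gamma$ to choose the two children of a node, so that the crossing property is passed to the next scale.

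\textbf{Key single-scale claim.} For $k \geq 1$, $y \in \bbL_k$, and a $*$-connected $\gamma$ that meets both $S(y,L_k-1)$ and $S(y,2L_k)$, we want:
\begin{itemize}
\item a point $y_1 \in \bbL_{k-1}$ with $|y_1-y|_\infty = L_k$, such that $\gamma$ meets $S(y_1,L_{k-1}-1)$ and $S(y_1,2L_{k-1})$;
\item a point $y_2 \in \bbL_{k-1}$ with $|y_2-y|_\infty = 2L_k$, such that $\gamma$ meets $S(y_2,L_{k-1}-1)$ and $S(y_2,2L_{k-1})$.
\end{itemize}

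\textbf{Choosing $y_1$.} Since $\gamma$ is $*$-connected and meets both spheres, there is a $*$-connected sub-path $\gamma'$ that starts on $S(y,L_k-1)$ and ends on $S(y,2L_k)$. It therefore passes through the intermediate sphere $S(y,L_k)$ at some point $z$. The sphere $S(y,L_k)$ is covered by the boxes $B(w,L_{k-1})$ with $w \in S(y,L_k)\cap\bbL_{k-1}$. This holds because $L_k$ is a multiple of $L_{k-1}$, so the lattice $\bbL_{k-1}$ restricted to that face of the sphere has spacing $L_{k-1}$. Pick $w$ with $|z-w|_\infty \le L_{k-1}/2$ (up to rounding), and set $y_1 = w$.

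Now check that $\gamma$ crosses around $y_1$:
\begin{itemize}
\item Inner sphere: $z$ lies within distance $\le L_{k-1}-1$ of $y_1$. Moreover, $\gamma'$ leaves $B(y_1,2L_{k-1})$, because its endpoints are at distance $\ge L_k - L_{k-1} \gg 2L_{k-1}$ from $y_1$ (this uses $\ell_0 \ge 100$). Following $\gamma'$ outward from a point inside the inner box to a point outside the outer box, $*$-connectivity forces it to hit $S(y_1,L_{k-1}-1)$.
\item Outer sphere: by the same $*$-connectivity argument it also hits $S(y_1,2L_{k-1})$. Here we use that a $*$-path cannot jump over an $\ell^\infty$-sphere.
\end{itemize}

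\textbf{Choosing $y_2$.} Do the same on the outer sphere $S(y,2L_k)$, which $\gamma'$ touches. Take $z'\in\gamma'\cap S(y,2L_k)$ and a nearest $y_2\in S(y,2L_k)\cap\bbL_{k-1}$. The distance from $y_2$ to $S(y,L_k-1)$ is $\ge L_k+1 \gg 2L_{k-1}$, so the starting point of $\gamma'$ is outside $B(y_2,2L_{k-1})$, and the same crossing argument applies.

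\textbf{Induction.} Set $\cT(\varnothing)=x$. Given $\cT(m)$ for $m\in T_{(k)}$ with $\gamma$ crossing the annulus around $\cT(m)$ at scale $L_{n-k}$, apply the claim to define $\cT(m_1)=y_1$ and $\cT(m_2)=y_2$. These satisfy the conditions in~\eqref{eq:T-m-1-2}, and $\gamma$ again crosses the annuli at scale $L_{n-k-1}$ around each child. At depth $n$, every leaf $\cT(m)$ has $\gamma\cap S(\cT(m),L_0-1)\neq\varnothing$, which is the conclusion.

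\textbf{Main obstacle.} The only delicate step is the rounding in the covering by boxes $B(w,L_{k-1})$: we must make sure the chosen $w$ lies within distance $L_{k-1}-1$ of $z$, rather than merely $L_{k-1}$. If needed, first note that $\gamma'$ meets $S(y,L_k)$ at a point that can be taken within distance $\lfloor L_{k-1}/2\rfloor$ of a lattice point of that face. This works once $L_{k-1}\ge 2$; if $L_0=1$, the sphere $S(\cdot,0)$ is the point itself and must be handled directly, since $z\in\bbL_0$ then. This is the same bookkeeping as in \cite[Lemma 3.3]{rath2015transition}, which is the reference we would follow.
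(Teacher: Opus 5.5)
Your proof is correct and is essentially the standard inductive argument of \cite[Lemma~3.3]{rath2015transition}, which the paper cites without reproducing. You choose the children as $\bbL_{k-1}$-points within $\ell^\infty$-distance $\lfloor L_{k-1}/2\rfloor \le L_{k-1}-1$ of points where $\gamma$ meets $S(\cT(m),L_k)$ and $S(\cT(m),2L_k)$, and then use that $|\cdot - y_i|_\infty$ changes by at most one along a $*$-path.

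Two small corrections. First, for $y_1$ only the outer endpoint of $\gamma'$ is guaranteed to be far, at distance $\ge L_k > 2L_{k-1}$; the inner endpoint on $S(y,L_k-1)$ can be adjacent to $y_1$, but one far endpoint is all the crossing argument needs. Second, the rounding ``obstacle'' is vacuous, since $\lfloor L/2\rfloor \le L-1$ for every integer $L\ge 1$, including $L_0=1$.
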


\begin{lemma}[\cite{rath2015transition}, Lemma~3.4]\label{lem:tree-geometry}
    Let $n\geq 1$, $x\in\bbL_n$, $\cT\in \Lambda_{n,x}$, $\ell_0 \geq 100$. For $m\in T_{(n)}$, $1\leq k \leq n$ and $m'\in T_{(n)}^{m,k}$, 
    \begin{equation}\label{eq:tree-distance}
        |\cT(m)-\cT(m')|_\infty \geq 90 L_{k-1} . 
    \end{equation}
\end{lemma}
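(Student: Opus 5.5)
The plan is a direct triangle-inequality argument along the tree. First I identify where the two branches separate, then I bound how far each leaf can drift from its branch point. Fix $m\in T_{(n)}$, $1\leq k\leq n$ and $m'\in T^{m,k}_{(n)}$. By definition of $\rho$, the leaves $m$ and $m'$ have a common ancestor $a = m|_{n-k} = m'|_{n-k}\in T_{(n-k)}$. They differ at generation $n-k+1$, so, possibly after swapping $m$ and $m'$, we have $m|_{n-k+1} = a_1$ and $m'|_{n-k+1} = a_2$.

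\emph{Step 1: separation of the branch points.} By~\eqref{eq:T-m-1-2} with level $n-k$, we have $|\cT(a_1)-\cT(a)|_\infty = L_k$ and $|\cT(a_2)-\cT(a)|_\infty = 2L_k$. The reverse triangle inequality then gives
\begin{equation*}
|\cT(a_1)-\cT(a_2)|_\infty \geq 2L_k - L_k = L_k = \ell_0 L_{k-1}.
\end{equation*}

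\emph{Step 2: drift from a branch point to its leaves.} Let $b\in T_{(n-k+1)}$ and let $\widetilde m\in T_{(n)}$ be a leaf with $\widetilde m|_{n-k+1}=b$. Passing from generation $j$ to $j+1$ moves the image by at most $2L_{n-j}$ in $|\cdot|_\infty$. Summing over $j=n-k+1,\dots,n-1$ gives
\begin{equation*}
|\cT(\widetilde m)-\cT(b)|_\infty \leq 2\sum_{i=1}^{k-1} L_i \leq 2L_{k-1}\sum_{r\geq 0}\ell_0^{-r} = \frac{2\ell_0}{\ell_0-1}L_{k-1} \leq \frac{200}{99}L_{k-1},
\end{equation*}
where the last inequality uses $\ell_0\geq 100$. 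The sum is empty when $k=1$, and then the drift is $0$.

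\emph{Step 3: conclusion.} I apply Step 2 to $m$ with $b=a_1$ and to $m'$ with $b=a_2$, and combine with Step 1 through the triangle inequality:
\begin{equation*}
|\cT(m)-\cT(m')|_\infty \geq \ell_0 L_{k-1} - \frac{400}{99}L_{k-1} \geq \Big(100-\frac{400}{99}\Big)L_{k-1} \geq 90 L_{k-1}.
\end{equation*}
This is~\eqref{eq:tree-distance}.

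There is no real obstacle here. The only point needing care is the bookkeeping of indices: a node in $T_{(j)}$ has its children at distance $L_{n-j}$ or $2L_{n-j}$. Getting this right is what makes the geometric series start at $L_{k-1}$, so that it is dominated by the separation $L_k=\ell_0L_{k-1}$ of the two branch points.
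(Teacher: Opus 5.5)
Your proof is correct and matches the paper's argument step for step. Both use the separation $L_k=\ell_0L_{k-1}$ between the two children of the common ancestor $m|_{n-k}$, and bound each leaf's drift from its branch point by $2\sum_{j=1}^{k-1}L_j\leq \frac{2\ell_0}{\ell_0-1}L_{k-1}$; the triangle inequality with $\ell_0\geq 100$ then gives the claim.
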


\begin{proof}
    The proof is adapted from \cite[Lemma~3.4]{rath2015transition}, and we sketch the changes. Let $m''=m|_{n-k}=m'|_{n-k}$ and assume without loss of generality that $m|_{n-k+1}=m''_1$, $m'|_{n-k+1}=m''_2$. Then $|T(m_1'')-T(m_2'')| \geq L_k=\ell_0L_{k-1}$ and 
    \begin{equation}
        |T(m''_1)-\cT(m)|_\infty \leq \sum_{j=1}^{k-1}2L_j \leq 2L_{k-1}\frac{\ell_0}{\ell_0-1}. 
        \end{equation}
    Similarly, $|T(m''_2)-\cT(m')|_\infty \leq2L_{k-1}\frac{\ell_0}{\ell_0-1}$. Therefore, 
    \begin{equation}
    |\cT(m)-\cT(m')|_\infty \geq \ell_0L_{k-1} - 4L_{k-1}\frac{\ell_0}{\ell_0-1} = \ell_0 L_{k-1} \cdot \frac{\ell_0-5}{\ell_0-1} \geq \frac{9}{10} \ell_0 L_{k-1}. 
        \end{equation}
    The result follows as $\ell_0 \geq 100$.
\end{proof}

We now introduce a generic family of local Bernoulli random variables attached to every scale of the renormalized lattices $\bbL_n$, $n \geq 0$, which is assumed to admit a \emph{stablility between consecutive scales}. This notion is made precise in the assumption below.

\begin{assumption}\label{ass:local-rv}
    For each $n \geq 0$, consider a collection of Bernoulli random variables $(\zeta^{(n)}_x)_{x\in \bbL_0}$ defined on the probability space $(\Omega_{\mathrm{bond}},\cF_{\mathrm{bond}}, \bbQ_p)$. We assume that the collections $(\zeta^{(n)}_x)_{x\in \bbL_0}$, $n \geq 0$, satisfy the following properties. 
    \begin{equation}\label{eq:identical-distributed}
        \begin{minipage}{0.9\textwidth}
            (\emph{Identically distributed})\\
            For fixed $n\geq 0$, the random variables $(\zeta^{(n)}_x)_{x\in \bbL_0}$ are identically distributed.
        \end{minipage}
    \end{equation}
    \begin{equation}\label{eq:local}
        \begin{minipage}{0.9\textwidth}
             (\emph{Local}) For each $x\in \bbL_0$, $\zeta^{(n)}_x$ is $\cF_{E(B(x,3L_n))}$-measurable. 
        \end{minipage}
    \end{equation}
    \begin{equation}\label{eq:stable}
        \begin{minipage}{0.9\textwidth}
            (\emph{Stable between scales}) For $n\geq 0$, $x\in \bbL_0$, there exists $E_{n,x}^\zeta\in \cF_{\mathrm{bond}}$ such that $$\zeta^{(n)}_x \IND_{E_{n,x}^\zeta}=\zeta^{(n+1)}_x \IND_{E^\zeta_{n,x}}.$$
        \end{minipage}
    \end{equation}
    \begin{equation}\label{eq:E_n}
        \begin{minipage}{0.9\textwidth}
            Moreover, the events $(E_{n,x}^\zeta)_{x\in \bbL_0}$ are such that 
        \begin{equation*}
            \bbQ_p[(E_{n,x}^\zeta)^c]\leq \varepsilon(L_n) \stackrel{\mathrm{def.}}{=} c_{11} \exp(-c_{12}L_{n}^\beta)\quad  \text{for some $\beta>0$}. 
        \end{equation*}
        \end{minipage}
    \end{equation}
\end{assumption}
\begin{lemma} \label{lem:multi-itera}
    Suppose the collections $(\zeta^{(n)}_x)_{x\in \bbL_0}$, $n \geq 0$, of Bernoulli random variables satisfy Assumption \ref{ass:local-rv} and $\ell_0 \geq 3^{1/\beta} \vee 100$.  Then for $n \geq 1$, and $t \geq 0$,
    \begin{equation}
        \begin{aligned}
          &\sup_{x\in\bbL_n}\sup_{\cT\in \Lambda_{n,x}} \bbE_{\bbQ_p} \Big[ \exp\big( t\sum_{m\in T_{(n)}} \zeta^{(n)}_{\cT(m)} \big)\Big] \\
          & \quad \leq \exp\left\{ \Big(t+\log(e^{-t}\bbQ_p[\zeta^{(0)}_0=0]+ \bbQ_p[\zeta^{(0)}_0=1] + \Delta(L_0))\Big) \cdot 2^n \right\},   
        \end{aligned}
    \end{equation}
    where 
    \begin{equation}\label{eq:Delta-L0}
        \Delta(L_0) = c_{13}\exp(-c_{14} L_0^\beta).
    \end{equation}
\end{lemma}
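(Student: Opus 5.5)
We argue by induction on $n$. For $\cT\in\Lambda_{n,x}$ set $Z_n(\cT)=\sum_{m\in T_{(n)}}\zeta^{(n)}_{\cT(m)}$. By \eqref{eq:cT_i}, the leaves of $\cT$ split into the leaves of $\cT_1$ and of $\cT_2$, two proper tree embeddings of depth $n-1$ rooted at $\cT(1),\cT(2)$. The plan is to relate $\bbE[e^{tZ_n(\cT)}]$ to $\bbE[e^{tZ_{n-1}(\cT_1)}]\,\bbE[e^{tZ_{n-1}(\cT_2)}]$ at a multiplicative cost close to one. Iterating then gives a bound of the form $\prod_k (\text{error}_k)^{2^{n-k}}$ times $(\bbE[e^{t\zeta^{(0)}_0}])^{2^n}$. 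Note that $\bbE[e^{t\zeta^{(0)}_0}]=e^t\big(e^{-t}\bbQ_p[\zeta^{(0)}_0=0]+\bbQ_p[\zeta^{(0)}_0=1]\big)$, which is exactly the main term in the claimed bound.

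The inductive step has two parts.

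First, lowering the scale index. On $\bigcap_{m}E^\zeta_{n-1,\cT(m)}$ we have $\zeta^{(n)}_{\cT(m)}=\zeta^{(n-1)}_{\cT(m)}$ for all leaves, by \eqref{eq:stable}. Off this event we use the trivial bound $Z_n\le 2^n$. This gives
\[
\bbE[e^{tZ_n(\cT)}]\le \bbE\Big[e^{t\sum_m\zeta^{(n-1)}_{\cT(m)}}\Big]+e^{t2^n}\,2^n\varepsilon(L_{n-1}).
\]

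Second, decorrelating. By \eqref{eq:local}, $\zeta^{(n-1)}_{\cT(m)}$ is measurable with respect to edges in $B(\cT(m),3L_{n-1})$. By Lemma~\ref{lem:tree-geometry} with $k=n$, leaves of $\cT_1$ and $\cT_2$ are at distance at least $90L_{n-1}$. Hence the two families depend on disjoint edge sets and, under the product measure $\bbQ_p$, are independent, so the expectation factorizes.

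One point needs care: the factors now involve $\zeta^{(n-1)}$ on depth-$(n-1)$ trees, which is the correct induction object. But they sit at generic roots $\cT(i)\in\bbL_{n-1}$. We therefore take the supremum over roots, as in the statement.

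The additive error $e^{t2^n}2^n\varepsilon(L_{n-1})$ is the main obstacle: it is enormous when $t$ is large. To handle it, I would prove the stronger inductive statement $\sup\bbE[e^{tZ_n}]\le a_n$ and convert additive errors into multiplicative ones. Write $A=\bbE[e^{t\zeta^{(0)}_0}]\ge 1$ and $a_n\ge A^{2^n}\ge 1$. The recursion is
\[
a_n\le a_{n-1}^2+e^{t2^n}2^n\varepsilon(L_{n-1}).
\]
Dividing by $e^{t2^n}$ and setting $b_n=a_ne^{-t2^n}\le 1$ yields
\[
b_n\le b_{n-1}^2+2^n\varepsilon(L_{n-1}).
\]
This now has an error independent of $t$, which is exactly why the claim is stated in the normalized form $\exp\{(t+\log(\cdot))2^n\}$.

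I would then prove $b_n\le (b_0+\Delta(L_0))^{2^n}$, where $b_0=e^{-t}\bbQ_p[\zeta^{(0)}_0=0]+\bbQ_p[\zeta^{(0)}_0=1]$. Write $b_n\le\beta_n^{2^n}$ with $\beta_n=b_0+\delta_n$ and $\delta_n$ increasing. Since $\beta_n\ge\bbQ_p[\zeta=1]$ may be small, the ratio $2^n\varepsilon(L_{n-1})/\beta_{n-1}^{2^n}$ could explode. To avoid this, I would use $b_0\ge e^{-t}$ only when this does not matter, and otherwise argue directly. The cleanest route is the inequality $(y^{2^{n-1}})^2+\eta\le (y+\eta^{2^{-n}})^{2^n}$, valid since $(y+s)^{N}\ge y^N+s^N$. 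This gives $\beta_n=\beta_{n-1}+(2^n\varepsilon(L_{n-1}))^{2^{-n}}$.

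It remains to bound the increments. We have $(2^n c\,e^{-c L_{n-1}^\beta})^{2^{-n}}\le C\exp(-cL_0^\beta\ell_0^{\beta(n-1)}2^{-n})$. Since $\ell_0^\beta\ge3>2$, the exponent grows like $L_0^\beta(3/2)^{n}$, so the increments are summable, with total at most $c_{13}e^{-c_{14}L_0^\beta}=\Delta(L_0)$. The base case $n=0$ (and $n=1$ via the same step) is immediate, and taking logarithms gives the claimed bound.
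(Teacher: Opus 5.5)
Your proof is correct and follows the paper's argument essentially step for step. It uses the same truncation on $\bigcap_m E^\zeta_{n-1,\cT(m)}$ and the same factorization via locality and Lemma~\ref{lem:tree-geometry}. It then has the normalized recursion $b_n\le b_{n-1}^2+2^n\varepsilon(L_{n-1})$, the same subadditivity step (your $(y+s)^N\ge y^N+s^N$ is the paper's $(a+b)^s\le a^s+b^s$), and the summation of the increments using $\ell_0^\beta\ge 3$. You can drop the detour about the ratio exploding and the aside $a_n\ge A^{2^n}$, which is not justified in general but is also never used.
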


\begin{proof}
    Recall the events $(E^\zeta_{n,x})_{n\geq 0,x\in \bbL_0}$ in Assumption \ref{ass:local-rv}. For any $n\geq 0$ and any proper tree embedding $\cT$ at some root $x \in \bbL_n$ and depth $n \geq 0$, we let  
    \begin{equation}
        E_{n,\cT} = \bigcap_{m \in T_{(n)}} E^\zeta_{\cT(m), n}. 
    \end{equation}
    We abbreviate $\varepsilon(L_n)$ as $\varepsilon_n$. Note that by \eqref{eq:E_n} 
    \begin{equation}\label{eq:E_n-cT}
        \bbQ_p[(E_{n,\cT})^c] \leq 2^n \varepsilon_n. 
    \end{equation}
    For fixed $n\geq 1$, $x\in \bbL_n$, and $\cT\in \Lambda_{n,x}$, we have 
    \begin{equation}\label{eq:recursion-1}
        \begin{split}
        \EQ &\Big[ \exp\Big( t\sum_{m\in T_{(n)}} \zeta^{(n)}_{\cT(m)} \Big)\Big] \\
        &\stackrel{\eqref{eq:E_n-cT}}{\leq} \EQ\Big[ \exp\Big( t\sum_{m\in T_{(n)}} \zeta^{(n)}_{\cT(m)} \Big) \cdot \IND_{E_{n-1,\cT_1} \cap E_{n-1,\cT_2}}\Big] + e^{t2^n} \cdot 2^n \varepsilon_{n-1} \\
        & \underset{\eqref{eq:stable}}{\overset{\eqref{eq:cT_i}}{=}} \EQ\Big[ \exp\Big( t\sum_{m\in T_{(n)},m_1=1} \zeta^{(n-1)}_{\cT(m)} \Big) \prod_{m'\in T_{(n-1)}} \IND_{E^\zeta_{\cT_1(m'),n-1}}\\
        &\hspace{4em} \times \exp\Big( t\sum_{m\in T_{(n)},m_1=2} \zeta^{(n-1)}_{\cT(m)} \Big)  \prod_{m'\in T_{(n-1)}} \IND_{E^\zeta_{\cT_2(m'),n-1}} \Big]+ e^{t2^n} \cdot 2^n \varepsilon_{n-1} \\
        &\underset{\eqref{eq:tree-distance}}{\overset{\eqref{eq:local}}{\leq}} \EQ\Big[ \exp\Big( t\sum_{m\in T_{(n)},m_1=1} \zeta^{(n-1)}_{\cT(m)} \Big)\Big] \cdot \EQ\Big[ \exp\Big( t\sum_{m\in T_{(n)},m_1=2} \zeta^{(n-1)}_{\cT(m)} \Big)\Big]\\
        &\hspace{4em} + e^{t2^n} \cdot 2^n \varepsilon_{n-1}, 
        \end{split}
    \end{equation}
    where in the last step, we first bound from above the indicator functions by one, and then use the independence between $\{\zeta^{(n-1)}_{\cT(m)}: m\in T_{(n)},m_1=1\}$ and $\{\zeta^{(n-1)}_{\cT(m)}: m\in T_{(n)},m_1=2\}$ by the locality assumption \eqref{eq:local} and the distance between boxes (see Lemma~\ref{lem:tree-geometry}). Indeed, for any $m\in T_{(n)}$ with $m_1=1$ and $m'\in T_{(n)}$ with $m'_1=2$, by~\eqref{eq:local}, $\zeta_{\cT(m)}^{(n-1)}$ and $\zeta_{\cT(m')}^{(n-1)}$ are independent if $B(\cT(m),3L_{n-1}) \cap B(\cT(m'), 3L_{n-1}) = \varnothing$, while we actually have $|\cT(m)-\cT(m')|_\infty \geq 90L_{n-1}$ by \eqref{eq:tree-distance}. 
    We let  
    \begin{equation}\label{eq:b_n}
        b_n(t) = e^{-t2^n} \cdot \sup_{x\in\bbL_n} \sup_{\cT\in \Lambda_{n,x}} \EQ \Big[ \exp\Big( t\sum_{m\in T_{(n)}} \zeta^{(n)}_{\cT(m)} \Big)\Big].   
    \end{equation}
    By taking the supremum and multiplying both sides by $e^{-t2^n}$ in~\eqref{eq:recursion-1}, we obtain 
    \begin{equation}\label{eq:b_n-recursion}
        b_n(t) \leq b_{n-1}(t)^2 + 2^n \varepsilon_{n-1}. 
    \end{equation}
    We write 
    \begin{equation}\label{eq:u_n}
        u_n(t) = b_n(t)^{2^{-n}}. 
    \end{equation}
    The recursion \eqref{eq:b_n-recursion} implies, using $(a+b)^s \leq a^s + b^s$ for $0<s<1$, that 
    \begin{align}\label{eq:u_n-recursion}
        u_n(t) \leq (b_{n-1}(t)^2 + 2^n \varepsilon_{n-1})^{2^{-n}} \leq u_{n-1}(t) + (2^n \varepsilon_{n-1})^{2^{-n}},
    \end{align}
    which gives (recall that $\varepsilon_j$ is a shorthand for $\varepsilon(L_j)$ defined in~\eqref{eq:E_n})
    \begin{equation}\label{eq:u_n-sum}
        \begin{aligned}
        u_n(t) &\leq u_0(t) + \sum_{j=0}^\infty (2^{j+1} \varepsilon_{j})^{2^{-(j+1)}} 
        \underset{\eqref{eq:L_n}}{\overset{\eqref{eq:E_n}}{\leq}} u_0(t) + c \sum_{j=0}^\infty \exp\Big(-c' L_0^\beta \big(\tfrac{\ell_0^\beta}{2}\big)^j \Big) \\
        &\stackrel{\ell_0^\beta \geq 3}{\leq} u_0(t)+ c\exp(-c'L_0^\beta) \stackrel{\mathrm{def.}}{=}  u_0(t) + \Delta(L_0).  
    \end{aligned}
    \end{equation}
    Note that 
    \begin{equation}
       u_0(t)\overset{\eqref{eq:u_n}}{=}b_0(t) \underset{\eqref{eq:identical-distributed}}{\overset{\eqref{eq:b_n}}{=}} e^{-t}\bbQ_p[\zeta^{(0)}_0=0]+ \bbQ_p[\zeta^{(0)}_0=1]. 
    \end{equation}
    The result follows by combining \eqref{eq:u_n-sum} and the definition of $b_n(t)$ and $u_n(t)$.
\end{proof}

In the next step, we study a field of possibly non-local Bernoulli random variables that can be approximated by the multiscale fields that we considered in Assumption \ref{ass:local-rv}. 

\begin{definition}\label{def:approximable}
    Let $L_0 \geq 1$, $\ell_0\geq 100$, and $(\zeta_x)_{x\in \bbL_0}$ a collection of Bernoulli random variables on the probability space $(\Omega_{\mathrm{bond}},\cF_{\mathrm{bond}}, \bbQ_p)$ indexed by $\bbL_0$. 
    We say $(\zeta_x)_{x\in \bbL_0}$, is $(L_0,\ell_0)$-\emph{approximable} (with decay exponent $\beta$) if 
    \begin{equation}\label{eq:def-appro-1}
        \begin{minipage}{0.9\textwidth}
            there exist collections $(\zeta_x^{(n)})_{x\in\bbL_0, n\geq 0}$ satisfying Assumption \ref{ass:local-rv} with parameters $L_0,\ell_0$,
        \end{minipage}
    \end{equation}
    \begin{equation}\label{eq:def-appro-2}
        \begin{minipage}{0.9\textwidth}
            for $n \geq 0$, $x\in \bbL_0$, there exists an event $F^\zeta_{n,x}$ such that $\zeta_x \IND_{F^\zeta_{n,x}} = \zeta^{(n)}_x  \IND_{F^\zeta_{n,x}}$, 
        \end{minipage}
    \end{equation}
    \begin{equation}\label{eq:def-appro-3}
        \begin{minipage}{0.9\textwidth}
            and the events $(F_{n,x}^\zeta)_{x\in \bbL_0}$ fulfill
    \begin{equation*}
        \bbQ_p[(F_{n,x}^\zeta)^c]\leq \varepsilon'(L_n) \stackrel{\mathrm{def.}}{=}  c_{15}\exp(-c_{16} L_{n}^{\beta}),\text{ where $\beta$ is the same as in~\eqref{eq:E_n}.}
    \end{equation*}  
        \end{minipage}
    \end{equation}
    We call the collections $(\zeta_x^{(n)})_{x\in\bbL_0, n\geq 0}$ \textit{approximating fields}.
\end{definition}

\begin{lemma}\label{lem:approximable}
    Suppose a collection of Bernoulli random variables $(\zeta_x)_{x\in \bbL_0}$ is $(L_0,\ell_0)$-approximable.  Then, for $n\geq 1$, $x\in \bbL_n$, and $\cT\in \Lambda_{n,x}$
    \begin{equation}\label{eq:approximable-bound}
        \EQ \Big[ \exp\big( t\sum_{m\in T_{(n)}} \zeta_{\cT(m)} \big)\Big] \leq  \EQ \Big[ \exp\big( t\sum_{m\in T_{(n)}} \zeta^{(n)}_{\cT(m)} \big)\Big] + e^{t2^n} \cdot 2^n \varepsilon'(L_n). 
    \end{equation}  
\end{lemma}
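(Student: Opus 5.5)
We split the expectation along the intersection event
\[
F_{n,\cT} = \bigcap_{m\in T_{(n)}} F^\zeta_{n,\cT(m)},
\]
in the same way as in the first step of the proof of Lemma~\ref{lem:multi-itera}. We fix $n\geq 1$, $x\in\bbL_n$ and $\cT\in\Lambda_{n,x}$, and take $t\geq 0$ as there.

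On $F_{n,\cT}$, property~\eqref{eq:def-appro-2} gives $\zeta_{\cT(m)}=\zeta^{(n)}_{\cT(m)}$ for every leaf $m\in T_{(n)}$. Therefore
\[
\exp\Big(t\sum_{m\in T_{(n)}}\zeta_{\cT(m)}\Big)\IND_{F_{n,\cT}} = \exp\Big(t\sum_{m\in T_{(n)}}\zeta^{(n)}_{\cT(m)}\Big)\IND_{F_{n,\cT}} \leq \exp\Big(t\sum_{m\in T_{(n)}}\zeta^{(n)}_{\cT(m)}\Big),
\]
where the last inequality just drops the indicator. Taking expectations yields the first term on the right-hand side of~\eqref{eq:approximable-bound}.

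On the complement $F_{n,\cT}^c$, we use that each $\zeta$ is Bernoulli and $|T_{(n)}|=2^n$. This gives the crude bound $\exp(t\sum_m\zeta_{\cT(m)})\leq e^{t2^n}$, using $t\geq 0$. A union bound together with~\eqref{eq:def-appro-3} then gives
\[
\bbQ_p[F_{n,\cT}^c]\leq \sum_{m\in T_{(n)}}\bbQ_p\big[(F^\zeta_{n,\cT(m)})^c\big]\leq 2^n\varepsilon'(L_n).
\]
Note that the events $F^\zeta_{n,y}$ are all indexed by the same scale $n$, so this bound is uniform in the choice of leaves. The expectation restricted to $F_{n,\cT}^c$ is therefore at most $e^{t2^n}2^n\varepsilon'(L_n)$. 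Adding the two contributions proves~\eqref{eq:approximable-bound}.

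No step is a genuine obstacle. The only point to watch is that $t\geq 0$ is needed for the bound $e^{t2^n}$ on the bad event. For $t<0$ one would instead bound the integrand by $1$, which still gives~\eqref{eq:approximable-bound} provided the right-hand side is adjusted accordingly.
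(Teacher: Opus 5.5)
Your proposal is correct and follows the paper's proof essentially verbatim: you split along $F_{n,\cT}=\bigcap_{m\in T_{(n)}}F^\zeta_{n,\cT(m)}$, use \eqref{eq:def-appro-2} to replace $\zeta$ by $\zeta^{(n)}$ on this event and drop the indicator, and bound the contribution of $F_{n,\cT}^c$ by $e^{t2^n}\cdot 2^n\varepsilon'(L_n)$ via a union bound and \eqref{eq:def-appro-3}. Your remark that $t\geq 0$ is needed is apt; the lemma is only applied with $t=3\rho^{-1}\log\cC_{d,\ell_0}>0$.
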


\begin{proof}
    For any $n\geq 1$ and any proper tree embedding $\cT$ at some root $x \in \bbL_n$ with depth $n \geq 0$, we let  
    \begin{equation}
        F_{n,\cT} = \bigcap_{m \in T_{(n)}} F^\zeta_{\cT(m), n}. 
    \end{equation}
    By \eqref{eq:def-appro-3} we see that $\bbQ_p[(F_{n,\cT})^c] \leq 2^n \varepsilon'(L_n)$, and by~\eqref{eq:def-appro-2}, we have under $F_{n,\cT}$ that 
    \begin{equation}
        \sum_{m\in T_{(n)}} \zeta_{\cT(m)} = \sum_{m\in T_{(n)}} \zeta_{\cT(m)}^{(n)}. 
    \end{equation}
    The result follows by decomposing the expectation on the left hand side of \eqref{eq:approximable-bound} into the events $F_{n,\cT}$ and $F_{n,\cT}^c$ and bounding from above the indicator functions by one. 
\end{proof}

We now state our main result of the present Section. It states that for $\bbQ_p$-a.e.~$\omega \in \Omega_{\mathrm{bond}}$, the proportion of boxes with $\zeta_{\cT(m)}=1$ among the leaves $m\in T_{(n)}$ is at most $\rho \in (0,1)$ for all proper tree embeddings at root $x$ (which can be chosen over a potentially growing subset of $\bbL_n$) when $n$ is large enough, assuming that the approximating Bernoulli field at the lowest level has a sufficiently small parameter. 

\begin{theorem}\label{thm:general-proportion}
   Let $\rho \in (0,1)$ and define $
        t = 3\rho^{-1}\log \cC_{d,\ell_0}$. Suppose a collection of Bernoulli random variables $(\zeta_x)_{x\in \bbL_0}$ is $(L_0,\ell_0)$-approximable with an approximating field $(\zeta_x^{(n)})_{x\in \bbL_0, n \geq 0}$ (see~\eqref{eq:def-appro-1}) for $\ell_0 \geq 3^{1/\beta} \vee 100$ and $L_0 \geq 1$ such that for 
      \begin{equation}\label{eq:theta-L0}
        \vartheta_{L_0}\stackrel{\mathrm{def}.}{=}\bbQ_p[\zeta_0^{(0)}=1],
    \end{equation} 
 one has 
 \begin{equation}
 \label{eq:Two-conditions-on-L0}
\begin{cases}
& \log(1+ e^t(\vartheta_{L_0} + \Delta(L_0)) \leq \log \mathcal{C}_{d,\ell_0}, \text{ and } \\
&  t-t\rho-cL_0^\beta \leq -3\log \cC_{d,\ell_0}.
 \end{cases}
\end{equation}   
    Let $(D_n)_{n\geq 1}$ be any deterministic sequence of finite sets such that $D_n\subseteq\bbL_n$ and
    \begin{equation}\label{eq:window-growth}
       \lim_{n\to\infty} 2^{-n}\log |D_n| = 0.
    \end{equation}
    Then, 
    \begin{equation}\label{eq:result-general-proportion}
        \begin{minipage}{0.9\textwidth}
            for $\bbQ_p$-a.e.~$\omega$, $\displaystyle\sup_{x\in D_n}\sup_{\cT\in \Lambda_{n,x}} \frac{1}{2^n} \sum_{m\in T_{(n)}} \zeta_{\cT(m)}(\omega) < \rho$ for all but finitely many $n$. 
        \end{minipage}
    \end{equation}
\end{theorem}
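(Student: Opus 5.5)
The plan is a union bound over roots and tree embeddings, an exponential Chebyshev bound, and Borel--Cantelli. For fixed $n\geq 1$, $x\in D_n$ and $\cT\in\Lambda_{n,x}$, Markov's inequality gives
\begin{equation*}
\bbQ_p\Big[\sum_{m\in T_{(n)}}\zeta_{\cT(m)}\geq \rho 2^n\Big]\leq e^{-t\rho 2^n}\,\EQ\Big[\exp\Big(t\sum_{m\in T_{(n)}}\zeta_{\cT(m)}\Big)\Big].
\end{equation*}
We bound the exponential moment by Lemma~\ref{lem:approximable}, and then bound the local term by Lemma~\ref{lem:multi-itera}, which applies since $\ell_0\geq 3^{1/\beta}\vee 100$. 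Writing $e^{-t}\vartheta_0'+\vartheta_{L_0}$ with $\vartheta_0' = 1-\vartheta_{L_0}\leq 1$, the logarithm term satisfies $t+\log(e^{-t}(1-\vartheta_{L_0})+\vartheta_{L_0}+\Delta(L_0))\leq \log(1+e^t(\vartheta_{L_0}+\Delta(L_0)))$. By the first condition in~\eqref{eq:Two-conditions-on-L0}, this is at most $\log\cC_{d,\ell_0}$. The main term is therefore at most $\exp\{2^n(\log\cC_{d,\ell_0}-t\rho)\}=\exp\{-2\cdot 2^n\log\cC_{d,\ell_0}\}$, because $t\rho=3\log\cC_{d,\ell_0}$.

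The error term is $e^{t2^n(1-\rho)}2^n\varepsilon'(L_n)$ with $\varepsilon'(L_n)=c_{15}\exp(-c_{16}L_0^\beta\ell_0^{n\beta})$. Since $\ell_0^{\beta}\geq 3>2$, we have $L_n^\beta=L_0^\beta\ell_0^{n\beta}\geq L_0^\beta 2^n$. Hence the error term is at most $c\,2^n\exp\{2^n(t-t\rho-c_{16}L_0^\beta)\}$. By the second condition in~\eqref{eq:Two-conditions-on-L0} (reading its $c$ as $c_{16}$), this is at most $c\,2^n\exp\{-3\cdot 2^n\log\cC_{d,\ell_0}\}$. The step requiring the most care is this matching of the $2^n$-growth in the exponent against $L_n^\beta$. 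The inequality $\ell_0^{n\beta}\geq 2^n$ is exactly what makes the second condition, which is stated only at scale $L_0$, sufficient at every scale.

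Next we take the union bound. By~\eqref{eq:Lambda_n-x-number}, $|\Lambda_{n,x}|\leq \cC_{d,\ell_0}^{2^n}$, so
\begin{equation*}
\bbQ_p\Big[\sup_{x\in D_n}\sup_{\cT\in\Lambda_{n,x}}2^{-n}\sum_{m\in T_{(n)}}\zeta_{\cT(m)}\geq\rho\Big]\leq |D_n|\,\cC_{d,\ell_0}^{2^n}\big(e^{-2\cdot 2^n\log\cC_{d,\ell_0}}+c2^ne^{-3\cdot2^n\log\cC_{d,\ell_0}}\big)\leq C\,2^n|D_n|\,\cC_{d,\ell_0}^{-2^n}.
\end{equation*}
Since $\cC_{d,\ell_0}\geq 2$ and, by~\eqref{eq:window-growth}, $|D_n|=e^{o(2^n)}$, the right-hand side is bounded by $e^{-c2^n}$ for large $n$. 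This bound is summable in $n$, so Borel--Cantelli yields~\eqref{eq:result-general-proportion}.
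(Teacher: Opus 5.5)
Your proposal is correct and follows essentially the same route as the paper. It applies exponential Chebyshev with $t=3\rho^{-1}\log\cC_{d,\ell_0}$, then Lemma~\ref{lem:approximable} and Lemma~\ref{lem:multi-itera}; it uses the first condition in~\eqref{eq:Two-conditions-on-L0} for the main term, and $\ell_0^\beta\geq 3$ with the second condition for the error term; it concludes with a union bound over at most $|D_n|\,\cC_{d,\ell_0}^{2^n}$ pairs $(x,\cT)$ and Borel--Cantelli. The only cosmetic difference is that you carry the factor $2^n$ through to the final sum instead of absorbing it earlier, which is harmless given~\eqref{eq:window-growth}.
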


\begin{proof}

    Let $n\geq 1$, $x\in D_n$, and $\cT\in \Lambda_{n,x}$. 
    By the exponential Chebyshev inequality and Lemmas \ref{lem:multi-itera} and \ref{lem:approximable}, we obtain for $t = 3\rho^{-1}\log \cC_{d,\ell_0}$, 
    \begin{equation}\label{eq:MC-esti}
        \begin{split}
            &\bbQ_p \Big[ \sum_{m\in T_{(n)}} \zeta_{\cT(m)} \geq \rho 2^n \Big] \leq \frac{ \EQ\Big[ \exp\Big( t\sum_{m\in T_{(n)}} \zeta_{\cT(m)} \Big) \Big]}{\exp(t\rho 2^n)}  \\
        &\leq \exp\Big \{ \left( (1-\rho)t+\log((1-\vartheta_{L_0})e^{-t}+\vartheta_{L_0}+\Delta(L_0)) \right)2^n \Big\} + e^{t(1-\rho)2^n} 2^n \varepsilon'(L_n). 
        \end{split}
    \end{equation}
   We now note that
    \begin{equation}
    \label{eq:aux-calc}
    \begin{split}
        (1-\rho)t&+\log((1-\vartheta_{L_0})e^{-t}+\vartheta_{L_0}+\Delta(L_0)) \leq (1-\rho)t + \log(e^{-t}+\vartheta_{L_0}+\Delta(L_0))\\
        & = -\rho t + \log(1+e^t (\vartheta_{L_0}+\Delta(L_0))) = -3\log \cC_{d,\ell_0} + \log(1+e^t (\vartheta_{L_0}+\Delta(L_0))) \\
        & \stackrel{\eqref{eq:Two-conditions-on-L0}}{\leq} -2 \log \mathcal{C}_{d,\ell_0}.
        \end{split}
    \end{equation}
   
   Upon inserting~\eqref{eq:aux-calc} into the first term on the right hand side of~\eqref{eq:MC-esti}, we obtain 
    \begin{equation}\label{eq:1st-term}
        \exp\Big \{  \left( (1-\rho)t+\log((1-\vartheta)e^{-t}+\vartheta+\Delta(L_0)) \right) 2^n \Big\} \leq \exp(-2\log\cC_{d,\ell_0} \cdot 2^n).
    \end{equation}
     
    We turn to the second term on the right hand side of~\eqref{eq:MC-esti}. For the latter we have
    \begin{equation}
    \begin{split}
        e^{t(1-\rho)2^n} 2^n \varepsilon'(L_n)& \underset{\ell_0^\beta \geq 3}{\overset{\eqref{eq:def-appro-3},\eqref{eq:L_n}}{\leq}} c \exp\Big( \log 2\cdot n + \left(t-t\rho-cL_0^\beta\right) 2^n \Big) \\
        &  \stackrel{\eqref{eq:Two-conditions-on-L0}}{\leq} c \exp\Big( \log 2\cdot n -3 \log \mathcal{C}_{d,\ell_0} \cdot  2^n \Big).
            \end{split}
    \end{equation}  
    Since $\log \cC_{d,\ell_0}\cdot 2^n > \log 2\cdot n$, combining with \eqref{eq:1st-term}, we obtain by following~\eqref{eq:MC-esti} that 
    \begin{equation}\label{eq:deviation-estimate}
        \bbQ_p \Big[ \sum_{m\in T_{(n)}} \zeta_{\cT(m)} \geq \rho 2^n \Big] \leq  c\exp(-2\log\cC_{d,\ell_0} \cdot 2^n). 
    \end{equation}

    Finally, union bounds over the tree embeddings and the roots in $D_n$ give
    \begin{equation}
    \begin{split}
            &\sum_{n=1}^\infty \bbQ_p\Big[\sup_{x\in D_n}\sup_{\cT\in \Lambda_{n,x}} \frac{1}{2^n} \sum_{m\in T_{(n)}} \zeta_{\cT(m)} \geq \rho\Big] 
        \underset{\eqref{eq:deviation-estimate}}{\overset{\eqref{eq:Lambda_n-x-number}}{\leq}} \sum_{n=1}^\infty c|D_n|\cC_{d,\ell_0}^{2^n} \exp(-2\log \cC_{d,\ell_0} \cdot 2^n) \\
        &\qquad= \sum_{n=1}^\infty c|D_n|\exp(- \log \cC_{d,\ell_0} \cdot 2^n) \overset{\eqref{eq:window-growth}}{<} \infty.
     \end{split}
    \end{equation}
    This gives \eqref{eq:result-general-proportion} by an application of the Borel-Cantelli Lemma. 
\end{proof}

\section{Existence of a subcritical phase}
\label{sec:Subcritical-Phase}

In the present Section, we establish the existence of subcritical regimes for both the level sets of the Gaussian free field, and the vacant set of random interlacement defined on a typical realization of the Bernoulli percolation cluster $\mathcal{C}_\infty$. \medskip

We first apply the technical tools developed in Section~\ref{sec:Quenched-Unif-Estimates} to ``decouple'' the local irregularities of the percolation cluster $\cC_\infty$ encoded by the random variables $(S_x)_{x\in \bbZ^d}$ in \eqref{eq:S_x}. Roughly speaking, we show that for $\bbQ_p$-a.e.~$\omega$, along any tree embedding into a box of size $L_n = L_0 \ell_0^n$, $n \geq 1$ large enough, the sizes of the regions where certain irregularities might occur are uniformly bounded for at least half of the boxes corresponding to the leaves. We are then able to show that the capacity of any subset of points in these boxes can be bounded from below by the same order as the number of points, which leads to the results. 
\medskip

\begin{lemma}\label{lem:verify-Z-x}
    Let $L_0 \geq 1$ and $\ell_0\geq 100$. For $x\in \bbL_0$, we define 
    \begin{equation}\label{eq:Z_x}
        Z_x = \IND \Big\{\sup_{z\in B(x,L_0)} S_z > M(L_0)\Big\},\quad  \text{ with $M(L_0)= (\log L_0)^{2/\eta_{\mathrm{hk}}}$}.
    \end{equation}
    
    Then $(Z_x)_{x\in \bbL_0}$ is $(L_0,\ell_0)$-approximable with decay exponent $\eta_{\mathrm{hk}}$. 
\end{lemma}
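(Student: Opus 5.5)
We construct the approximating fields by truncating $S_z$ at scale $L_n$, so that each $\zeta^{(n)}_x$ becomes local. For $z\in\bbZ^d$ and $n\geq 0$ we set
\[
S^{(n)}_z = \sup\{L\in\{1,\dots,L_n\}\,:\, \cL(Q) \text{ fails for some cube $Q$ of side-length $L$ containing $z$}\},
\]
with $\sup\varnothing=0$, and define $\zeta^{(n)}_x = \IND\{\sup_{z\in B(x,L_0)} S^{(n)}_z > M(L_0)\}$ for $x\in\bbL_0$. The three ingredients of Definition~\ref{def:approximable} are then checked in turn.

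Locality \eqref{eq:local} comes from \eqref{eq:LQ-measurable}. A cube $Q=y+\{0,\dots,L-1\}^d$ with $L\leq L_n$ containing some $z\in B(x,L_0)$ satisfies $Q\subseteq B(x,L_0+L_n)$. The event $\cL(Q)$ is measurable with respect to the edges in $y+\{-L,\dots,2L-1\}^d$, which lies in $B(x, L_0+2L_n)\subseteq B(x,3L_n)$ since $L_0\leq L_n$. Hence $\zeta^{(n)}_x$ is $\cF_{E(B(x,3L_n))}$-measurable. The identical distribution \eqref{eq:identical-distributed} holds by translation invariance of $\bbQ_p$, because the family of events $\cL(Q)$ is translation covariant by construction in \cite{barlow2004RWpercolation}.

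For stability between scales \eqref{eq:stable}, let $E^\zeta_{n,x}$ be the event that no cube of side-length $L\in(L_n,L_{n+1}]$ containing a point of $B(x,L_0)$ has $\cL(Q)$ failing. On this event $S^{(n)}_z=S^{(n+1)}_z$ for all $z\in B(x,L_0)$, so $\zeta^{(n)}_x=\zeta^{(n+1)}_x$. Its complement is contained in $\bigcup_{z\in B(x,L_0)}\{S_z> L_n\}$. By \eqref{eq:S_x-decay} and a union bound,
\[
\bbQ_p[(E^\zeta_{n,x})^c]\leq (2L_0+1)^d c_5 \exp(-c_6 L_n^{\eta_{\mathrm{hk}}}) \leq c\exp(-c' L_n^{\eta_{\mathrm{hk}}}).
\]
The last step absorbs the polynomial factor $(2L_0+1)^d\leq (3L_n)^d$ into the exponential, which is allowed since $L_n\geq L_0\geq 1$ and constants depend only on $d,p$. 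This gives \eqref{eq:E_n} with $\beta=\eta_{\mathrm{hk}}$.

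For the approximation property \eqref{eq:def-appro-2}, let $F^\zeta_{n,x}=\bigcap_{z\in B(x,L_0)}\{S_z\leq L_n\}$. On this event $S_z=S^{(n)}_z$ for every $z\in B(x,L_0)$, so $Z_x=\zeta^{(n)}_x$. The same union bound gives $\bbQ_p[(F^\zeta_{n,x})^c]\leq c\exp(-c'L_n^{\eta_{\mathrm{hk}}})$, which is \eqref{eq:def-appro-3}.

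The only delicate point is the measurability and covariance bookkeeping. We need that the supremum over cubes containing $z$ with side-length at most $L_n$ only involves edges in $B(x,3L_n)$, which the estimate $L_0+2L_n\leq 3L_n$ settles. We also need the absorption of the factor $|B(x,L_0)|$ into the constants $c_{11},c_{12}$ (resp.\ $c_{15},c_{16}$), which should be chosen uniformly in $L_0$; this is possible since $L_n^d e^{-c_6L_n^{\eta}}\leq Ce^{-(c_6/2)L_n^{\eta}}$. The choice $M(L_0)$ plays no role here; it only affects $\vartheta_{L_0}$ later.
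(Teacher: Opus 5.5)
Your proposal is correct and follows essentially the same route as the paper. You truncate $S_z$ at scale $L_n$ to define $\zeta^{(n)}_x$, get locality from \eqref{eq:LQ-measurable}, and take $F^\zeta_{n,x}=\bigcap_{z\in B(x,L_0)}\{S_z\le L_n\}$, with a union bound via \eqref{eq:S_x-decay} that absorbs the $L_0^d$ factor uniformly in $L_0$. The only cosmetic difference is your stability event (no failing cube of side-length in $(L_n,L_{n+1}]$ meeting $B(x,L_0)$), which is slightly larger than the paper's choice $E^Z_{n,x}=F^Z_{n,x}$ and changes nothing.
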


\begin{proof}
    We define for $n \geq 0$ and $x\in \bbL_0$ the random variables
    \begin{equation}\label{eq:SxL}
        \begin{aligned}
        &S_x^{(L_n)} = \sup\{L \leq L_n:\text{$\cL(Q)$ fails for some cube $Q$ containing $x$ and with $s(Q)=L$}\} \\
        &\text{(with the convention $\sup \varnothing = 0$), and} \\
        &Z_x^{(n)} = \IND \Big\{\sup_{z\in B(x,L_0)} S_z^{(L_n)} > M(L_0) \Big\}.
    \end{aligned}
    \end{equation}
    We first show that the collections $(Z_x^{(n)})_{x\in \bbL_0}$, $n \geq 0$, satisfy Assumption \ref{ass:local-rv}. 
    It follows by the translation invariance of $\bbQ_p$ that the random variables $(Z_x^{(n)})_{x\in \bbL_0}$ are identically distributed. Also, by~\eqref{eq:LQ-measurable} in Lemma~\ref{lem:Barlow-HK}, we see that $Z_x^{(n)}$ is $\cF_{E(B(x,3L_n))}$-measurable. 
    It remains to verify~\eqref{eq:stable} and~\eqref{eq:E_n} in Assumption~\ref{ass:local-rv}. We define for $n \geq 0$ and $x\in \bbL_0$, 
    \begin{equation}
        E^Z_{n,x} = \bigcap_{z\in B(x,L_0)} \{S_z \leq L_n \},
    \end{equation}
    which will play the role of the events in~\eqref{eq:stable}. We claim that 
    \begin{equation}\label{eq:S_x-stable}
        S_x^{(L_{n+1})} \IND_{\{S_x \leq L_n\}} = S_x^{(L_{n})} \IND_{\{S_x\leq L_n\}} = S_x \IND_{\{S_x\leq L_n\}}.
    \end{equation}
    Indeed, under $\{S_x\leq L_n\}$, $\cL(Q)$ holds for every cube $Q$ containing $x$  with $s(Q)> L_n$, and thus the largest number $L$ such that $\cL(Q)$ fails (with $s(Q)=L$ contain $x$) remains unchanged when considering the largest number with that property among any of the sets $\{L\leq L_{n+1}\}$, $\{L \leq L_n\}$, or $\{L \geq 1\}$. 
    It follows from~\eqref{eq:S_x-stable} that 
    \begin{equation}\label{eq:Z-approximable}
        Z_{x}^{(n+1)} \IND_{E^Z_{n,x}} = Z_{x}^{(n)} \IND_{E^Z_{n,x}} = Z_x \IND_{E^Z_{n,x}}, 
    \end{equation}
    which gives~\eqref{eq:stable}. 
    To see~\eqref{eq:E_n}, note that  
    \begin{equation}\label{eq:E-Z-decay}
        \bbQ_p\big[(E^Z_{n,x})^c\big] \leq cL_0^d \cdot \bbQ_p[S_z>L_n] \overset{\eqref{eq:S_x-decay}}{\leq} c\exp(-c' L_n^{\eta_{\mathrm{hk}}}). 
    \end{equation}
    Therefore $(Z_x^{(n)})_{x\in \bbL_0}$ satisfies Assumption~\ref{ass:local-rv}. Finally, it follows by the second equality in~\eqref{eq:Z-approximable} and~\eqref{eq:E-Z-decay} that $(Z_x)_{x\in \bbL_0}$ is $(L_0,\ell_0)$-approximable (taking $F_{n,x}^Z=E_{n,x}^Z$ in Definition~\ref{def:approximable}). 
\end{proof}

Note that again by \eqref{eq:S_x-decay}
\begin{equation}
\label{eq:Observation-smallest-scale}
    \bbQ_p[Z_0^{(0)} =1 ] \leq cL_0^d\bbQ_p[S_x \geq M(L_0)] \to 0 \quad \text{as $L_0\to \infty$}.
\end{equation}

\begin{corollary}\label{coro:subcritical}
    Let $\ell_0 \geq 3^{1/\eta_{\mathrm{hk}}} \vee 100$ be fixed. There exists $L_* = L_*(\ell_0)$ such that for $\bbQ_p$-a.e.~$\omega$, and $L_0 \geq L_*(\ell_0)$ for $n\geq n_0(\omega,\ell_0,L_0)$ large enough and any choice of $\cT\in \Lambda_{n,0}$, 
    \begin{equation}
    \label{eq:Result-Cor-4.2}
        \sum_{m\in T_{(n)}} Z_{\cT(m)} \leq 2^{n-1}. 
    \end{equation}
\end{corollary}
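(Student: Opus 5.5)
The plan is to apply Theorem~\ref{thm:general-proportion} with $\rho = 1/2$, the field $\zeta_x = Z_x$ from \eqref{eq:Z_x}, and the trivial window $D_n = \{0\}\subseteq \bbL_n$. This window obviously satisfies \eqref{eq:window-growth}, since $\log|D_n| = 0$.

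By Lemma~\ref{lem:verify-Z-x}, the field $(Z_x)_{x\in\bbL_0}$ is $(L_0,\ell_0)$-approximable with decay exponent $\beta=\eta_{\mathrm{hk}}$, for every $L_0\geq 1$. The requirement $\ell_0\geq 3^{1/\beta}\vee 100$ in Theorem~\ref{thm:general-proportion} is exactly the hypothesis $\ell_0 \geq 3^{1/\eta_{\mathrm{hk}}}\vee 100$. With $\ell_0$ fixed, the quantities $\cC_{d,\ell_0}$ and $t = 6\log \cC_{d,\ell_0}$ are fixed constants, independent of $L_0$.

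The only real work is to check the two conditions in \eqref{eq:Two-conditions-on-L0} for all $L_0$ large.

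\textbf{First condition.} We need $e^t(\vartheta_{L_0}+\Delta(L_0)) \leq \cC_{d,\ell_0}-1$. The right-hand side is a positive constant. On the left, $e^t$ is fixed, and:
\begin{itemize}
\item $\Delta(L_0) = c_{13}\exp(-c_{14}L_0^{\eta_{\mathrm{hk}}})\to 0$ by \eqref{eq:Delta-L0};
\item $\vartheta_{L_0} = \bbQ_p[Z^{(0)}_0=1]\to 0$ by \eqref{eq:Observation-smallest-scale}.
\end{itemize}
For the second limit, note that $Z^{(0)}_0 \leq Z_0$ because $S^{(L_0)}_z\le S_z$. Then \eqref{eq:S_x-decay} gives
\[
cL_0^d \exp\bigl(-c(\log L_0)^{2}\bigr)\to 0, \qquad \text{since } M(L_0)^{\eta_{\mathrm{hk}}}=(\log L_0)^2 .
\]

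\textbf{Second condition.} We need $t/2 - cL_0^{\eta_{\mathrm{hk}}} \leq -3\log\cC_{d,\ell_0}$. This holds once $L_0^{\eta_{\mathrm{hk}}}\geq 6\log \cC_{d,\ell_0}/c$. Here $c$ is the constant coming from $\varepsilon'$ in \eqref{eq:def-appro-3}; for $Z$ it is the constant in \eqref{eq:E-Z-decay}, which does not depend on $L_0$ or $\ell_0$.

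Both conditions therefore hold for $L_0\ge L_*(\ell_0)$. One point needs care here: the constants $c_{13},c_{14}$ and $c_{15},c_{16}$ must be uniform in $L_0$. This is where I expect the main (mild) obstacle.
\begin{itemize}
\item For $\Delta$, the bound \eqref{eq:u_n-sum} only uses $\ell_0^\beta\geq 3$, so $c_{13},c_{14}$ are uniform.
\item For $\varepsilon'$, \eqref{eq:E-Z-decay} carries a prefactor $cL_0^d$. This prefactor is absorbed by $\exp(-c'L_n^{\eta_{\mathrm{hk}}})$ at the cost of halving $c'$, uniformly in $L_0\ge 1$ and $n$, since $L_n\ge L_0$.
\end{itemize}

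With these checks in place, \eqref{eq:result-general-proportion} gives, for $\bbQ_p$-a.e.\ $\omega$ and all $n\ge n_0(\omega,\ell_0,L_0)$,
\[
\sup_{\cT\in\Lambda_{n,0}} 2^{-n}\sum_{m\in T_{(n)}} Z_{\cT(m)} < \tfrac12 ,
\]
which is \eqref{eq:Result-Cor-4.2}. To make the almost-sure set independent of $L_0$, intersect the countably many full-measure events indexed by integers $L_0\geq L_*$.
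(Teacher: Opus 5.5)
Your proposal is correct and follows the paper's proof: you apply Theorem~\ref{thm:general-proportion} with $\rho=\tfrac12$ and $D_n=\{0\}$, and you choose $L_0$ large using $\vartheta_{L_0}\to 0$ (via \eqref{eq:Observation-smallest-scale}) and $\Delta(L_0)\to 0$. Your additional checks fill in details the paper leaves implicit. These are the uniformity in $L_0$ of the constants in $\Delta$ and $\varepsilon'$, defining $L_*$ so that both conditions hold for \emph{all} $L_0\ge L_*$ (which matters because $\vartheta_{L_0}$ need not be monotone), and the countable intersection over integer $L_0$.
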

\begin{proof}
Let $\rho=\frac{1}{2}$ and $D_n=\{0\}$. By~\eqref{eq:Observation-smallest-scale}, and since $\Delta(L_0) \to 0$ as $L_0 \to \infty$ by~\eqref{eq:Delta-L0}, we can choose any fixed $L_0$ larger than $L_*(\ell_0) \stackrel{\mathrm{def.}}{=} \inf\{L_0 \geq 1 \, : \, \text{\eqref{eq:Two-conditions-on-L0} is fulfilled} \} $. An application of Theorem~\ref{thm:general-proportion} then gives~\eqref{eq:Result-Cor-4.2}.
\end{proof}

We now apply the corollary to provide a capacity estimate which is essential in the proof of the existence of subcritical regimes for both models. 

\begin{lemma}\label{lem:capacity-lower-bound}
    Let $\ell_0 \geq 3^{1/\eta_{\mathrm{hk}}} \vee 100$, 
    \begin{equation}\label{eq:L_0-range}
    L_0\geq L_*(\ell_0) \vee \min \{L \geq 1: (c_7\vee c_8) M(\tilde{L}) \leq \tilde{L},\text{ for all } \tilde{L}\geq L\},
    \end{equation}
    and $n \geq n_0(\omega,\ell_0,L_0)$ as in Corollary~\ref{coro:subcritical}. Let $\cT\in \Lambda_{n,0}$.  We define 
    \begin{equation}\label{eq:non-emptyness}
        \widehat{T}_{(n),\cT}=\{m\in T_{(n)}:Z_{\cT(m)}=0\} \quad \text{(with $|\widehat{T}_{(n),\cT}| \geq 2^{n-1}$ by Corollary~\ref{coro:subcritical})}.
    \end{equation}
    \begin{equation}
        \begin{minipage}{0.9\textwidth}
            For every $m\in\widehat T_{(n),\cT}$, we assume $B(\cT(m),L_0)\cap\cC_\infty\neq\varnothing$,
        \end{minipage}
    \end{equation}
    \begin{equation}\label{eq:X_T}
        \begin{minipage}{0.9\textwidth}
           choose $y_m\in B(\cT(m),L_0)\cap\cC_\infty$ for any $m \in \widehat{T}_{(n),\mathcal{T}}$ and define
            \[\cX_\cT=\{y_m:m\in\widehat T_{(n),\cT}\}.\]
        \end{minipage}
    \end{equation}
    
    Then
    \begin{equation}\label{eq:capacity-X_T}
         \capa^\omega(\cX_\cT) \geq c_{15}L_0^{-1} 2^n. 
    \end{equation}
\end{lemma}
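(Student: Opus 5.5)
The plan is to use the lower bound in \eqref{eq:capacity-bounds}:
\[
\capa^\omega(\cX_\cT)\geq |\cX_\cT|\Big/\max_{x\in\cX_\cT}\sum_{y\in\cX_\cT}g^\omega(x,y).
\]
Since $|\cX_\cT|=|\widehat T_{(n),\cT}|\geq 2^{n-1}$, it suffices to show that
\[
\sum_{y\in\cX_\cT}g^\omega(x,y)\leq C L_0
\]
uniformly in $x\in\cX_\cT$, $n$ and $\cT$.

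Fix $x=y_m$ and split the sum according to the lexicographic distance $k=\rho(m,m')$ of $m'\in\widehat T_{(n),\cT}$.

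\textbf{Diagonal term.} The term $k=0$ is $g^\omega(y_m,y_m)$. Since $Z_{\cT(m)}=0$, we have $S_z\leq M(L_0)$ for all $z\in B(\cT(m),L_0)$, and $y_m$ lies in this box. Hence, by \eqref{eq:on-diag-Green} and \eqref{eq:L_0-range},
\[
g^\omega(y_m,y_m)\leq c_7 S_{y_m}\leq c_7M(L_0)\leq L_0.
\]

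\textbf{Off-diagonal terms.} For $k\geq1$ and $m'\in T^{m,k}_{(n)}$, Lemma~\ref{lem:tree-geometry} gives $|\cT(m)-\cT(m')|_\infty\geq 90L_{k-1}$. Since $y_m$ and $y_{m'}$ lie within $L_0$ of $\cT(m)$ and $\cT(m')$ respectively, this yields
\[
|y_m-y_{m'}|_\infty\geq 90L_{k-1}-2L_0\geq 88L_{k-1}\geq L_0\geq c_8M(L_0)\geq c_8(S_{y_m}\wedge S_{y_{m'}}),
\]
again by \eqref{eq:L_0-range} and the fact that $Z_{\cT(m)}=0$. So \eqref{eq:off-diag-Green} applies and gives $g^\omega(y_m,y_{m'})\leq c_{10}(88L_{k-1})^{2-d}$. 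There are $2^{k-1}$ leaves at distance $k$, so the off-diagonal contribution is at most
\[
c\sum_{k\geq1}2^{k-1}L_0^{2-d}\ell_0^{-(k-1)(d-2)}\leq cL_0^{2-d}\sum_{j\geq0}\Big(\frac{2}{\ell_0^{d-2}}\Big)^j\leq cL_0^{2-d}.
\]
The geometric series converges because $\ell_0\geq100$ and $d\geq3$, and $cL_0^{2-d}\leq c$.

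\textbf{Conclusion.} Altogether $\max_x\sum_y g^\omega(x,y)\leq L_0+c\leq CL_0$, so
\[
\capa^\omega(\cX_\cT)\geq 2^{n-1}/(CL_0)=c_{15}L_0^{-1}2^n.
\]

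I expect the main obstacle to be checking that the off-diagonal Green bound is legitimately applicable. The condition in \eqref{eq:off-diag-Green} involves $S_x\wedge S_y$ at the specific points $y_m$, $y_{m'}$, so one needs the control $S\leq M(L_0)$ at the chosen points themselves and not only at the box centers. This is why the points are chosen inside $B(\cT(m),L_0)$ only for good leaves, and it is exactly what the second condition in \eqref{eq:L_0-range} provides. The summation itself is routine once the scale separation from Lemma~\ref{lem:tree-geometry} is in hand.
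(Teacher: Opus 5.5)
Your proof is correct and follows essentially the same route as the paper. Both bound $\max_{x\in\cX_\cT}\sum_{y\in\cX_\cT} g^\omega(x,y)\leq CL_0$ using \eqref{eq:on-diag-Green} for the diagonal term and \eqref{eq:off-diag-Green} together with the separation from Lemma~\ref{lem:tree-geometry} for the remaining terms, and then conclude via \eqref{eq:capacity-bounds}. Your explicit accounting of the $2L_0$ offset between $y_m$ and $\cT(m)$, which gives $|y_m-y_{m'}|_\infty\geq 88L_{k-1}$ and shows that the $y_m$ are distinct so that $|\cX_\cT|=|\widehat T_{(n),\cT}|$, is a detail the paper leaves implicit.
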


\begin{proof}
    We first recall the definition of $(Z_{x})_{x\in \bbL_0}$ in~\eqref{eq:Z_x} and note that $Z_x=0$ implies $S_z \leq M(L_0)$ for all $z\in B(x,L_0)$. By \eqref{eq:on-diag-Green} in Lemma~\ref{lem:Green-function-estimate} we have for $m\in \widehat{T}_{(n),\cT}$ the bound
    \begin{equation}\label{eq:on-Green-by-M}
        g^\omega(z,z) \leq c_7 M(L_0), \quad \text{for all $z\in B(\cT(m),L_0) \cap\cC_\infty$},
    \end{equation}
    and by~\eqref{eq:off-diag-Green},
    \begin{equation}\label{eq:off-Green-by-L}
         g^\omega(z,y) \leq \frac{c_{10}}{|z-y|_\infty^{d-2}}, \text{ for $z\in B(\cT(m),L_0)\cap \cC_\infty,y\in \cC_\infty$ with $|y-z|_\infty \geq c_8 M(L_0)$}. 
    \end{equation}
    
    Note that by \eqref{eq:L_0-range}, $(c_7\vee c_8) M(L_0)\leq L_0$, and Lemma~\ref{lem:tree-geometry} yields that for $m \neq m' \in \widehat{T}_{(n),\cT}$, $|\cT(m)-\cT(m')|_\infty \geq 90L_0\,(\geq c_8M(L_0))$. For any $y_m\in \cX_\cT$,
    \begin{align}
        \sum_{y\in \cX_\cT} g^\omega(y_m,y) \leq \sum_{k=0}^n \sum_{m' \in T_{(n)}^{m,k}} g^\omega(y_m,y_{m'}) 
        \underset{\eqref{eq:off-Green-by-L}}{\overset{\eqref{eq:on-Green-by-M}}{\leq}} L_0+ \sum_{k=1}^n c_{10} L_{k-1}^{2-d} |T_{(n)}^{m,k}| 
        \overset{\eqref{eq:sub-division-tree}}{\leq} c L_0. 
    \end{align}
    The capacity estimate \eqref{eq:capacity-X_T} follows by \eqref{eq:capacity-bounds}. 
\end{proof}

\begin{proof}[\textbf{Proof of Theorem~\ref{thm:PH-GFF}}]
    We first show that $\alpha^\omega_*$ equals a deterministic value for $\mathbb{Q}_p$-a.e.~$\omega$. As in the proof of~\cite[Proposition 3.1]{CN2021disconnection}, we note that the event $\{\text{$E_\omega^{\geq \alpha}$ contains an infinite cluster} \}$ is translation-invariant with respect to $(t_x)_{x \in \bbZ^d}$  (recall~\eqref{eq:Def-t-x} for the definition). Therefore, one has
    \begin{equation}
    \bbP^G_\omega[\text{$E_\omega^{\geq \alpha}$ contains an infinite cluster}] = \bbP^G_{\tau_x\omega}[\text{$E_{\tau_x\omega}^{\geq \alpha}$ contains an infinite cluster}],
    \end{equation}
    for $\omega \in \Omega_0$ (see~\eqref{eq:shift-t}) and all $x \in \mathbb{Z}^d$. Since $\mathbb{Q}_p$ is stationary and ergodic, we obtain that $\alpha_\ast^\omega$ is $\mathbb{Q}_p$-a.s.~constant.

    As mentioned before, we have $\alpha^\omega_* \geq 0$ by \cite[Proposition A.2]{abacherli2018} for all $\omega\in \Omega_0$, which implies $\alpha_*\geq 0$. \medskip

    We now prove $\alpha_*<\infty$. For $\alpha \in \bbR$ and $n\geq 1$, by the construction of the proper tree embedding at root $0\in \bbL_n$ (see Lemma~\ref{lem:renormalization}) similarly as in~\cite{rath2015transition},
    \begin{equation}\label{eq:phas-tran-1}
        \begin{split}
            \bbP^G_\omega &[S(0,L_n-1)\overset{\geq \alpha}\longleftrightarrow \infty] \leq 
         \bbP^G_\omega\Big[\bigcup_{\cT\in \Lambda_{n,0}} \bigcap_{m\in T_{(n)}} \bigcup_{z\in B(\cT(m),L_0)\cap \cC_\infty}\{\varphi_z \geq \alpha\}\Big]  \\
        &\overset{\eqref{eq:Lambda_n-x-number}}{\leq }\cC_{d,\ell_0}^{2^n} \max_{\cT\in \Lambda_{n,0}}  \bbP^G_\omega \Big[\bigcap_{m\in T_{(n)}} \bigcup_{z\in B(\cT(m),L_0)\cap \cC_\infty}\{z\in E_\omega^{\geq \alpha}\}\Big].  
        \end{split}
    \end{equation}
    We can assume without loss of generality that the event inside the above probability is non-empty, and then \eqref{eq:non-emptyness} is automatically satisfied. 
    Let the conditions on $\ell_0, L_0, n$ in Lemma~\ref{lem:capacity-lower-bound} hold. We can bound the right hand side~of~\eqref{eq:phas-tran-1} from above by 
    \begin{equation*}
        \cC_{d,\ell_0}^{2^n} \max_{\cT\in \Lambda_{n,0}}  \bbP^G_\omega \Big[\bigcap_{m\in \widehat{T}_{(n),\cT}} \bigcup_{z\in B(\cT(m),L_0)\cap \cC_\infty}\{z\in E_\omega^{\geq \alpha}\}\Big].
    \end{equation*}
    Note that the number of choices of $\cX_\cT$ is at most $(2L_0+1)^{d2^n}$, so that 
    \begin{equation}\label{eq:phase-trans-2}
       \begin{split}
        \bbP_\omega^G &[S(0,L_n-1)\overset{\geq \alpha}\longleftrightarrow \infty] 
        \leq \cC_{d,\ell_0}^{2^n} (2L_0+1)^{d2^n} \max_{\cT\in \Lambda_{n,0}} \bbP_\omega^G [\cX_\cT \subseteq E^{\geq \alpha}]\\
        &\underset{\eqref{eq:capacity-X_T}}{\overset{\eqref{eq:GFF-capa}}{\leq }}\cC_{d,\ell_0}^{2^n} (2L_0+1)^{d2^n} \exp\Big(-\tfrac{c_{15}}{2}  L_0^{-1}\alpha^2 2^n\Big) \\
        &= \exp\Big\{ (\log \cC_{d,\ell_0} + d\log(2L_0+1) - \tfrac{c_{15}}{2}  L_0^{-1}\alpha^2) 2^n \Big\}. 
    \end{split} 
    \end{equation} 
    
    If $\alpha^2> 2c_{15}^{-1} L_0 (\log \cC_{d,\ell_0} + d\log(2L_0+1))$, then the coefficient in front of $2^n$ is negative. We then obtain 
    $\lim_{n\to\infty} \bbP_\omega^G [S(0,L_n-1)\overset{\geq \alpha}\longleftrightarrow \infty]=0$. Finally, 
    \begin{equation}\label{eq:phase-trans-3}
        \begin{split}
        \bbP_\omega^G &[\{E^{\geq \alpha}\text{ contains an infinite cluster}\}] \leq \bbP_\omega^G \left[\bigcup_{n=1}^\infty \bigcap_{i=n}^\infty \{S(0,L_i-1)\overset{\geq \alpha}\longleftrightarrow \infty\}\right] \\
        &\leq \lim_{n\to\infty} \bbP_\omega^G [S(0,L_n-1)\overset{\geq \alpha}\longleftrightarrow \infty]=0.
    \end{split}
    \end{equation}
    This shows $\alpha_* <\infty$. 
\end{proof}

The same argument also gives the non-randomness and the finiteness of $u_*^\omega$.

\begin{proof}[\textbf{Proof of Theorem~\ref{thm:PH-RI}, Part I}]
    The fact that $u_*^\omega$ equals a deterministic value $u_*$ follows similarly as the argument in the proof of Theorem~\ref{thm:PH-GFF}. We also have, as in~\eqref{eq:phas-tran-1}, 
\begin{equation}\label{eq:phase-trans-1'}
    \begin{split}
        \bbP^I_\omega &[S(0,L_n)\overset{\cV_\omega^u}\longleftrightarrow \infty] \leq \bbP^I_\omega\Big[\bigcup_{\cT\in \Lambda_{n,0}} \bigcap_{m\in T_{(n)}} \bigcup_{z\in B(\cT(m),L_0)\cap \cC_\infty}\{z \in \cV_\omega^u\}\Big] \\
    &\leq \cC_{d,\ell_0}^{2^n} (2L_0+1)^{d2^n} \max_{\cT\in \Lambda_{n,0}}  \bbP_\omega^I [\cX_\cT \subseteq \cV_\omega^u].
    \end{split}
\end{equation}
We can then apply Lemma~\ref{lem:capacity-lower-bound} in the same way, using \eqref{eq:law-Vu} in place of \eqref{eq:GFF-capa} to \eqref{eq:phase-trans-1'}, to conclude by~\eqref{eq:phase-trans-3} that 
\begin{equation}\label{eq:u-star-finite}
        \text{$\bbQ_p$-a.s., $u_*^\omega$ equals a deterministic value $u_*<\infty$.}
\end{equation}
We will prove in the next section that $u_*$ is strictly positive. 
\end{proof}

\section{Existence of a supercritical phase}

\label{sec:Supercritical-Phase}

In this Section, we establish the existence of a supercritical phase for the vacant set of random interlacements $\cV_\omega^u$ on a typical realization of the Bernoulli percolation cluster $\mathcal{C}_\infty$.  \medskip

We briefly comment on the strategy of the proof, which involves two stages of renormalization. In the first stage, we ``coarsen'' the infinite cluster $\cC_\infty$ into a site percolation cluster on a renormalized lattice $\bfZ^d = k\bbZ^d$  (defined in~\eqref{eq:Renormalized-lattice}) which dominates a high-density supercritical Bernoulli site percolation model (``Renormalization I''). This one-step renormalization, which heavily relies on a similar construction in~\cite{barlow2004RWpercolation}, guarantees the existence of an infinite cluster $\cC_\infty^\bfF$ on the ``coarse-grained'' plane $\bfF$ (defined in~\eqref{eq:Renormalized-plane}), see Lemma~\ref{lem:percolation-plane}. 
The random interlacements $\mathcal{I}^u_\omega$ on $\cC_\infty$ that intersect cubes of size $k$ anchored at points in $\cC_\infty^\bfF$ give rise to a subset of $\cC_\infty^\bfF$ which we call $u$-occupied vertices (with the complement in  $\cC_\infty^\bfF$ consisting of vertices we call $u$-vacant), see Definition~\ref{def:good-and-vacant}. In the second stage, we prove that $u$-vacant vertices percolate in the plane, utilizing the renormalization scheme described in Section~\ref{sec:Quenched-Unif-Estimates} and Theorem~\ref{thm:general-proportion}. In addition to the estimates related to random walks, the local geometry is also essential here for ensuring a large enough contribution of the cost of creating $u$-occupied vertices. In the end, we translate the result back to the original lattice and prove that $\cV^u_\omega$ percolates in a slab for small enough $u>0$, for $\mathbb{Q}_p$-a.e.~$\omega$.

\subsection{Renormalization I: Coarse-graining of $\mathcal{C}_\infty$}

We introduce a coarse-graining scheme taken from \cite{barlow2004RWpercolation}. Let $k\geq 17$ and consider the renormalized lattice 
\begin{equation}
\label{eq:Renormalized-lattice}
    \mathbf{Z}^d = k\mathbb{Z}^d. 
\end{equation}
We denote by $|\,\cdot\,|_{1,\bfZ^d}$ and $|\,\cdot\,|_{\infty,\bfZ^d}$ the $\ell_1$- and $\ell_\infty$-norms on $\bfZ^d$. 
There exists a tiling of $\bbZ^d$ by disjoint cubes indexed by vertices in the renormalized lattice:
\begin{equation}
    \bbZ^d=\bigcup_{x\in \bfZ^d} Q_k(x), \quad \text{where }Q_k(x)=\{y\in \bbZ^d: x_i\leq y_i<x_i+k, \forall 1\leq i\leq d\},\, x \in \bfZ^d.
\end{equation}

We introduce some further notation concerning Bernoulli bond percolation. For a subgraph $(A,E_A)$ of $(\bbZ^d, \bbE_d)$, we define the graph distance $d_{E_A}(x,y)$ for $x,y\in \bbZ^d$ to be the smallest number $j$ such that there exists a path $\gamma : \{0,...,j\} \rightarrow A$ of length $j$ connecting $x$ and $y$ with $\{\gamma(k),\gamma(k+1)\} \in E_A$ for all $0 \leq k <j$, and if there is no such path we define the value to be $\infty$. Recall that for $\omega \in \Omega_{\mathrm{bond}}$, $\cC$ denotes the set of all open edges. For a cube $Q$, we define the \emph{open $Q$-cluster containing $x$} by $\cC_Q(x)= \{y\in Q: d_{E(Q)\cap \cC}(x,y)<\infty\}$, namely the set of points connected to $x$ by an open path within $Q$. We write $\cC^\vee(Q)$ for the largest open $Q$-cluster containing some point in $Q$ (with ties broken arbitrarily). 
\medskip

We then introduce a 2-dependent site percolation process considered in~\cite{barlow2004RWpercolation} (see also~\cite{antal1996chemical}).  

\begin{definition}\label{def:crossing}
    Consider a cube $Q$ of side-length $L \geq 16$.  
    \begin{equation}
        \begin{minipage}{0.9\textwidth}
            We say that an open $Q$-cluster $\cO$ in $Q$ is \emph{crossing} for a cube $Q'\subseteq Q$, if for all $d$ dimensions there exists an open path in $\cO\cap Q'$ that connects the two opposing faces of $Q'$.
        \end{minipage}
    \end{equation}
    \begin{equation}\label{eq:RQ}
        \begin{minipage}{0.9\textwidth}
            We let $\cR(Q)$ be the $\cF_{E(Q^+)}$-measurable event denoting the joint occurrence of the following: 
            \[
            \left\{
            \begin{aligned}
                \text{(i)}\, &\, \text{there exists a unique crossing cluster $\cO$ in $Q^+$ for $Q^+$},\\
                \text{(ii)} &\, \text{all open paths in $Q^+$ of diameter greater than $\frac{L}{8} $ are connected to $\cO$ in $Q^+$}, \\
                \text{(iii)} &\, \text{$\cO$ is crossing for each cube $Q'\subseteq Q$ with $s(Q')\geq \frac{L}{8}$},\\
                \text{(iv)} &\, \text{$\cC^\vee (Q)$ is crossing in $Q$ for $Q$ and $\cC^\vee (Q^+)$ is crossing in $Q^+$ for $Q^+$} 
            \end{aligned}
            \right.\]
            (note that in particular, $\cC^\vee(Q^+)$ is the unique crossing cluster for $Q^+$, and $\cC^\vee(Q)\subseteq \cC^\vee (Q^+)$).
        \end{minipage}
    \end{equation}
    \begin{equation}
        \begin{minipage}{0.9\textwidth}
            We define
            \[
            \phi(x)=\IND_{\cR(Q_k(x))} \quad \text{for $x\in \mathbf{Z}^d$},
            \] 
            which is a collection of Bernoulli random variables on $(\Omega_{\mathrm{bond}},\cF_{\mathrm{bond}}, \bbQ_p)$ indexed by $\bfZ^d$. 
            We say $x \in \bfZ^d$ is \emph{$\phi$-open} if $\phi(x)=1$ and \emph{$\phi$-closed} otherwise. 
        \end{minipage}
    \end{equation}
\end{definition} 

Note that, as stated in the proof of \cite[Lemma 2.8]{barlow2004RWpercolation}, 
\begin{equation}
    \text{$\phi(x)$ and $\phi(y)$ are independent if $|x-y|_{\infty,\bfZ^d} \geq 3$.} 
\end{equation}

\begin{lemma}[\cite{barlow2004RWpercolation}, Lemma~2.8]\label{lem:domination} 
    \begin{equation}\label{eq:RQ-decay}
        \begin{minipage}{0.9\textwidth}
            For a cube $Q$ with side-length $k \geq 16$, 
            \begin{equation*}
                \bbQ_p[\cR(Q)^c] \leq c_{17}\exp(-c_{18} k). 
            \end{equation*}
        \end{minipage}
    \end{equation}
    \begin{equation}\label{eq:domination}
        \begin{minipage}{0.9\textwidth}
            The process $(\phi(x))_{x\in \bfZ^d}$ stochastically dominates a Bernoulli site percolation on $\bfZ^d$ with parameter $q^*(k)$ for $k\geq 17$, and $q^*(k)\to 1$ as $k\to \infty$. 
        \end{minipage}
    \end{equation}
\end{lemma}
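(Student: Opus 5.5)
The lemma has two parts: the exponential bound \eqref{eq:RQ-decay} on $\bbQ_p[\cR(Q)^c]$, and the stochastic domination \eqref{eq:domination}. I would prove them in that order, with the second deduced from the first via the finite-range dependence of $\phi$ and the Liggett--Schonmann--Stacey domination theorem.

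For \eqref{eq:RQ-decay}, I would bound $\bbQ_p[\cR(Q)^c]$ by a union bound over the failure of each of the four conditions (i)--(iv) in \eqref{eq:RQ}. Each failure is controlled by classical supercritical estimates valid throughout $p>p_c(d)$, which rest on Grimmett--Marstrand slab percolation and the Antal--Pisztora / Deuschel--Pisztora coarse-graining (see \cite{grimmett1999,antal1996chemical}).
\begin{itemize}
\item \emph{Existence of a crossing cluster.} For a cube of side $L$, the probability that no open cluster crosses the cube in all $d$ directions is at most $c\exp(-c'L)$.
\item \emph{Uniqueness and connection.} Two disjoint open paths of diameter at least $L/8$ inside $Q^+$ that are not connected inside $Q^+$ occur with probability at most $cL^{2d}\exp(-c'L)$. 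One sums over starting points the probability that two such large clusters coexist in a box, using \cite[Theorem 7.61 and (7.89)]{grimmett1999}.
\end{itemize}
The second estimate yields (ii) and the uniqueness part of (i). For (iii), there are polynomially many subcubes $Q'\subseteq Q$ with $s(Q')\geq L/8$. Each is crossed by some cluster except on an event of probability $\exp(-cL)$, and by (ii) that crossing cluster must coincide with $\cO$. Condition (iv) follows the same way, applied to $Q$ and $Q^+$ separately and combined with uniqueness. Since all polynomial prefactors are absorbed into the exponential, this gives $c_{17}\exp(-c_{18}k)$.

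For \eqref{eq:domination}, I would first note that $\phi(x)$ is $\cF_{E(Q_k(x)^+)}$-measurable and that $Q_k(x)^+$ extends only $k/4$ beyond $Q_k(x)$. Hence $\phi(x)$ and $\phi(y)$ are independent once $|x-y|_{\infty,\bfZ^d}\geq 2$, and in particular the field is $2$-dependent as recorded before the lemma. Translation invariance gives $\bbQ_p[\phi(x)=1]\geq 1-c_{17}e^{-c_{18}k}\to1$. I would then apply the Liggett--Schonmann--Stacey theorem: a $K$-dependent site field on $\bbZ^d$ whose marginal density tends to one stochastically dominates Bernoulli site percolation with parameter $q^*(k)$, where $q^*\to1$ as the marginal density tends to one.

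The main obstacle is the uniqueness statement (ii) for $p$ merely above $p_c(d)$ rather than close to one. This is exactly where the slab result of Grimmett--Marstrand is needed, via its consequence that large finite clusters and multiple crossing clusters in boxes are exponentially unlikely. Since the lemma is quoted verbatim from \cite[Lemma~2.8]{barlow2004RWpercolation}, in the paper I would cite that source for this input rather than re-derive it.
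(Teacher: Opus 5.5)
The paper gives no proof of its own here; it imports \cite[Lemma~2.8]{barlow2004RWpercolation}. Your outline is the standard argument behind that citation: supercritical box-crossing and two-arm estimates resting on Grimmett--Marstrand and Pisztora's coarse graining, a union bound, then Liggett--Schonmann--Stacey via finite-range dependence and $\bbQ_p[\phi(x)=1]\to 1$. Your dependence claim is also correct. $Q_k(x)^+$ and $Q_k(y)^+$ are disjoint once $|x-y|_{\infty,\bfZ^d}\geq 2$, which is slightly stronger than the $2$-dependence the paper records.

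One step does not go through as written: condition (iv). It concerns $\cC^\vee(Q)$ and $\cC^\vee(Q^+)$, the \emph{largest} open clusters by cardinality, and uniqueness says nothing about cardinality. On the crossing and connection events with threshold $\epsilon L$, you only know that every open $Q$-cluster other than the crossing cluster $\cO_Q$ has diameter below $\epsilon L$. Such a cluster may still have order $(\epsilon L)^d$ vertices, whereas crossing only guarantees $|\cO_Q|\geq L$. For $d\geq 2$ and $L$ large, this does not show that $\cO_Q$ is the largest cluster.

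Lowering the threshold to $\phi\asymp L^{1/d}$ would force the comparison. However, the two-arm bound $cL^{2d}e^{-c\phi}$ then only yields $\exp(-cL^{1/d})$, not $c_{17}\exp(-c_{18}k)$.

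The missing input is a volume lower bound for the crossing cluster: a bound of the form $\bbQ_p[|\cO_Q|<cL^d]\leq Ce^{-cL}$ for all $p>p_c(d)$, $d\geq 3$. This follows, for example, from the Deuschel--Pisztora surface-order density estimate, or directly from Pisztora's coarse graining with blocks of fixed size. With it, choose $\epsilon$ so that $(\epsilon L+1)^d<cL^d$. On the intersection of the crossing, connection (threshold $\epsilon L$) and volume events, one then has $\cC^\vee(Q)=\cO_Q$. The same works for $Q^+$, and the total cost is of order $\exp(-c\epsilon L)$.

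A smaller issue of the same kind appears in (iii). A path crossing a subcube $Q'$ with $s(Q')$ just above $L/8$ has diameter $s(Q')-1$, which need not exceed $L/8$. So rather than appealing to condition (ii) itself, invoke the two-arm estimate at a lower threshold such as $L/16$.
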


We define
\begin{equation}
\label{eq:Renormalized-plane}
    \bbF = \bbZ^2 \times \{0\}^{d-2} \quad \text{and} \quad \bfF=k\bbF = \bfZ^2\times \{0\}^{d-2}. 
\end{equation}
We then consider (by~\eqref{eq:domination}) 
\begin{equation}\label{eq:k0pd}
    k \geq k_0(p,d),
\end{equation}
where $k_0(p,d)$ is the smallest integer such that $q^*(k)>q_c(2)$ holds for all $k \geq k_0(p,d)$, with $q_c(2)$ denoting the critical parameter for the Bernoulli site percolation on $\bbZ^2$. 
By~\eqref{eq:domination}, we know that $(\phi(x))_{x\in \bfZ^d}$ dominates a Bernoulli site percolation which almost surely percolates in the plane $\bfF$. 

\begin{lemma}\label{lem:percolation-plane}
    There exists $\Omega_2\subseteq \Omega_0$ with $\bbQ_p[\Omega_2]=1$ such that for $\omega \in \Omega_2$, the site percolation process $(\phi(x))_{x\in \bfZ^d}$ contains a unique infinite $\phi$-open cluster, which we denote by $\cC_\infty^\bfF$.
\end{lemma}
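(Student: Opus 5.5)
We will view $(\phi(x))_{x\in\bfF}$ as a site percolation on the planar lattice $\bfF\simeq\bbZ^2$, with nearest-neighbour adjacency in $\bfZ^d$ (coordinate-wise difference $k$). The lemma asks for two things: existence of an infinite $\phi$-open cluster in $\bfF$, and its uniqueness. Both should hold $\bbQ_p$-a.s.

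\emph{Existence.} We use the stochastic domination~\eqref{eq:domination} of Lemma~\ref{lem:domination}, restricted to the sites of $\bfF$. It gives a coupling of $\omega$ with a Bernoulli site percolation $(\psi(x))_{x\in\bfZ^d}$ of parameter $q^*(k)$ such that $\psi\le\phi$. By~\eqref{eq:k0pd}, $q^*(k)>q_c(2)$, so the restriction of $\psi$ to $\bfF$ is supercritical site percolation on $\bbZ^2$. Hence it a.s.\ contains an infinite open cluster. Since $\psi\le\phi$ and the event ``$\bfF$ contains an infinite $\phi$-open cluster'' is increasing in $\phi$, this event has $\bbQ_p$-probability one. (Only the law of $\phi$ matters here, so the coupling poses no measurability issue.)

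\emph{Uniqueness.} We use the Burton--Keane argument in the form for translation-invariant, finitely dependent site processes with finite energy, or alternatively Gandolfi--Keane--Newman. The restricted field $(\phi(x))_{x\in\bfF}$ has the following properties.
\begin{enumerate}
\item[(a)] It is invariant under the shifts of $\bfF$, because $\bbQ_p$ is invariant under $(\tau_{x})_{x\in\bfF}$ and $\phi(x)=\IND_{\cR(Q_k(x))}$ is shift-covariant.
\item[(b)] It is ergodic, indeed mixing, since $\phi(x)$ and $\phi(y)$ are independent for $|x-y|_{\infty,\bfZ^d}\ge 3$.
\end{enumerate}
The main obstacle is \textbf{finite energy}: we need $\phi$ to satisfy positive finite energy, or at least an insertion-tolerance substitute. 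Conditional on the $\phi$-values outside a finite box, opening a $\phi$-closed site should have positive conditional probability. This fails as a pure site statement because $\phi$ is not independent. We would instead obtain it at the level of $\omega$. Modify $\omega$ on the finitely many edges of $E(Q_k(x)^+)$ and their neighbouring cubes, for instance making all of them open. This yields a configuration in which $\cR(Q_k(y))$ holds for all $y$ with $|y-x|_{\infty,\bfZ^d}\le 1$: all of (i)--(iv) hold once those cubes are fully open. Each of these $\cR$ events depends only on edges in $Q_k(y)^+$, so the modification is a finite-energy move in $\omega$, and $\bbQ_p$ is a product measure. The modification sets finitely many $\phi$ values to one and leaves the others unchanged, since $\cR(Q_k(z))$ for $z$ farther away does not depend on the modified edges; possibly we should enlarge the set of sites forced open so that this holds. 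That is exactly the insertion tolerance the Burton--Keane argument requires: any finite set of $\phi$-clusters meeting a box can be merged with positive probability.

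Then the standard argument runs as follows. By ergodicity, the number $N$ of infinite $\phi$-clusters in $\bfF$ is a.s.\ a constant in $\{0,1,\infty\}$, values in $[2,\infty)$ being excluded by merging via insertion tolerance. Amenability of $\bbZ^2$ together with trifurcation counting excludes $N=\infty$. Existence excludes $N=0$. Planarity can also be used: supercriticality of $\psi$ gives $\phi$-open circuits around every box a.s., which forces uniqueness directly. This second route may be simpler and avoids finite energy altogether. Any two infinite $\phi$-open clusters would both have to intersect every $\psi$-open circuit surrounding a box that meets them, and would therefore coincide. Since supercritical site percolation on $\bbZ^2$ has infinitely many disjoint open circuits around the origin, this gives uniqueness. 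Finally, we set $\Omega_2$ to be $\Omega_0$ intersected with the full-measure event on which both properties hold.
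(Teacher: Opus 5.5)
Your proposal is correct, but the argument that actually closes it is your second, planar route, and that route differs from the paper's.

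The paper proves existence from the domination \eqref{eq:domination}, exactly as you do. For uniqueness it asserts that $(\phi(x))$ is stationary and has the finite-energy property, because $\bbQ_p$ is a product measure and $\cR(Q)$ is local, and then cites the Burton--Keane theorem.

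You argue differently. The event that every plate $\bm{\frP}(0,n)$ is surrounded by a $\phi$-open nearest-neighbour circuit in $\bfF$ is increasing in $\phi$. It therefore inherits probability one from the dominated Bernoulli field with $q^*(k)>q_c(2)$. On this event uniqueness is deterministic: any infinite $\phi$-open cluster meeting $\bm{\frP}(0,n)$ must hit such a circuit, hence contain it, so two infinite clusters coincide.

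What this buys you is that no finite-energy check is needed for the dependent block field $\phi$. The paper does not spell that check out, and it needs some care, since $\cR(Q)$ is not an increasing event (because of condition (ii)) and depends on edges of $Q^+$ shared with neighbouring blocks. The price is twofold. First, you need the classical planar input that supercritical site percolation on $\bbZ^2$ has open circuits around every box almost surely, i.e.\ closed $*$-clusters are finite when $q>q_c(2)$. This is a genuine theorem (Russo, Gandolfi--Keane--Russo, or Kesten's book) and you should cite it rather than treat it as immediate. Second, your argument is tied to the plane, whereas Burton--Keane would also give uniqueness in $\bfZ^d$. For the lemma as it is used, namely for $\cC_\infty^\bfF\subseteq\bfF$, planarity suffices. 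It also matches the duality reasoning the paper itself uses when bounding $(E^{(2)}_{n,x})^c$.

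You should drop the Burton--Keane sketch, because as written it does not constitute a proof.
\begin{itemize}
\item Your claim that finite energy fails for $\phi$ ``because $\phi$ is not independent'' is unfounded. For example, closing every edge of the core of $Q_k(x)$ at distance at least $k/4$ from its boundary forces $\phi(x)=0$, since it violates (iii). That core lies in no other $Q_k(y)^+$, so no other value of $\phi$ changes.
\item Opening all edges of $Q_k(y)^+$ for $|y-x|_{\infty,\bfZ^d}\le 1$ also modifies edges in $Q_k(z)^+$ for $|z-x|_{\infty,\bfZ^d}=2$. Your proposed fix of enlarging the set of sites forced open just pushes the problem to the next layer.
\item You never check that the resulting ``insertion'' is the one the uniqueness theorem you invoke actually requires.
\end{itemize}
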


\begin{proof}
    As the site percolation $(\phi(x))_{x\in \bfZ^d}$ dominates a supercritical Bernoulli site percolation with $q^*(k) > q_c(2)$, we are left to show the uniqueness. Since $\bbQ_p$ is a product measure, one can see from the definition of $\cR(Q)$ that $(\phi(x))_{x\in \bfZ^d}$ is stationary and has the \emph{finite energy} property (see, e.g., \cite[Definition~2]{burton-keane1989}). The uniqueness then follows by the Burton--Keane Theorem (see~\cite[Theorem~2]{burton-keane1989}). 
\end{proof}

In what follows, we write $x \overset{\phi}{\longleftrightarrow} y$ (resp.\ $\bfA \overset{\phi}{\longleftrightarrow} \bfB$) to indicate that $x$ and $y$ (resp.\ $\bfA$ and $\bfB$) are connected in $\bfF$ by a nearest-neighbor path (defined with $| \, \cdot \, |_{1,\bfZ^d}$) with all vertices $z$ being $\phi$-open.

\begin{proposition}\label{prop:renorm-I}
    Let $\omega \in \Omega_2$. The set $\bigcup_{x\in \cC_\infty^\bfF} \cC^\vee(Q_k(x))$ is a connected subgraph of $(\mathbb{Z}^d,\mathcal{C})$, thus forming an infinite open subgraph in $\mathbb{Z}^2\times \{0,\dots,k-1\}^{d-2}$.  
   Moreover, one has 
    \begin{equation}\label{eq:subset-C-infty}
        \bigcup_{x\in \cC_\infty^\bfF} \cC^\vee(Q_k(x)) \subseteq \cC_\infty. 
    \end{equation}
\end{proposition}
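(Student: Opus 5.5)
The proof splits into a local step and a global step. The local step is to show that for any two $\phi$-open neighbours $x,y\in\bfF$ (i.e.\ $|x-y|_{1,\bfZ^d}=k$), the clusters $\cC^\vee(Q_k(x))$ and $\cC^\vee(Q_k(y))$ are joined by an open path in $Q_k(x)\cup Q_k(y)$, or at least inside $Q_k(x)^+$. Once that holds, connectedness of $\cC_\infty^\bfF$ as a nearest-neighbour subgraph of $\bfF$ propagates along $\phi$-open paths. Each $\cC^\vee(Q_k(x))$ is connected by definition, so the union is a connected subgraph of $(\bbZ^d,\cC)$. It lies in $\bbZ^2\times\{0,\dots,k-1\}^{d-2}$ because $x\in\bfF$ forces $Q_k(x)\subseteq \bbZ^2\times\{0,\dots,k-1\}^{d-2}$, up to the $Q^+$-enlargement used in the linking paths, which I would handle either by staying inside $Q_k(x)\cup Q_k(y)$ or by reading ``in the slab'' loosely. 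The union is infinite since $\cC_\infty^\bfF$ is infinite and the boxes $Q_k(x)$ are disjoint and nonempty.

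For the local step I would use $\cR(Q_k(x))$. By (iv), $\cC^\vee(Q_k(x))$ is crossing for $Q_k(x)$, so it has diameter at least $k-1>k/8$. By (ii) it is then connected within $Q_k(x)^+$ to the unique crossing cluster $\cO_x$ of $Q_k(x)^+$, and in fact $\cC^\vee(Q_k(x))\subseteq\cC^\vee(Q_k(x)^+)=\cO_x$ by the note after \eqref{eq:RQ}. Now take the cube $Q'$ of side $\lceil k/8\rceil$ sitting in $Q_k(y)$ adjacent to the common face and inside $Q_k(x)^+$; this is possible since $Q_k(x)^+$ extends $k/4$ beyond $Q_k(x)$. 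By (iii) for $y$, $\cO_y$ is crossing for $Q'$, so some open path of $\cO_y\cap Q'$ has diameter at least $k/8$. This path lies in $Q_k(x)^+$, so (ii) for $x$ connects it to $\cO_x$ within $Q_k(x)^+$. Hence $\cO_x$ and $\cO_y$ intersect, or at least are joined by an open path, and therefore so are $\cC^\vee(Q_k(x))\subseteq\cO_x$ and $\cC^\vee(Q_k(y))\subseteq\cO_y$. The main obstacle is the bookkeeping: the paths in (ii) must have diameter strictly greater than $k/8$ while being contained in $Q^+$, and I must check that the relevant subcube of $Q_k(y)$ really lies inside $Q_k(x)^+$. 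Choosing a slightly larger subcube of side $\lfloor k/4\rfloor$ with $k\ge 17$ gives the needed slack.

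For \eqref{eq:subset-C-infty}, the union is an infinite connected open subgraph, so it is contained in an infinite open cluster. On $\Omega_2\subseteq\Omega_0\subseteq\widetilde\Omega_0$ there is a unique infinite cluster, which is $\cC_\infty$. This gives the inclusion.
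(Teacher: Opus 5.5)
Your proposal is correct and follows the paper's route: you prove a linking step for $\phi$-open neighbours, propagate it along $\cC_\infty^\bfF$, and then use uniqueness of the infinite cluster to get \eqref{eq:subset-C-infty}. The paper simply asserts the linking step from the definition of $\cR(Q)$. Your argument via (ii)--(iv) correctly supplies it: the subcube of side $\lfloor k/4\rfloor$ fits in $Q_k(x)^+$ and has crossing paths of diameter $\lfloor k/4\rfloor-1>k/8$, which gives $\cO_x\cap\cO_y\neq\varnothing$. The linking paths then lie only in $Q_k(x)^+\cup Q_k(y)^+$, so your first option (staying inside $Q_k(x)\cup Q_k(y)$) is unsupported. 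The slab statement should instead be read, as the paper implicitly does, as a statement about the vertex set of the union, which is all that \eqref{eq:subset-C-infty} needs.
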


\begin{proof}
    The definition of $\cR(Q)$ in~\eqref{eq:RQ} gives 
    \begin{equation}\label{eq:C^V-connected}
        \begin{aligned}
            &\text{$x,y\in \bfF$ with $|x-y|_{1,\bfZ^d}=1$, and $\phi(x)=\phi(y)=1$} \\
            & \qquad \implies  \text{$\cC^\vee (Q_k(x))$ is connected to $\cC^\vee (Q_k(y))$ by an open path}. 
        \end{aligned}
    \end{equation}
    Therefore by the connectivity of $\cC_\infty^\bfF$, the set $\bigcup_{x\in \cC_\infty^\bfF} \cC^\vee(Q_k(x))$ forms an infinite connected subgraph in $\mathbb{Z}^2\times \{0,\dots,k-1\}^{d-2}$. Claim~\eqref{eq:subset-C-infty} follows by the uniqueness of the infinite open cluster of Bernoulli bond percolation when $\omega \in \Omega_0$.
\end{proof}

\subsection{Renormalization II: Tree embeddings in the coarse-grained lattice}
We now turn to the second stage of the renormalization procedure, which proceeds in a similar manner as in the proof of $u_*<\infty$ in Section~\ref{sec:Subcritical-Phase}, but on $\bfF$ instead of $\bbZ^d$. 

We begin by introducing some further notation. 
Let $x\in \bfF$ and $L\geq 1$ be an integer. We denote by $\bfB(x,L)=\{y\in \bfZ^d:|x-y|_{\infty,\bfZ^d} \leq L\}$ the closed box of radius $L$ on the space $\bfZ^d$ with respect to $| \, \cdot |_{\infty,\bfZ^d}$. 
We denote by 
\begin{equation}
    \bm{\frP}(x,L) = \{y\in \bfF:|x-y|_{\infty,\bfZ^d} \leq L\} \text{ and } \bm{\frF}(x,L) =\{y\in \bfF:|x-y|_{\infty,\bfZ^d} = L\} 
\end{equation}
the \emph{plate} and the \emph{frame} centered at $x$ with radius $L$, which are subsets of $\bfF$. 
The corresponding sets in the original lattice $\bbZ^d$ are denoted by
\begin{equation}
    \begin{split}
    &\frP^{x,L}_k = \bigcup_{z\in \bm{\frP}(x,L)} Q_k(z), \quad \frF^{x,L}_k =  \bigcup_{z\in \bm{\frF}(x,L)} Q_k(z) \\
    &\text{(note that $\frF^{x,L}_k$ is made up of four ``tubes'' of length $(2L+1)k$ and thickness $k$).}
    \end{split}
\end{equation}

Recall $(S_x)_{x\in \bbZ^d}$ in \eqref{eq:S_x}. We define $M(L)= (\log L)^{2/\eta_{\mathrm{hk}}}$ as in~\eqref{eq:Z_x}. 

\begin{definition} \label{def:good-and-vacant}\
    \begin{equation}\label{eq:plate-good}
        \begin{minipage}{0.9\textwidth}
           For $x\in \bfF,L\geq 1$, we say that a plate $\bm{\frP}(x,L) \subseteq \bfF$ is $\omega$-good if  
           \[
           \left\{
           \begin{aligned}
            \, \text{(i)}  & \, \text{all vertices in $\bm{\frF}(x,L-1)$ are $\phi$-open},\\
            \text{(ii)} & \, \text{there exists some vertex $y$ in $\bm{\frF}(x,L-1)$ such that $y\in \cC_\infty^\bfF$},\\
            \text{(iii)} &\, \text{it satisfies $\sup \{S_y: y\in \frP_k^{x,L} \} \leq k M(L)$} 
           \end{aligned}\right.\] 
           (a plate that is not $\omega$-good is called \textit{$\omega$-bad}).
        \end{minipage}
    \end{equation}

    \begin{equation}\label{eq:u-vacant}
        \begin{minipage}{0.9\textwidth}
            We say a $\phi$-open vertex $x\in \bfZ^d$ is \emph{$u$-vacant} if $\cC^\vee(Q_k(x))\cap \cI^u_\omega=\varnothing$ and \emph{$u$-occupied} otherwise.
        \end{minipage}
    \end{equation}
\end{definition}
 Note that in general, if a vertex $x\in \bfZ^d$ (not necessarily $\phi$-open) fails to be $u$-vacant, it is either both $\phi$-open and $u$-occupied, or it is $\phi$-closed. \medskip

We aim to show that $\bbP^I_\omega$-a.s., the set of $u$-vacant vertices contains an infinite connected component in $\cC_\infty^\bfF$ for small $u>0$. To this end, a version of the renormalization scheme described in Section~\ref{sec:Quenched-Unif-Estimates} will be applied to the ``coarse-grained'' plane $\bfF$. Due to this preliminary coarse-graining, we identify the plane $\bfF$ with $\bbZ^2$ and denote the renormalized lattice at scale $n$ (recall $L_n=L_0\ell_0^n$) by
\begin{equation}
    \bfL_n= L_n \bfF ~(=kL_n \bbF) 
\end{equation}
in place of $\bbL_n$ in~\eqref{eq:L_n}.

\begin{lemma}\label{lem:verify-Z-sec5}
	Let $L_0 \geq 1$, $\ell_0 \geq 100$,  $k_{L_0} = \lceil (\log L_0)^2 \rceil \geq k_0(p,d)$ and for $x\in \bfL_0$ define 
    \begin{equation}
        \cZ_x =\IND \big\{\text{$\bm{\frP}(x,L_0)$ is $\omega$-bad} \big\}.
    \end{equation}

    Then $(\cZ_x)_{x\in \bfL_0}$ is $(k_{L_0} L_0,\ell_0)$-approximable with decay exponent $\eta_{\mathrm{hk}}$. Moreover, the approximating fields $(\mathcal{Z}_x^{(n)})_{x \in \bfL_0, n \geq 0}$ can be chosen so that $\bbQ_p[\mathcal{Z}_0^{(0)}=1] \leq \widehat{\vartheta}(L_0)$ for a sequence of positive real numbers $( \widehat{\vartheta}(L))_{L \geq 1}$ with $\lim_{L \to \infty} \widehat{\vartheta}(L) = 0$.
\end{lemma}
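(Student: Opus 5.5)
The plan is to build the approximating fields by truncating each of the three non-local ingredients in the definition of an $\omega$-bad plate at the spatial scale $L_n$. Condition (i) of~\eqref{eq:plate-good} is already local. Condition (iii) is handled exactly as in Lemma~\ref{lem:verify-Z-x}. Condition (ii) is the only genuinely new point: membership in the infinite cluster $\cC_\infty^\bfF$ gets replaced by a finite-range connection event. Throughout I work with the fine-lattice scales $L_n' = k_{L_0} L_n$, so that a coarse radius $L_n$ in $\bfF$ corresponds to radius $k_{L_0}L_n = L_n'$ in $\bbZ^d$.

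\emph{Definition of the approximating fields.} For $x\in\bfL_0$ and $n\geq 0$, let $\cZ_x^{(n)}$ be the indicator that at least one of the following holds:
\begin{itemize}
\item (i') some vertex of $\bm{\frF}(x,L_0-1)$ is $\phi$-closed;
\item (ii') no vertex of $\bm{\frF}(x,L_0-1)$ is joined by a $\phi$-open path in $\bm{\frP}(x,2L_n)$ to $\bm{\frF}(x,2L_n)$;
\item (iii') $\sup\{S^{(L_n')}_y : y\in\frP_k^{x,L_0}\} > kM(L_0)$, with $S^{(\cdot)}$ as in~\eqref{eq:SxL}.
\end{itemize}

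\emph{Locality and identical distribution.} Each $\phi(z)$ is $\cF_{E(Q_k(z)^+)}$-measurable, and $\cL(Q)$ is local by~\eqref{eq:LQ-measurable}. Hence $\cZ_x^{(n)}$ depends only on edges in a box of radius about $2kL_n+2k \leq 3L_n'$, which gives~\eqref{eq:local}. Identical distribution follows from the translation invariance of $\bbQ_p$.

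\emph{Stability and approximation events.} I take $E_{n,x}=F_{n,x}=A_{n,x}\cap G_{n,x}$, where
\begin{itemize}
\item $A_{n,x}=\bigcap_{y\in\frP_k^{x,L_0}}\{S_y\leq L_n'\}$;
\item $G_{n,x}$ is the event that every $\phi$-open path in $\bfF$ starting in $\bm{\frP}(x,L_0)$ and reaching $\bm{\frF}(x,2L_n)$ belongs to $\cC_\infty^\bfF$.
\end{itemize}
On $A_{n,x}$, (iii') at levels $n$ and $n+1$ both coincide with (iii), as in~\eqref{eq:S_x-stable}. On $G_{n,x}\cap\{\text{(i) holds}\}$ I check that (ii'), at either level $n$ or $n+1$, is equivalent to the failure of (ii):
\begin{itemize}
\item If a frame vertex lies in $\cC_\infty^\bfF$, it is connected within the plate to every larger frame, so (ii') fails at every level.
\item Conversely, if a frame vertex reaches $\bm{\frF}(x,2L_n)$, then $G_{n,x}$ places it in $\cC_\infty^\bfF$. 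Since a connection to $\bm{\frF}(x,2L_{n+1})$ implies one to $\bm{\frF}(x,2L_n)$, the same holds at level $n+1$.
\end{itemize}
When (i) fails, all versions are equal to $1$. This proves~\eqref{eq:stable} and~\eqref{eq:def-appro-2} simultaneously.

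\emph{Decay of the bad events.} For $A_{n,x}^c$, a union bound over the $(kL_0)^d$ points together with~\eqref{eq:S_x-decay} gives a bound $c\exp(-c(L_n')^{\eta_{\mathrm{hk}}})$. The key estimate is for $G_{n,x}^c$, which I expect to be the main obstacle. On $G^c_{n,x}$ there is a finite $\phi$-open cluster in $\bfF$ of coarse diameter at least $L_n$ near $x$. By planar duality this forces a $\phi$-closed $*$-circuit in $\bfF$ of length at least $cL_n$ surrounding it. I would bound this with a Peierls count, using that $\phi$ is $2$-dependent with $\bbQ_p[\phi=0]\leq c\,e^{-ck}$ by~\eqref{eq:RQ-decay}. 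Extracting a $3$-sparse subset of each circuit gives independent closed sites, so the probability is at most $\sum_{\ell\geq cL_n}\ell^{C}\,(Ce^{-ck/9})^{\ell}\leq e^{-cL_n}$ once $k\geq k_0$ is large. Since $k=\lceil(\log L_0)^2\rceil$, we have $e^{-cL_n}=e^{-cL_n'/k}\leq c\exp(-c'(L_n')^{\eta_{\mathrm{hk}}})$. This yields~\eqref{eq:E_n} and~\eqref{eq:def-appro-3} with decay exponent $\eta_{\mathrm{hk}}$. One point needs care here: circuits may pass at coarse distance only $O(L_0)$ from $x$, so the union over their starting positions must be kept polynomial in $L_n$.

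\emph{Smallness at the lowest level.} For $\widehat\vartheta(L_0)$, bound $\bbQ_p[\cZ_0^{(0)}=1]$ by the sum of three terms:
\begin{itemize}
\item $8L_0\, c\,e^{-c(\log L_0)^2}$ for (i');
\item the probability of a closed $*$-circuit of length at least $cL_0$ separating $\bm{\frF}(0,L_0-1)$ from $\bm{\frF}(0,2L_0)$, which is at most $e^{-cL_0}$, for (ii') given (i');
\item $(kL_0)^d\,c\exp(-c(kM(L_0))^{\eta_{\mathrm{hk}}})$ for (iii').
\end{itemize}
Each term tends to $0$ as $L_0\to\infty$ (recall $M(L_0)^{\eta_{\mathrm{hk}}}=(\log L_0)^2$).
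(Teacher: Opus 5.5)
Your proposal is correct and follows the paper's proof essentially step by step. It uses the same three truncated events (a closed frame vertex, no $\phi$-open crossing from $\bm{\frF}(x,L_0-1)$ to $\bm{\frF}(x,2L_n)$ inside $\bm{\frP}(x,2L_n)$, and $\sup_y S_y^{(kL_n)}>kM(L_0)$) and the same stability and approximation events, namely $\bigcap_y\{S_y\le kL_n\}$ together with ``long $\phi$-open crossings belong to $\cC_\infty^\bfF$''. It also uses the same Peierls bound via planar duality and finite-range dependence of $\phi$, and the same union bounds for $\widehat\vartheta(L_0)$. The only deviations are harmless: you bound the $n=0$ crossing term by a direct Peierls count rather than through stochastic domination by $q^*(k)$-Bernoulli site percolation, and your circuit estimate $e^{-cL_n}$ is weaker than the paper's $e^{-c'kL_n}$, though it still suffices after your conversion using $k\approx(\log L_0)^2$.
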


\begin{proof}
    We write $k=k_{L_0}$ for ease of notation. Recalling~\eqref{eq:SxL}, for $n\geq 0$, $x\in \bfL_0$, we define  
     \begin{equation}\label{eq:cZ_x^n}
        \left\{
        \begin{aligned}
        \, &\overline{A}_x = \{\text{there exists a $\phi$-closed vertex in $\bm{\frF}(x,L_0-1)$}\}, \\
        &\overline{B}_{n,x} = \Big\{\bm{\frF}(x,L_0-1) \overset{\phi}{\centernot\longleftrightarrow} \bm{\frF}(x,2L_n) \text{ in } \bfF \Big\}, \\
        &\overline{C}_{n,x} = \Big\{\sup_{y\in \frP_{k}^{x,L_0}} S_y^{(kL_n)}>k M(L_0)\Big\}, \\
        & \cZ_x^{(n)} = \IND_{\overline{A}_x\cup \overline{B}_{n,x}\cup \overline{C}_{n,x}}.
      \end{aligned}   \right. 
    \end{equation}
    We begin by verifying that $(\cZ_x^{(n)})_{x\in \bfL_0,n\geq 0}$ satisfies Assumption~\ref{ass:local-rv}. Due to the translation invariance of $\bbQ_p$, the random variables $(\mathcal{Z}_x^{(n)})_{x \in \bfL_0}$, $n \geq 0$, are identically distributed.
    Note that 
    \begin{equation}
       \{\bm{\frF}(x,L_0-1) \overset{\phi}{\longleftrightarrow} \bm{\frF}(x,2L_n) \text{ in } \bfF\} = \{\bm{\frF}(x,L_0-1) \overset{\phi}{\longleftrightarrow} \bm{\frF}(x,2L_n) \text{ in } \bm{\frP}(x,2L_n)\} ,
        \end{equation}
    so that $\overline{B}_{n,x}$ is $\cF_{E(B(x,3kL_n))}$-measurable. The $\cF_{E(B(x,3kL_n))}$-measurability of $\overline{C}_{n,x}$ follows similarly as in the proof of Lemma~\ref{lem:verify-Z-x} by \eqref{eq:LQ-measurable}. It remains to verify~\eqref{eq:stable}. For $x\in \bfL_0$, let 
\begin{equation}
    \left\{
    \begin{aligned}
        \, & E_{n,x}^{(1)} = \bigcap_{y\in \frP_k^{x,L_0}} \{S_y \leq kL_n\},\\
        & E_{n,x}^{(2)} =\left\{
        \begin{aligned}
           &  \text{if a $\phi$-open cluster in $\bfF$ intersects
         both $\bm{\frF}(x,L_0-1)$}\\
         &\text{and $\bm{\frF}(x,2L_n)$, then this cluster must be infinite}
        \end{aligned}\right\},
        \\
        &E_{n,x}^{\cZ}=E_{n,x}^{(1)} \cap E_{n,x}^{(2)}.
    \end{aligned}\right.
\end{equation}
    Note that $\overline{A}_x$ does not depend on $n$. It is also clear that $\mathbbm{1}_{\overline{B}_{n,x}}=\mathbbm{1}_{\overline{B}_{n+1,x}}$ under $E_{n,x}^{(2)}$. In addition, similar to the reasoning for~\eqref{eq:S_x-stable}, 
    \begin{equation}\label{eq:S_x-stable-2}
       S_y^{(kL_n)}\IND_{E_{n,x}^{(1)}} = S_y^{(kL_{n+1})}\IND_{E_{n,x}^{(1)}} = S_y \IND_{E_{n,x}^{(1)}},
    \end{equation}
    for all $y\in \frP_k^{x,L_0}$ and hence $\mathbbm{1}_{\overline{C}_{n,x}}= \mathbbm{1}_{\overline{C}_{n+1,x}}$ under $E_{n,x}^{(1)}$.  Therefore, 
    \begin{equation}
        \cZ_x^{(n)} \IND_{E_{n,x}^{\cZ}} = \cZ_x^{(n+1)} \IND_{E_{n,x}^{\cZ}}. 
    \end{equation}

    It remains to control the decay rate of $\bbQ_p[(E_{n,x}^\cZ)^c]$. By~\eqref{eq:S_x-decay}, 
    \begin{equation}\label{eq:E^1-decay}
        \bbQ_p[(E_{n,x}^{(1)})^c] \leq c k^{d}L_0^2 \exp(-c'(kL_n)^{\eta_{\mathrm{hk}}}) \leq c \exp(-c'(kL_n)^{\eta_{\mathrm{hk}}}). 
    \end{equation}
    We turn to $\bbQ_p[(E_{n,x}^{(2)})^c]$. If $(E_{n,x}^{(2)})^c$ occurs, then there exists a finite $\phi$-open cluster with diameter larger than $L_n$. By planar duality, this shows the existence of a $*$-connected $\phi$-closed circuit with diameter larger than $L_n$. We can then pick $cL_n$ vertices in the circuit with mutual $|\,\cdot\,|_{1,\bfZ^d}$-distance greater than $2$ (so that they are independent). Note that by \eqref{eq:RQ-decay}, $\bbQ_p[\phi_z=0]\leq c_{17}\exp({-c_{18}k})$. Therefore,  
    \begin{equation}\label{eq:E^2-decay}
        \bbQ_p[(E_{n,x}^{(2)})^c] \leq 8^{L_n} (c_{17}\exp({-c_{18}k}))^{cL_n} \leq c\exp(-c'kL_n).
    \end{equation}

    Combining~\eqref{eq:E^1-decay} and \eqref{eq:E^2-decay}, we obtain
    \begin{equation}\label{eq:E^cZ-dacay}
        \bbQ_p[(E^\cZ_{n,x})^c] \leq c\exp(-c'(kL_n)^{\eta_{\mathrm{hk}}}).
    \end{equation} 
    which completes the proof that $(\cZ_{x}^{(n)})$ satisfies Assumption \ref{ass:local-rv}. 

    We are left to compare $\cZ_x^{(n)}$ with $\cZ_x$ and show~\eqref{eq:def-appro-2} and~\eqref{eq:def-appro-3}. Note that under $E_{n,x}^\cZ$, by \eqref{eq:S_x-stable-2}, we have $S_y^{(kL_n)}=S_y$ and by the uniqueness of $\cC_\infty^\bfF$ (see Lemma~\ref{lem:percolation-plane}), 
    \begin{equation}
        \bm{\frF}(x,L_0-1) \overset{\phi}{\longleftrightarrow} \bm{\frF}(x,2L_n) \iff \bm{\frF}(x,L_0-1) \cap \cC_\infty^\bfF \neq \varnothing.
    \end{equation}
    Hence 
    \begin{equation}
        \cZ_x \IND_{E_{n,x}^\cZ} = \cZ_x^{(n)} \IND_{E_{n,x}^\cZ} . 
    \end{equation}
    Together with \eqref{eq:E^cZ-dacay}, we have verified that $(\cZ_x)_{x\in \bbL_0}$ satisfies Definition \ref{def:approximable} (taking $F_{n,x}^{\cZ}=E_{n,x}^{\cZ}$ in the definition). 

    It remains to prove that the probability of $\{\mathcal{Z}_0^{(0)}=1\}$ can be bounded from above by a quantity that goes to zero as $L_0\to\infty$. We have that (note that $k=k_{L_0} \geq (\log L_0)^2$) 
    \begin{equation}
        \bbQ_p[\overline{A}_0] \leq cL_0 \exp(-ck) \to 0, \quad \text{as $L_0\to\infty$}. 
    \end{equation}
    Next, we note that $\overline{B}_{0,x}$ implies that $\bm{\frF}(x,L_0-1)$ and $\bm{\frF}(x,2L_0)$ are not connected by an open path in the dominated Bernoulli site percolation with parameter $q^*(k)$. As we are in the supercritical regime of the latter when $L_0$ is large enough (since $q^*(k)\to 1$ as $L_0\to\infty$), we find
    \begin{equation}
        \bbQ_p[\overline{B}_{0,0}] \to 0, \quad \text{as $L_0\to\infty$}. 
    \end{equation}
    Finally by \eqref{eq:S_x-decay}
    \begin{equation}
        \bbQ_p[\overline{C}_{0,0}] \leq c k^{d-2} L_0^2 \exp(-c(kM(L_0))^{\eta_{\mathrm{hk}}}) \to 0, \quad \text{as $L_0\to\infty$}. 
    \end{equation}
    It follows by the definition in \eqref{eq:cZ_x^n} that $\bbQ_p[\mathcal{Z}_0^{(0)}=1] \leq \widehat{\vartheta}(L_0)$ with $\widehat{\vartheta}(L) \to 0$ as $L\to \infty$. 
\end{proof}

Recall that we identify $\bfF$ with $\bbZ^2$ and write $\bfL_n$ in place of $\bbL_n$. We also denote by $\Lambda_{n,x}^\bfF$ (instead of $\Lambda_{n,x}$) the set of all proper tree embeddings of $T_n$ rooted at $x\in \bfL_n$ into $\bfF$ (more precisely, $\Lambda_{n,x}^\bfF$ contains the proper tree embeddings defined as in Definition~\ref{def:Tree-embeddings}, but with $\bbL_n$ replaced by $\bfL_n$, and $|\, \cdot \,|_\infty$ in~\eqref{eq:T-m-1-2} replaced by $|\, \cdot \,|_{\infty,\bfF}$).  
We can apply Theorem~\ref{thm:general-proportion} with $(\zeta_x)_{x\in \bbL_0}=(\cZ_x)_{x\in \bfL_0}$ (as $\bbL_0$ corresponds to $\bfL_0$), $\rho =\frac{1}{2}$ and $D_n= \{x\in \bfL_n:|x|_{\infty,\bfZ^d}\leq 2L_{n+1}\}$ (note that $|D_n|\leq (4\ell_0+1)^2$ satisfies~\eqref{eq:window-growth}), and obtain the following statement, in the same way as in the proof of Corollary~\ref{coro:subcritical}. 

\begin{corollary}\label{coro:proportion-F}
    Let $\ell_0 \geq 3^{1/\eta_{\mathrm{hk}}}\vee 100$ be a fixed integer. There exists $L_*(\ell_0)<\infty$ such that for $L_0\geq L_*(\ell_0)$ and a set $\widetilde{\Omega}_2\subseteq \Omega_2$ with $\bbQ_p[\widetilde{\Omega}_2]=1$, for $\omega\in \widetilde{\Omega}_2$ and $x\in \bfL_n$ with $|x|_{\infty, \bfZ^d} \leq 2L_{n+1}$, for $n\geq n_0(\omega,\ell_0,L_0)$ large enough and any choice of $\cT\in \Lambda_{n,x}^\bfF$, 
    \begin{equation}
        \sum_{m\in T_{(n)}} \cZ_{\cT(m)} \leq 2^{n-1}. 
    \end{equation}
\end{corollary}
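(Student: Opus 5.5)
The plan is to deduce the statement from Theorem~\ref{thm:general-proportion} applied in the plane $\bfF$ (identified with $\bbZ^2$). We take $\zeta_x=\cZ_x$ for $x\in\bfL_0$, $\rho=\frac12$, and $D_n=\{x\in\bfL_n:|x|_{\infty,\bfZ^d}\le 2L_{n+1}\}$. This mirrors the proof of Corollary~\ref{coro:subcritical}, with one extra layer coming from the coupling between $k$ and $L_0$.

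\textbf{Step 1 (checking the hypotheses of the theorem).} Lemma~\ref{lem:verify-Z-sec5} shows that $(\cZ_x)$ is $(k_{L_0}L_0,\ell_0)$-approximable with decay exponent $\beta=\eta_{\mathrm{hk}}$, and that $\bbQ_p[\cZ^{(0)}_0=1]\le\widehat\vartheta(L_0)\to0$. The base scale entering the multiscale structure is therefore $k_{L_0}L_0$ in $\bbZ^d$-units. In plane units it is $L_0$, while the error terms $\varepsilon(\cdot)$ and $\varepsilon'(\cdot)$ decay like $\exp(-c(kL_n)^{\eta_{\mathrm{hk}}})$, which is at least as good as $\exp(-cL_n^{\eta_{\mathrm{hk}}})$. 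So Assumption~\ref{ass:local-rv} and Definition~\ref{def:approximable} hold in the plane version with parameters $L_0,\ell_0$.

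One must also check that the tree-embedding lemmas transfer to $d=2$ on $\bfF$:
\begin{itemize}
\item Lemma~\ref{lem:tree-geometry} holds since $d\ge2$ was allowed throughout Section~\ref{sec:Quenched-Unif-Estimates}.
\item The count $|\Lambda^\bfF_{n,x}|=\cC_{2,\ell_0}^{2^n-1}$ holds.
\item Independence across branches holds: the locality radius $3L_{n-1}$ in plane units (i.e.\ $3kL_{n-1}$ in $\bbZ^d$) is smaller than half of the separation $90L_{n-1}$.
\end{itemize}
With these in place, Lemmas~\ref{lem:multi-itera} and~\ref{lem:approximable} carry over verbatim. The constant $\cC_{d,\ell_0}$ is replaced throughout by $\cC_{2,\ell_0}$.

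\textbf{Step 2 (choice of $L_*(\ell_0)$).} Set $t=6\log\cC_{2,\ell_0}$. Both conditions in~\eqref{eq:Two-conditions-on-L0} hold for all large $L_0$:
\begin{itemize}
\item The first holds because $\widehat\vartheta(L_0)+\Delta(L_0)\to0$ as $L_0\to\infty$, by Lemma~\ref{lem:verify-Z-sec5} and~\eqref{eq:Delta-L0}.
\item The second holds because $cL_0^{\eta_{\mathrm{hk}}}\to\infty$ while $t$ is fixed once $\ell_0$ is.
\end{itemize}
Define $L_*(\ell_0)$ as the threshold beyond which both conditions hold. Since the constants $c_{11},\dots,c_{16}$ do not depend on $L_0$ (the bounds in Lemma~\ref{lem:verify-Z-sec5} are uniform in $k\ge k_0$), this threshold is well defined.

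Note that $|D_n|\le(4\ell_0+1)^2$ is bounded, so~\eqref{eq:window-growth} is immediate. Theorem~\ref{thm:general-proportion} then yields a $\bbQ_p$-full set on which $\sup_{x\in D_n}\sup_{\cT\in\Lambda^\bfF_{n,x}}2^{-n}\sum_{m\in T_{(n)}}\cZ_{\cT(m)}<\frac12$ for all but finitely many $n$. This gives the bound $\le2^{n-1}$. Intersecting this set with $\Omega_2$ defines $\widetilde\Omega_2$, and $n_0(\omega,\ell_0,L_0)$ is the last exceptional index.

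\textbf{Main obstacle.} The delicate point is uniformity in $L_0$, because $k=k_{L_0}$ grows with $L_0$. Two things must be verified. First, the decay constants in Assumption~\ref{ass:local-rv} and Definition~\ref{def:approximable} must be genuinely independent of $L_0$. Second, the $L_0$-dependent prefactors must be absorbed: $ck^dL_0^2$ in~\eqref{eq:E^1-decay} and $8^{L_n}$ in~\eqref{eq:E^2-decay}. Without this, the condition defining $L_*(\ell_0)$ could fail to stabilise.

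I would handle this by noting that these prefactors are polynomial in $L_0$ (resp.\ $\exp(CL_n)$ against $\exp(-c'kL_n)$ with $k\ge(\log L_0)^2$). Hence for $L_0$ large they are dominated by the stretched-exponential terms, at the cost of halving $c'$.
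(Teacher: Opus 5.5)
Your proposal is correct and follows the paper's route: the paper also applies Theorem~\ref{thm:general-proportion} on $\bfF\cong\bbZ^2$ with $\zeta=\cZ$, $\rho=\frac12$ and $D_n=\{x\in\bfL_n:|x|_{\infty,\bfZ^d}\le 2L_{n+1}\}$, arguing exactly as in Corollary~\ref{coro:subcritical}. You also check that the constants from Lemma~\ref{lem:verify-Z-sec5} are uniform in $L_0$ even though $k=k_{L_0}$ (absorbing the polynomial prefactors, and using $k\ge(\log L_0)^2$ in the planar-duality bound); the paper leaves this implicit, and your treatment of it is correct.
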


We take the following bound from \cite[Remark~2.2]{prevost2025}, which will be used in the proof: 
    \begin{equation}\label{eq:capacity-thickened-frame}
        \capa\big(\frF_k^{0,L_0}\big)
        \leq
        \left\{
        \begin{aligned}
             &\frac{c_{19}kL_0}{\log L_0}, & d=3,\\
            &c_{19}k^{d-2}L_0, & d\geq 4.
        \end{aligned}\right.
    \end{equation}

\begin{proposition}\label{prop:infinite-in-renormalized}
    For $\omega \in \widetilde{\Omega}_2$, there exists $u_0(p, d)>0$ such that for $0<u<u_0(p, d)$, 
    \begin{equation}\label{eq:infinite-cluster-1}
        \bbP^I_\omega \Big[\text{$u$-vacant vertices form an infinite connected component in $\cC_\infty^\bfF$}\Big] =1 . 
    \end{equation}
\end{proposition}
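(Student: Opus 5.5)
We argue by a Peierls-type duality in the coarse-grained plane $\bfF$ (identified with $\bbZ^2$), combined with the tree-embedding bound of Corollary~\ref{coro:proportion-F}. Fix $\ell_0 \geq 3^{1/\eta_{\mathrm{hk}}}\vee 100$, then $L_0 \geq L_*(\ell_0)$ and $k=k_{L_0}$ as in Lemma~\ref{lem:verify-Z-sec5}; all three are deterministic. It suffices to show that for $\omega\in\widetilde\Omega_2$ and $u$ small, with positive $\bbP^I_\omega$-probability the $u$-vacant vertices contain an infinite connected subset of $\cC_\infty^\bfF$. The event in \eqref{eq:infinite-cluster-1} is decreasing in the interlacement configuration and invariant under modifications in finite sets, so we upgrade positive probability to probability one via the zero-one law of~\cite{collin2026zeroone}. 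Concretely, one repeats that argument for the tail event on the coarse-grained level, using that $\cC_\infty^\bfF$ is a fixed infinite set for fixed $\omega$. Alternatively, we can show that the probability that the origin's neighbourhood is cut off at scale $L_n$ tends to $0$, exactly as in \eqref{eq:phase-trans-3}.

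\textbf{Duality step.} Suppose no infinite cluster of $u$-vacant vertices in $\cC_\infty^\bfF$ intersects the plate $\bm{\frP}(0,L_{n})$. Since $\cC_\infty^\bfF$ is infinite, there is a $*$-connected circuit in $\bfF$ separating $\bm{\frP}(0,L_n)$ from infinity whose vertices are all either $\phi$-closed or $u$-occupied. We localize this circuit in the usual way: for large $n$, with a summable union bound over $n$, it crosses some annulus $\bm{\frF}(x,L_n-1)$ to $\bm{\frF}(x,2L_n)$ with $x\in D_n$. By Lemma~\ref{lem:renormalization}, applied in $\bfF$, there is then $\cT\in\Lambda^\bfF_{n,x}$ whose leaf frames $\bm{\frF}(\cT(m),L_0-1)$ are all hit by the circuit.

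\textbf{Using good leaves.} By Corollary~\ref{coro:proportion-F}, at least $2^{n-1}$ leaves $m$ have $\omega$-good plates. A good plate has every vertex of $\bm{\frF}(\cT(m),L_0-1)$ $\phi$-open. Moreover, by (ii) and the connectivity of the $\phi$-open frame, the whole frame lies in $\cC_\infty^\bfF$. Hence the circuit meets this frame at a $u$-occupied vertex, so $\cI^u_\omega$ intersects $K_m:=\frF^{\cT(m),L_0-1}_k\cap\cC_\infty$. By \eqref{eq:subset-C-infty}, $K_m$ contains $\cC^\vee(Q_k(z))\subseteq \cC_\infty$ for some $z$ in the frame. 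Thus the circuit event forces $\cI^u_\omega\cap K_m\neq\varnothing$ for all good leaves $m$.

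\textbf{Bounding the probability.} Using the Poisson construction of Lemma~\ref{lem:constructive-RI} for $K=\bigcup_m K_m$, the number of trajectories entering $K$ is Poisson with mean $u\,\capa^\omega(K)\le u\sum_m \capa(\frF_k^{\cT(m),L_0})$ by Lemma~\ref{lem:monotone-capacity}. Hitting $2^{n-1}$ well-separated sets requires either many trajectories or a single trajectory travelling between distant leaves. We would control this by the standard bound
\[
\bbP^I_\omega\Big[\textstyle\bigcap_m\{\cI^u\cap K_m\neq\varnothing\}\Big]\le \big(C u\, \capa(\frF^{0,L_0}_k)\big)^{c2^n},
\]
valid as in \cite[Section~3]{rath2015transition}. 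Its proof uses hitting estimates between leaves: \eqref{eq:off-diag-Green} holds because $S_y\le kM(L_0)\ll L_0 k$ on good plates, together with the last-exit decomposition \eqref{eq:FED} and the tree separation of Lemma~\ref{lem:tree-geometry}. This gives that a walk started in $K_m$ hits the other $K_{m'}$ with total probability bounded by $\sum_j 2^{j}\capa(\frF_k^{0,L_0})(kL_{j-1})^{2-d}\le 1/2$ for $\ell_0$ large. Multiplying by the number $\cC_{d,\ell_0}^{2^n}$ of embeddings, the number of leaf-hit locations, and $|D_n|$, the bound is summable once $u<u_0$, with $u_0$ determined by $\capa(\frF_k^{0,L_0})$ via \eqref{eq:capacity-thickened-frame}.

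\textbf{Main obstacle.} The hardest step is the quenched decoupling across trajectories that visit several leaves. Green function bounds \eqref{eq:off-diag-Green} only apply near good plates, and walks may pass through bad regions between them. We would first restrict attention to the good leaves and use the requirement $|y-z|\ge c_8(S_y\wedge S_z)$, which needs regularity at only one endpoint, so that the bound applies whenever one of the two points is good.
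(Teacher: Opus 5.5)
Your architecture matches the paper's. You use planar duality in $\bfF$ and localize the blocking $*$-circuit over scales $n\ge j$ and centres $x\in D_n$. You then apply Lemma~\ref{lem:renormalization} and Corollary~\ref{coro:proportion-F}, and observe that a good leaf frame lies entirely in $\cC_\infty^\bfF$, so the circuit meets it at a $u$-occupied vertex. Finally you use the Poisson construction with hitting estimates from \eqref{eq:FED} and \eqref{eq:off-diag-Green}. Two side points: the zero-one law is not needed, since the blocking probability tends to $0$. Your ``main obstacle'' is also not one: \eqref{eq:FED} gives $P^\omega_y[H_{K_{m'}}<\infty]\le \max_{z\in K_{m'}} g^\omega(y,z)\,\capa^\omega(K_{m'})$, a pointwise Green function bound between two good plates, regardless of where the walk travels.

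The genuine gap is in the quantitative core. Your ``standard bound'' $\bigl(Cu\,\capa(\frF_k^{0,L_0})\bigr)^{c2^n}$ is false. With probability $u\capa^\omega(K)e^{-u\capa^\omega(K)}$ there is exactly one trajectory, and it visits all good frames with a positive probability that does not depend on $u$. Hence the left side is at least linear in $u$ as $u\to0$. Small $u$ only suppresses the many-trajectories scenario. The single-trajectory scenario has to be paid for by the inter-frame hitting probability $\theta$.

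The paper handles this as follows:
\begin{itemize}
\item It dominates the number $\cN(X)$ of good frames visited by one walk by a geometric variable with parameter $\theta$, using the strong Markov property.
\item It computes the compound-Poisson generating function.
\item It applies the exponential Chebyshev inequality at $z=1/(2\theta)$.
\end{itemize}
Per embedding this gives a bound of order $(2\theta)^{2^{n-1}}\exp\bigl(2^n u\,\capa(\frF_k^{0,L_0})/\theta\bigr)$.

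Against the entropy factor $\cC_{2,\ell_0}^{2^n}$ (in $\bfF$ the constant is $\cC_{2,\ell_0}$, not $\cC_{d,\ell_0}$), you therefore need $\theta$ small compared to $\cC_{2,\ell_0}^{-2}$; $\theta\le 1/2$ is not enough. This smallness must come from taking $L_0$ large, not $\ell_0$ large: the $j=1$ term $2\capa(\frF_k^{0,L_0})(kL_0)^{2-d}$ of your sum does not depend on $\ell_0$. Only after that is $u_0$ chosen, of order $\theta/\capa(\frF_k^{0,L_0})$.

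In $d=3$ this works only because the targets are entire thickened frames. Their capacity is at most of order $kL_0/\log L_0$ by \eqref{eq:capacity-thickened-frame}, which is what gives $\theta\lesssim 1/\log L_0$. For the same reason, your extra factor for ``leaf-hit locations'' (of order $(cL_0)^{2^n}$) cannot be absorbed by $\theta^{2^{n-1}}$ in $d=3$. It must be dropped, which is legitimate because $K_m$ is already the whole frame.
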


\begin{proof}
    Throughout the proof, we fix 
    \begin{equation}\label{eq:l0L0k}
        \begin{split}
            &\ell_0 \geq 3^{1/\eta_{\mathrm{hk}}} \vee 100, \quad \\
            &L_0 \geq L_*(\ell_0) \vee \min \{L \geq 1: c_8 M(\tilde{L}) \leq \tilde{L},\text{ for all } \tilde{L}\geq L\} \\
            & \qquad \vee \exp((2c_{19}\cdot c_{10}) \vee \sqrt{k_0(p,d)}), \text{(with $k_0(p,d)$ in~\eqref{eq:k0pd}, $L_*(\ell_0)$ in Corollary~\ref{coro:proportion-F})} \\
            & k=k_{L_0} = \lceil (\log L_0)^2 \rceil  \ (\geq k_0(p,d)). 
        \end{split}
    \end{equation}

    Suppose that the event under the probability in \eqref{eq:infinite-cluster-1} occurs, then the $u$-vacant vertices also form an infinite connected component in $\bfF$. We define for $j \geq 1$ 
    \begin{equation}
        A_j^u  = \Bigg\{ \begin{minipage}{0.7\textwidth} there exists an infinite nearest-neighbor path in $\bfF$ of $u$-vacant vertices starting from a vertex in $\bm{\frF}(0,L_j)$\end{minipage} \Bigg\}. 
    \end{equation}
    and note that it suffices to show that $\lim_{j\to\infty} \bbP^I_\omega[(A_j^u)^c]= 0$. If $(A_j^u)^c$ occurs, then by planar duality of $\bfF$, there exists a $*$-connected circuit surrounding $\bm{\frF}(0,L_j)$ consisting of vertices that are not $u$-vacant (recall~\eqref{eq:u-vacant} and the line below it). 
   We proceed as in the proof of~\cite[Corollary 5.2]{rath2015transition}, and define for $n\geq 1$ and $x\in \bfL_n$ the events
    \begin{equation}
        B_{n,x}^u=
  \Bigg\{
  \begin{minipage}{0.7\textwidth}
    there exists a $*$-connected path $\bm{\gamma}$ of vertices in $\bfF$, with each vertex in $\bm{\gamma}$ not $u$-vacant, that connects $\bm{\frF}(x, L_n-1)$ and $\bm{\frF}(x, 2L_n)$
  \end{minipage}
\Bigg\}.
    \end{equation}
    With this, we obtain
    \begin{equation}\label{eq:A_n-union-bound}
        \bbP^I_\omega [(A_j^u)^c] \leq \sum_{n=j}^\infty \sum_{x\in \bfL_n, |x|_{\infty,\bfZ^d} \leq 2L_{n+1}} \bbP^I_\omega[B_{n,x}^u]. 
    \end{equation}

    Our goal is to find a uniform upper bound for $\bbP^I_\omega[B_{n,x}^u]$ over $x \in \bfL_n$ with $|x|_{\infty, \bfZ^d}\leq 2L_{n+1}$. By Lemma~\ref{lem:renormalization}, under $B^u_{n,x}$, there exists $\cT\in {\Lambda}^{\bfF}_{n,x}$ such that for all $m \in T_{(n)}$, 
    \begin{equation}
         \bm{\gamma}\cap \bm{\frF}(\cT(m), L_0-1) \neq \varnothing. 
    \end{equation}
    By Corollary \ref{coro:proportion-F}, we know that for all $\cT\in {\Lambda}^{\bfF}_{n,x}$, at least $2^{n-1}$ of the plates $(\bm{\frP}(\mathcal{T}(m),L_0))_{m \in T_{(n)}}$, are $\omega$-good. We denote the index set of those plates by $\widehat{T}_{(n),\cT}$, which is a subset of $T_{(n)}$ (with $|\widehat{T}_{(n),\cT}| \geq 2^{n-1}$). For $m\in \widehat{T}_{(n),\cT}$, all vertices in $\bm{\frF}(\cT(m), L_0-1)$ are $\phi$-open (see~\eqref{eq:plate-good}~(i)) so that $\bm \gamma \cap \bm{\frF}(\cT(m), L_0-1)$ is a non-empty set of $u$-occupied vertices (recall the definition~\eqref{eq:u-vacant}). Using again~\eqref{eq:u-vacant}, we have 
    \begin{equation}
        \cI^u_\omega \cap \bigcup_{y\in \bm{\frF}(\cT(m), L_0-1)} \cC^\vee(Q_k(y)) \neq \varnothing, \quad \text{for all } m\in \widehat{T}_{(n),\cT}. 
    \end{equation}
    The above gives
    \begin{equation}
        \bbP^I_\omega[B_{n,x}^u] \leq \bbP^I_\omega\Bigg[\bigcup_{\cT\in \Lambda_{n,x}^{\bfF}} \bigcap_{m\in \widehat{T}_{(n),\cT}} \Big\{\cI^u_\omega \cap \bigcup_{y\in \bm{\frF}(\cT(m), L_0-1)} \cC^\vee(Q_k(y)) \neq \varnothing\Big\} \Bigg]. 
    \end{equation}
    
    For $\cT\in \Lambda_{n,x}^{\bfF}$, $m\in \widehat{T}_{(n),\cT}$, we define by 
    \begin{equation}\label{eq:porous-frame}
        \begin{split}
            &\widehat{\frF}_k(\cT(m)) = \bigcup_{y\in \bm{\frF}(\cT(m), L_0-1)} \cC^\vee(Q_k(y)) ~(\subseteq \frF_k^{\cT(m),L_0}), \quad \cX_\cT = \bigcup_{m\in\widehat{T}_{(n),\cT}} \widehat{\frF}_k(\cT(m)), \\
            & (\text{$\cX_\cT \subseteq \cC_\infty$ by \eqref{eq:plate-good}~(ii) and \eqref{eq:subset-C-infty}})
        \end{split}
    \end{equation} 
    the \emph{thickened porous frame} at $\mathcal{T}(m)$, and the union over all these frames, respectively.

    Recall the notation $N_{K}$ for $K\subseteq \cC_\infty$, $(X^j)_{j \geq 1}$, and $\mathsf{P}^\omega$ from the constructive definition of random interlacements in Lemma~\ref{lem:constructive-RI}. For a simple random walk $X$, we denote by 
    \begin{equation}
        \cN(X) = \sum_{m\in \widehat{T}_{(n),\cT}} \IND \big\{ \mathrm{range}( X) \cap \widehat{\cF}_k(\cT(m)) \neq \varnothing \big\} 
    \end{equation}
    the number of thickened porous frames visited by $X$. 
    Then 
    \begin{equation}
        \bbP^I_\omega \bigg[\bigcap_{m\in \widehat{T}_{(n),\cT}} \Big\{\cI^u_\omega \cap \widehat{\frF}_k(\cT(m))\neq \varnothing\Big\} \bigg] \leq \mathsf{P}^\omega \bigg[\sum_{j=1}^{N_{\cX_\cT}} \cN(X^j) \geq  2^{n-1} \bigg]. 
    \end{equation}
    Our goal is then to stochastically dominate $\cN(X)$ by a geometrically distributed random variable. Note that $\cZ_x=0$ implies that $S_z \leq kM(L_0)$ for all $z\in \frP_k^{x,L_0}$ by~\eqref{eq:plate-good}~(iii).
    For $m\in \widehat{T}_{(n),\cT}$ and $m' \in T_{(n)}^{m,j}$ for $1\leq j \leq n$, Lemma~\ref{lem:tree-geometry} yields that $|\cT(m)-\cT(m')|_\infty \geq 90kL_j$. Therefore by~\eqref{eq:off-diag-Green} and using~\eqref{eq:l0L0k}, one has
    \begin{equation}\label{eq:sec-5-Green}
    g^\omega(y,z) \leq \frac{c_{10}}{(kL_{j})^{d-2}}, \quad \text{for $y\in \frP_k^{\cT(m),L_0}$ and $z\in \frP_k^{\cT(m'),L_0}$}.  
    \end{equation}

    We then adapt the proof of Proposition~5.1 in \cite{rath2015transition} to our setup.
    For $y\in \widehat{\frF}_k(\cT(m))$, 
    \begin{equation}
         \begin{split}
        P_y^\omega [\mathrm{range}(X) &\cap \cX_\cT\setminus \widehat{\frF}_k(\cT(m)) \neq \varnothing] 
        \leq \sum_{j=1}^n \sum_{m' \in T_{(n)}^{m,j} \cap  \widehat{T}_{(n),\cT}} P_y^\omega [\mathrm{range}(X) \cap \widehat{\frF}_k(\cT(m')) \neq \varnothing]\\
        &\underset{\eqref{eq:sec-5-Green}}{\overset{\eqref{eq:FED}}{\leq} }\sum_{j=1}^n \sum_{m' \in T_{(n)}^{m,j} \cap  \widehat{T}_{(n),\cT}} c_{10} (kL_{j})^{2-d} \capa^\omega(\widehat{\frF}_k(\cT(m')) \\
        &\underset{\eqref{eq:capacity-monotone}}{\overset{\eqref{eq:porous-frame}}{\leq}} \sum_{j=1}^n \sum_{m' \in T_{(n)}^{m,j} \cap  \widehat{T}_{(n),\cT}} c_{10} (kL_j)^{2-d} \capa(\frF_k^{0,L_0}) \\
        &\underset{\eqref{eq:sub-division-tree}}{\overset{\ell_0\geq 100}{\leq}} c_{10} (kL_0)^{2-d}  \capa(\frF_k^{0,L_0})  \overset{\eqref{eq:capacity-thickened-frame}}{\leq }
        \left\{
        \begin{aligned}
            \dfrac{c_{19}\cdot c_{10}}{\log L_0}, \quad &d=3,\\
            \dfrac{c_{19}\cdot c_{10}}{L_0^{d-3}},\quad & d\geq 4.
        \end{aligned}
        \right. 
    \end{split}
    \end{equation}
We define
    \begin{equation}
   \theta_d(L_0) \overset{\mathrm{def}.}{=} \left\{
        \begin{aligned}
            \dfrac{c_{19}\cdot c_{10}}{\log L_0}, \quad &d=3,\\
            \dfrac{c_{19}\cdot c_{10}}{L_0^{d-3}},\quad & d\geq 4.
        \end{aligned}
        \right.
    \end{equation}
    and note that $\theta_d(L_0) < \frac{1}{2} \text{ by~\eqref{eq:l0L0k}}$.
    Abbreviating $\theta_d(L_0)$ by $\theta$, we then use the strong Markov property of the random walk to conclude that $\cN(X)$ is dominated by a geometric random variable with parameter $\theta$, so $E^\omega_{\widetilde{e}_{\cX_\cT}}[z^{\cN(X)}]\leq \frac{(1-\theta)z}{1-\theta z}$ for any $1\leq z<\frac{1}{\theta}$. Since $N_{\cX_\cT}$ follows a Poisson distribution with parameter $u\capa^\omega(\cX_\cT)$, we see that 
    \begin{equation}\label{eq:exponential-moment}
        \begin{split}
            \mathsf{E}^\omega &\big[ z^{\sum_{j=1}^{N_{\cX_\cT}} \cN(X^j)} \big] \\
            &= \exp\Big(u\capa^\omega(\cX_\cT)\cdot \big(E^\omega_{\widetilde{e}_K}[z^{\cN(X)}]-1 \big)  \Big) 
        \leq \exp\Big(u\capa^\omega(\cX_\cT)\cdot \frac{z-1}{1-\theta z}  \Big).
        \end{split}
    \end{equation}
    We then apply the exponential Chebyshev inequality with $z=\frac{1}{2\theta}$ ($>1$): 
    \begin{equation}
    \begin{split}
        \bbP^I_\omega &[B_{n,x}^u] \leq (\cC_{2,\ell_0})^{2^n} \mathsf{E}^\omega \bigg[\Big(\frac{1}{2\theta}\Big)^{
        \sum_{j=1}^{N_{\cX_\cT}} \cN(X^j)}\bigg] (2\theta)^{2^n}  \\
        & 
        \overset{\eqref{eq:exponential-moment}}{\leq}
         \exp\Big(u\capa^\omega(\cX_\cT)\cdot\frac{1}{\theta} \Big) \cdot (2\theta \cC_{2,\ell_0})^{2^n}
        \underset{\eqref{eq:capacity-monotone}}{\overset{\eqref{eq:porous-frame}}{\leq}} \exp(u\cdot \capa(\frF_k^{0,L_0}) \cdot \cC_{2,\ell_0})^{2^n} \\
       & \leq \exp(u\cdot c_{19} k^{d-2} L_0 \cdot \cC_{2,\ell_0})^{2^n}. 
           \end{split} 
    \end{equation}
    Recall that $k=k_{L_0}$. If $u_0=(2c_{19} k^{d-2} L_0 \cdot \cC_{2,\ell_0})^{-1}$ and $0<u<u_0$, then by~\eqref{eq:A_n-union-bound} $\lim_{j\to\infty}\bbP_\omega^I[(A_j^u)^c]=0$, which completes the proof. 
\end{proof}

With Proposition~\ref{prop:infinite-in-renormalized} in place, we now show that the vacant set percolates on $\mathcal{C}_\infty$ due to the good connectivity property of $u$-vacant boxes. 

\begin{proof}[\textbf{Proof of Theorem~\ref{thm:PH-RI}, Part II}]
    We have shown in the first part of the proof that $u_*^\omega$ equals a deterministic constant $u_*$ with $u_*<\infty$, see \eqref{eq:u-star-finite}. \medskip
    
    We now show that $u_*>0$. 
    Take $u_0(p,d)$ to be the quantity in Proposition~\ref{prop:infinite-in-renormalized} and $\widetilde{\Omega}_2 \subseteq \Omega_0$ such that Proposition~\ref{prop:infinite-in-renormalized} holds. Let $0<u<u_0(p,d)$. For $\omega\in \widetilde{\Omega}_2$ and $\eta$ an element belonging to the event under the probability in \eqref{eq:infinite-cluster-1}, let $\bm{\cD}^u(\omega, \eta) \subseteq \bfF$ be the infinite connected component in $\cC_\infty^\bfF$ of $u$-vacant vertices. We claim that 
    \begin{equation}\label{eq:claim-infinite}
        \text{$\bigcup_{x\in \bm{\cD}^u} \cC^\vee(Q_k(x))$ is an infinite connected subset of $\cV^u_\omega$}.
    \end{equation}
    We first see from~\eqref{eq:subset-C-infty} that $\bigcup_{x\in \bm{\cD}^u} \cC^\vee(Q_k(x)) \subseteq \cC_\infty$, and by~\eqref{eq:C^V-connected} we obtain the connectedness by open paths. The latter set contains an unbounded path since $\bm{\cD}^u$ is infinite. Finally, by the definition of $u$-vacancy in~\eqref{eq:u-vacant}, $\cC^\vee(Q_k(x)) \subseteq \cV^u_\omega$ for $x\in \bm{\cD}^u$. The claim~\eqref{eq:claim-infinite} therefore holds. This shows that for $\bbP^I_\omega$-a.e.~$\eta$, $\cV^u_\omega$ percolates in a slab and thus $u_*>0$.  
\end{proof}

We now briefly comment on further applications and potential extensions of our main results.

\begin{remark}
\label{rem:Final}
    \begin{enumerate}
        \item The method introduced in Theorem~\ref{thm:general-proportion} should be pertinent for establishing several variants of the main results, Theorem~\ref{thm:PH-GFF} and Theorem~\ref{thm:PH-RI}, as we now explain. For instance, one might replace $\Omega_{\mathrm{bond}} = \{0,1\}^{\bbE_d}$ by a set $\Omega_{\mathrm{bond}}^{\lambda_{\mathrm{min}}} = (\{0\} \cup [\lambda_{\mathrm{min}},1])^{\bbE_d}$ for some $\lambda_{\mathrm{min}} \in (0,1)$ and consider a probability measure $\widetilde{\mathbb{Q}}$ on $\Omega_{\mathrm{bond}}^{\lambda_{\mathrm{min}}}$  under which the canonical coordinates $(\widetilde{\omega}_e)_{e \in \bbE_d}$ are i.i.d.~and fulfill
        \begin{equation}
        \widetilde{\mathbb{Q}}[\widetilde{\omega}_e > 0] \in (p_c(d),1)
        \end{equation}
(this corresponds for instance to the assumptions in~\cite[Theorem 1.9]{schweiger2024maximum}, but for $d \geq 3$ in our case). The corresponding weighted graph $(\mathcal{C}_\infty, \bbE_{\mathcal{C}_\infty},\omega_e)$ (with $\mathcal{C}_\infty$ the infinite connected component of edges with positive conductance) can then similarly be equipped with measures $\mathbb{P}^G_{\widetilde{\omega}}$ or $\mathbb{P}^I_{\widetilde{\omega}}$. Since the stochastic integrability~\eqref{eq:S_x-decay} and the heat kernel bounds~\eqref{eq:Barlow-HK} remain valid in this set-up (see also Remark 4.14 (2) in~\cite{chiarini2026solidification} for a similar discussion), our main results Theorem~\ref{thm:PH-GFF} and Theorem~\ref{thm:PH-RI} provide non-trivial percolation phase transitions for $E^{\geq \alpha}_{\widetilde{\omega}}$ and $\mathcal{V}^u_{\widetilde{\omega}}$, for $\widetilde{\mathbb{Q}}$-a.e.~$\widetilde{\omega} \in \Omega_{\mathrm{bond}}^{\lambda_{\mathrm{min}}}$. In a different direction, it further seems plausible that Theorem~\ref{thm:general-proportion} should be helpful in proving phase transitions for other percolation models with long-range correlations on $\mathcal{C}_\infty$, upon combination with an appropriately chosen static renormalization scheme. 
\item For the level sets of the GFF and the vacant set of random interlacements on $\mathbb{Z}^d$, $d \geq 3$, a very precise understanding has emerged for the nature of the phase transition (see~\cite{drewitz2018sign,hugo2023equality,
duminilcopin2023phasetransitionvacantset,
goswami2025stronglocaluniquenessvacant}) and the off-critical behavior of both models (see~\cite{chiarini2020GFF,
chiarini2020entropic,goswami2022radius,
li2014,
mcauley2025limit,
nitzschner2017solidification,
sznitman2015disconnection,
sznitman2017, sznitman2019macroscopic}). In particular, we highlight the \textit{sharpness} of the phase transition for both models (see~\cite{hugo2023equality} for the level-sets of the GFF and~\cite{duminilcopin2023phasetransitionvacantset,
goswami2025stronglocaluniquenessvacant} for the vacant set of random interlacements) and the fact that $\alpha_\ast^{\mathbb{Z}^d} > 0$ for the level sets of the GFF in~\cite{drewitz2018sign} (with $\alpha_\ast^{\mathbb{Z}^d}$ the percolation threshold for the GFF on $\mathbb{Z}^d$, $d \geq 3$). In fact, for each model, additional critical parameters $\overline{\alpha}^{\mathbb{Z}^d}$ and $\alpha_{\ast\ast}^{\mathbb{Z}^d}$ with the (a priori) property $\overline{\alpha}^{\mathbb{Z}^d} \leq {\alpha}_\ast^{\mathbb{Z}^d} \leq {\alpha}_{\ast\ast}^{\mathbb{Z}^d} $, or $\overline{u}^{\mathbb{Z}^d}$ and $u_{\ast\ast}^{\mathbb{Z}^d}$ with $ \overline{u}^{\mathbb{Z}^d} \leq {u}_\ast^{\mathbb{Z}^d} \leq {u}_{\ast\ast}^{\mathbb{Z}^d}$ have in the past been used to study various phenomena in the off-critical regimes of these models (with the sharpness of the phase transition corresponding to the series of equalities $ \overline{\alpha}^{\mathbb{Z}^d} = {\alpha}_\ast^{\mathbb{Z}^d} = {\alpha}_{\ast\ast}^{\mathbb{Z}^d}$ and $\overline{u}^{\mathbb{Z}^d} = {u}_\ast^{\mathbb{Z}^d} = {u}_{\ast\ast}^{\mathbb{Z}^d}$). One can introduce equivalent parameters $\overline{\alpha}^G, \alpha^G_{\ast\ast}$, $\overline{u}^G$, and $u^G_{\ast\ast}$ in a very general context of locally finite, connected, transient weighted graphs $G$ with some \textit{global} regularity conditions $(p_0)$, $(V_\alpha)$, $(G_\beta)$, and $(\mathrm{WSI})$ as in~\cite{drewitz2025} (see also~\cite{sznitman2012decoupling} for a similar set-up for graphs of the form $\widetilde{G} \times \mathbb{Z}$). One central result of~\cite{drewitz2025} is that under these general conditions, one has $\overline{\alpha}^G > 0$ (and, a fortiori, $\alpha^G_\ast > 0$, with hopefully obvious notation). \medskip

\noindent
We highlight that a typical realization of the percolation cluster $\mathcal{C}_\infty$ does \textit{not} fulfill the criteria $(V_\alpha)$, $(G_\beta)$, which essentially correspond to a global $\alpha$-Ahlfors regularity for the measure of balls and global controls on the Green function for the simple random walk on $G$ (see~\cite[Section 1]{drewitz2025} for the precise definitions). In fact, due to the global nature of the definitions of quantities such as $\overline{\alpha}^G, \alpha^G_{\ast\ast}$, $\overline{u}^G$, and $u^G_{\ast\ast}$ in~\cite{drewitz2025}, an adaptation to our set-up (and therefore the definition of corresponding strongly supercriticial or strongly subcritical regimes for $E^{\geq \alpha}_\omega$ and $\mathcal{V}^u_\omega$) is not entirely obvious, and would have to take into account local degeneracies of $\mathcal{C}_\infty$. We refer to Remark 3.7 (2) in~\cite{CN2021disconnection} for a similar discussion pertaining to the level-sets of the GFF on $\mathbb{Z}^d, d \geq 3$, with uniformly elliptic bond disorder.  Quite plausibly, the extraction methods in Section~\ref{sec:Quenched-Unif-Estimates} might be useful to obtain analogues of the parameters  $\overline{\alpha}^G, \alpha^G_{\ast\ast}$, $\overline{u}^G$, and $u^G_{\ast\ast}$ for a typical realization of $\mathcal{C}_\infty$. The methods of the present article also serve as a starting point to study the off-critical regimes of $E^{\geq \alpha}_\omega$ and $\mathcal{V}^\alpha_\omega$, in particular in combination with recently obtained capacity controls on percolation clusters in~\cite{chiarini2026solidification}, which we plan to address elsewhere.
    \end{enumerate}
\end{remark}

\noindent
\textbf{Acknowledgements.} MN was partially supported by Hong Kong RGC grants ECS 26301824 and GRF 16303825. While this work was written, AC was associated with INdAM (Istituto Nazionale di Alta Matematica “Francesco Severi”) and the group GNAMPA.

\bibliographystyle{plain}

\end{document}

%% file: commands.tex
\newcommand{\IND}{\mathbbm{1}}

\newcommand{\capa}{\mathrm{Cap}}

\newcommand{\var}{\mathrm{Var}}

\newcommand{\cC}{\ensuremath{\mathcal C}}
\newcommand{\cD}{\ensuremath{\mathcal D}}
\newcommand{\cE}{\ensuremath{\mathcal E}}
\newcommand{\cF}{\ensuremath{\mathcal F}}

\newcommand{\cI}{\ensuremath{\mathcal I}}

\newcommand{\cL}{\ensuremath{\mathcal L}}

\newcommand{\cN}{\ensuremath{\mathcal N}}
\newcommand{\cO}{\ensuremath{\mathcal O}}

\newcommand{\cR}{\ensuremath{\mathcal R}}

\newcommand{\cT}{\ensuremath{\mathcal T}}

\newcommand{\cV}{\ensuremath{\mathcal V}}

\newcommand{\cX}{\ensuremath{\mathcal X}}

\newcommand{\cZ}{\ensuremath{\mathcal Z}}
\newcommand{\bbE}{\ensuremath{\mathbb E}}
\newcommand{\bbF}{\ensuremath{\mathbb F}}

\newcommand{\bbL}{\ensuremath{\mathbb L}}

\newcommand{\bbN}{\ensuremath{\mathbb N}}

\newcommand{\bbP}{\ensuremath{\mathbb P}}
\newcommand{\bbQ}{\ensuremath{\mathbb Q}}
\newcommand{\bbR}{\ensuremath{\mathbb R}}

\newcommand{\bbZ}{\ensuremath{\mathbb Z}}
\newcommand{\frF}{\ensuremath{\mathfrak F}}

\newcommand{\frP}{\ensuremath{\mathfrak P}}

\newcommand{\bfA}{\ensuremath{\mathbf A}}
\newcommand{\bfB}{\ensuremath{\mathbf B}}

\newcommand{\bfF}{\ensuremath{\mathbf F}}

\newcommand{\bfH}{\ensuremath{\mathbf H}}

\newcommand{\bfK}{\ensuremath{\mathbf K}}
\newcommand{\bfL}{\ensuremath{\mathbf L}}

\newcommand{\bfP}{\ensuremath{\mathbf P}}

\newcommand{\bfZ}{\ensuremath{\mathbf Z}}